 \newtheorem{thm}{Theorem}[subsection]
 \newtheorem{cor}[thm]{Corollary}
 \newtheorem{lem}[thm]{Lemma}
 \newtheorem{prop}[thm]{Proposition}
 \theoremstyle{definition}
 \newtheorem{defn}[thm]{Definition}
 \theoremstyle{remark}
 \newtheorem{pro}[thm]{Problem}
 \newtheorem{con}[thm]{Conjecture}
\begin{document}
\setcounter{page}{1}
\begin{flushleft}
\end{flushleft}
\bigskip
\bigskip
\title[Jianxin Wei, Yujun Yang: On the Fibonacci $(p,r)$-cubes] {On the Fibonacci $(p,r)$-cubes}
\author[Jianxin Wei, Yujun Yang: On the Fibonacci $(p,r)$-cubes]{Jianxin Wei,$^1$ Yujun Yang$^2$}
\thanks{$^1$School of Mathematics and Statistics Science, Ludong University, Yantai, 264025 P.R. China
\\ \indent\,\,\,e-mail: wjx0426@ldu.edu.cn
\\ \indent $^2$School of Mathematics and Information Sciences, Yantai University, Yantai, 264005 P.R. China
\\ \indent\,\,\,e-mail: yangyj@yahoo.com
\\ \indent}

\begin{abstract}
In this paper,
first it is shown that the `Fibonacci $(p,r)$-cube'(denoted as $I\Gamma_{n}^{(p,r)}$) studied in many papers is a new topology for interconnection network different from the original one (denoted as $O\Gamma_{n}^{(p,r)}$) presented by Egiazarian and Astola.
Then some topological properties of $I\Gamma_{n}^{(p,r)}$ and $O\Gamma_{n}^{(p,r)}$ are studied,
including the recursive structure,
some distance invariants,
and the maximum and minimum degree of them.
Finally, several problems and conjectures on $I\Gamma_{n}^{(p,r)}$ and $O\Gamma_{n}^{(p,r)}$ are listed.

\bigskip
\noindent Keywords: Hypercube; Fibonacci cube; I-Fibonacci $(p,r)$-cube; O-Fibonacci $(p,r)$-cube

\end{abstract}
\maketitle

\smallskip
\section{Introduction}
Let $B = \{0,1\}$ and $\mathcal{B}_{n}=\{b_{1}b_{2}\ldots b_{n}|b_{i}\in B, i=1,\ldots, n\}$.
An element of $\mathcal{B}_{n}$ is called a binary word (or simply a word) of length $d$.
We use the product notation for words meaning concatenation,
for example, $1^{s}$ is the word $11\ldots1$ of length $s$.
A word $f$ is called a \emph{factor} of a word $\mu$ if $f$ appears as a sequence of $|f|$ consecutive coordinates of $\mu$.
A non-extendable sequence of contiguous equal digits in a word $f$ is called a \emph{block} of $f$,
for example, $1^{2}0^{3}1^{4}$ has three blocks.

The $n$-dimensional hypercube $Q_{n}$ is the graph defined on the vertex set $\mathcal{B}_{n}$,
two vertices $\alpha=a_{1}a_{2}\ldots a_{n}$,
$\beta=b_{1}b_{2}\ldots b_{n}\in \mathcal{B}_{n}$ being adjacent if $a_{i}$ $\neq$ $b_{i}$ holds for exactly one $i$ $\in$ $\{1,\ldots, n\}$.
Inspired by classical Fibonacci sequence of numbers,
Hsu \cite{Hsu} introduced the Fibonacci cube $\Gamma_{n}$ for $n \geq 1$.
Let $\mathcal{F}_{n}= \{ b_{1}b_{2} \ldots b_{n}|$ $ b_{i} \in B, b_{i}b_{i+1} = 0, i=1, \ldots, n-1\}$.
Then $\Gamma_{n}$ has $\mathcal{F}_{n}$ as the vertex set,
two vertices being adjacent if they differ in exactly one coordinate.
The cube $\Gamma_{5}$ is shown in Figure. 1.

\hspace{1mm}
\begin{center}
\includegraphics[scale=0.70]{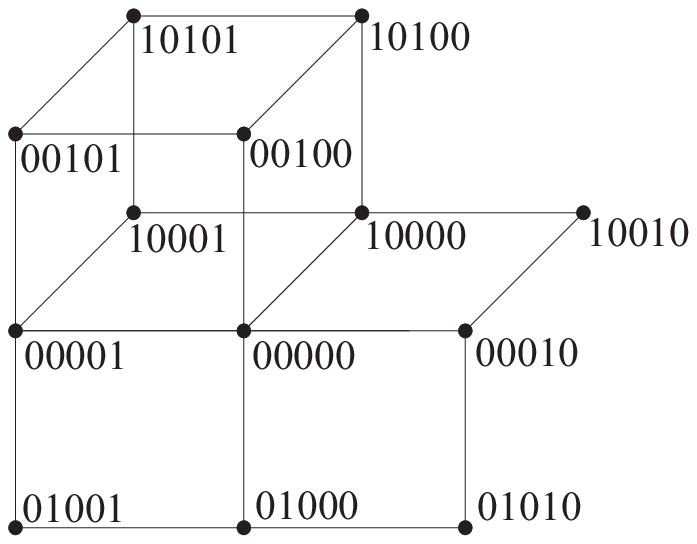}\\
{\footnotesize Figure 1. The 5-dimensional Fibonacci cube $\Gamma_{5}$.}
\end{center}
\hspace{3mm}

Some structural and enumerative properties of the Fibonacci cubes were given in \cite{MS}.
Fibonacci cubes have already found some applications in chemical graph theory.
For example,
it is showed that Fibonacci cubes are precisely the resonance graphs of zigzag hexagonal chains \cite{KZ}.
For more results on application and structure of  Fibonacci cubes, see \cite{K1} for a survey.

When Fibonacci cubes was introduced,
it soon became increasingly popular.
Numerous variants and generalizations of Fibonacci cubes
are proposed and investigated \cite{WY,IKR,KM,AV,JW,QW,W1,WYZ,AZS,HCD}.
In the present paper,
we pay a special attention to the Fibonacci $(p,r)$-cubes introduced by Egiazarian and Astola \cite{Egiazarian}.

The Fibonacci $(p,r)$-numbering system is defined by the following recurrence

\begin{align}\label{1.1}
\varphi_{(p,r)}(i)=\sum_{j=0}^{r}\varphi_{(p,r)}(i-pj-1), i>1,
\end{align}

\noindent
with initial values $\varphi_{(p,r)}(i)=0$ for $i<0$ and $\varphi_{(p,r)}(i)=1$ for $i=0$,
where $p\geq1$ and $r\geq1$.
Let $K$ be an positive integer such that $K<\varphi_{(p,r)}(n)$ for some $n>0$ .
Then the $O$-Fibonacci $(p,r)$-code of $K$ is defined uniquely as $b_{n-1}\ldots b_{p}$ such that $K=\sum_{i=p}^{n-1}b_{i}\varphi_{(p,r)}(i)$ and the following hold:

(1) if $b_{i}=1$ then $b_{i-1}=b_{i-2}=b_{i-p+1}=0$,
i.e. there is at least $(p-1)$ 0s between two 1s (which is called `consecutive' 1s) in $b_{n-1}\ldots b_{p}$;

(2) there are no more than $r$ `consecutive' 1s in $b_{n-1}\ldots b_{p}$.

Some examples of Fibonacci $(p,r)$-numbers are shown in Table 1.
{\renewcommand\baselinestretch{1.2}\selectfont
\begin{center} \fontsize{9.0pt}{10.5pt}\selectfont
\begin{longtable}{c|c|c|c|c|c|c|c|c|c|c|c|c|c}
\caption{Examples of systems of Fibonacci $(p,r)$-numbers.}\\
\hline
$i$&0&1&2&3&4&5&6&7&8&9&10&11&12\\
\hline
$\varphi_{(1,1)}(i)$&1&1&2&3&5&8&13&21&34&55&89&144&233\\
\hline
$\varphi_{(1,3)}(i)$&1&1&2&4&8&15&29&56&108&208&401&773&1490\\
\hline
$\varphi_{(2,1)}(i)$&1&1&1&2&3&4&6&9&13&19&28&41&60\\
\hline
$\varphi_{(2,2)}(i)$&1&1&1&2&3&5&8&12&19&30&47&85&116\\
\hline
\end{longtable}
\end{center}
\par}

\begin{defn}\label{defn1}\cite{Egiazarian}
\emph{For positive integers $n$, $p$ and $r$,
the Fibonacci $(p,r)$-cube of size $\varphi_{(p,r)}(n)$,
denoted by $G_{(p,r;n)}=(V,E)$,
where $V=\{0,1,\ldots,\varphi_{(p,r)}(n)-1\}$ and the edge $(i,j)\in E$ if and only if the Fibonacci $(p,r)$-codes of $i$ and $j$ differ exactly in one coordinate.}
\end{defn}

The graphs $G_{(1,3;4)}$, $G_{(2,1;8)}$ and $G_{(2,2;7)}$ are shown in Figure 2$(a),(b)$ and $(c)$ respectively,
where each vertex is labelled with the Fibonacci $(p,r)$-code and the corresponding number.
For example,
$10100(11)$ in graph $G_{(2,2;7)}$ means the vertex $11$ with its $O$-Fibonacci $(2,2)$-code $10100$.

Since the length of the $O$-Fibonacci $(p,r)$-code of the vertices of $G_{(p,r;n)}$ is $n-p$,
the graph $G_{(p,r;n)}$ is denoted as $O\Gamma_{n-p}^{(p,r)}$ for convenience,
where `$O$' means the `original' Fibonacci $(p,r)$-cube to distinguish the `imitative' one introduced following.
Note that the symbol $O\Gamma_{n-p}^{(p,r)}$ is more flexible in the definition of an iteration,
see Section 2.

In papers \cite{K1,KR,OZ,OZY,JZ},
the ``Fibonacci $(p,r)$-cube'' is studied from several different aspects.
Although it also was called ``Fibonacci $(p,r)$-cube'' and pointed out to be the cubes defined in \cite{Egiazarian},
we find that the cube in these papers is a new topological structure differing from the existing Fibonacci like-cubes.
We call those cubes the imitative Fibonacci $(p,r)$-cube and denoted as $I\Gamma_{n}^{(p,r)}$,
which is defined as follows.

Let $n,$ $p$ and $r$ be positive integers.
Then a $I$-Fibonacci $(p,r)$-code of length $n$ is a word of length $n$
in which there are at most $r$ consecutive 1s and at least $p$ element $0$s between two sub-words composed of (at most $r$) consecutive 1s.

\begin{defn}\label{defn2}\cite{OZY}
\emph{Let $\mathcal{F}_{n}^{(p,  r)}$ denote the set of $I$-Fibonacci $(p,r)$-codes of length $n$.
Then $I\Gamma_{n}^{(p,r)}=(V,E)$ is the graph such that $V=\mathcal{F}_{n}^{(p,r)}$ and two vertices are adjacent if they differ in exactly one coordinate.}
\end{defn}

For example
let $(p,r)=(2,2)$.
Then $I\Gamma_{5}^{(2,2)}$ is the cube shown in Figure 2$(d)$.
We think the main difference between the definitions of $O\Gamma_{n}^{(p,r)}$ and $I\Gamma_{n}^{(p,r)}$ is the meaning of the `$r$ consecutive 1s'.
In the codes of the vertex of $O\Gamma_{n}^{(p,r)}$ it means that $(10^{p-1})^{r}$,
but in $I\Gamma_{n}^{(p,r)}$ it means that $1^{r}$.
Hence, we claim that $O\Gamma_{n}^{(p,r)}$ and $I\Gamma_{n}^{(p,r)}$ are not the same one.
The following study in the present paper also confirmed this point.

\hspace{1mm}
\begin{center}
\includegraphics[scale=0.75]{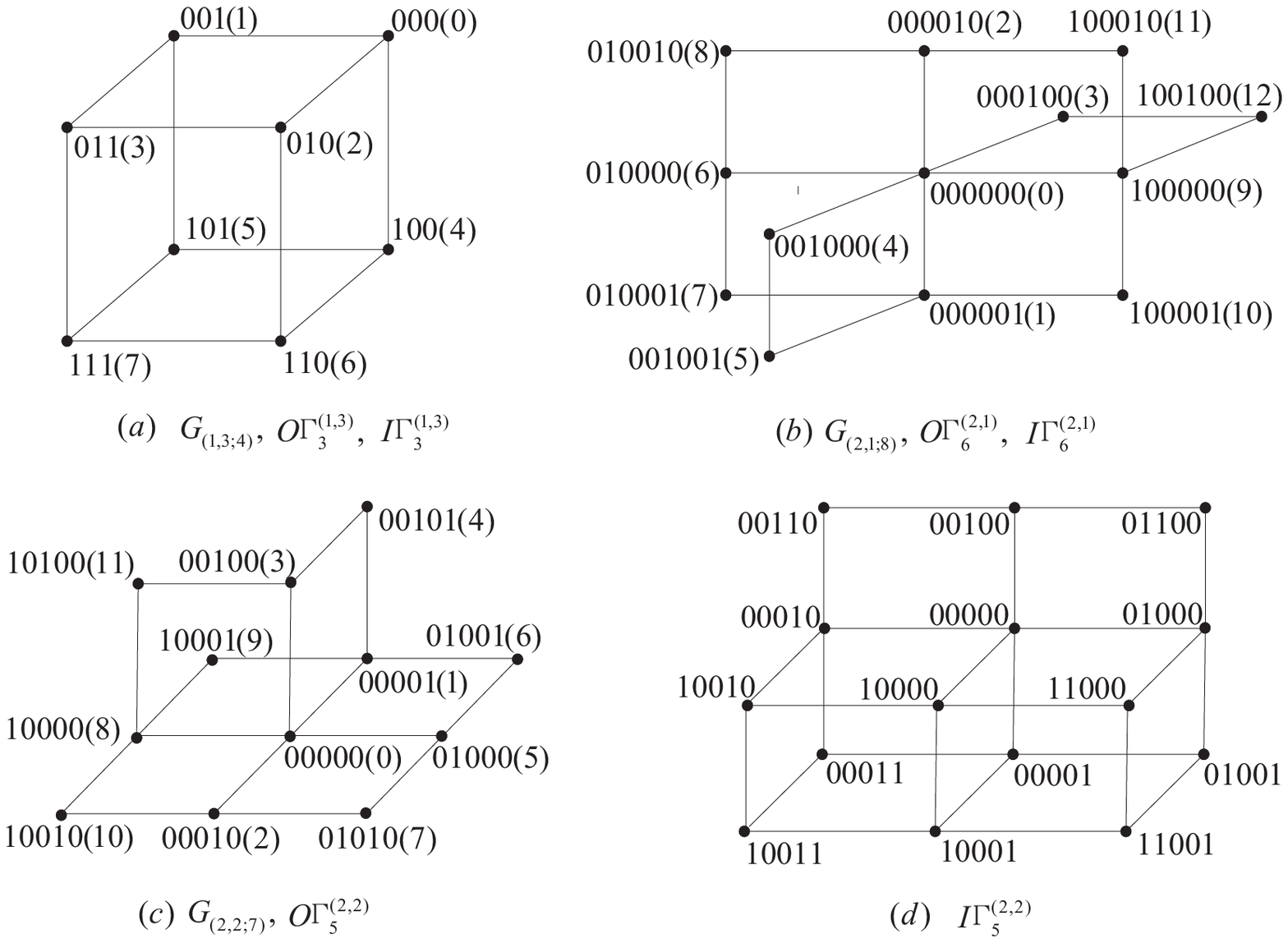}\\
{\footnotesize Figure 2. Some examples of $G_{(p,r;n)}$, $O\Gamma_{n}^{(p,r)}$ and $I\Gamma_{n}^{(p,r)}$.}
\end{center}
\hspace{3mm}

The rest of paper is organized as follows.
In the next section the recursive structure of $O\Gamma_{n}^{(p,r)}$ and $I\Gamma_{n}^{(p,r)}$ are studied,
and some preliminary observations and properties are given.
Then, in Section 3,
some distance invariants of $O\Gamma_{n}^{(p,r)}$ and $I\Gamma_{n}^{(p,r)}$ are studied,
including the diameter, the radius and the center of these cubes.
In Section 4,
the maximum and minimum degree of $O\Gamma_{n}^{(p,r)}$ and $I\Gamma_{n}^{(p,r)}$ are determined.
All the results obtained in these sections further shown that $O\Gamma_{n}^{(p,r)}$ and $I\Gamma_{n}^{(p,r)}$ are not the same cubes in general.
In the last section,
several problems and conjectures for further research on these two cubes are listed.

\smallskip
\section{Recursive structure}

In this section,
the recursive structure of the vertex set and the edge set of $O\Gamma_{n}^{(p,r)}$ and $I\Gamma_{n}^{(p,r)}$ are given,
respectively.
Based on these structure,
some general properties of these cubes are shown,
and the relationships between $O\Gamma_{n}^{(p,r)}$ and $I\Gamma_{n}^{(p,r)}$ is further revealed.

It is useful to define $V(Q_{0})=V(\Gamma_{0})=V(O\Gamma_{0}^{(p,r)})=V(I\Gamma_{0}^{(p,r)})=\{\lambda\}$,
where $\lambda$ denotes the empty word.
Let $b$ be a word and $\mathcal{A}\subseteq \mathcal{B}_{n}$.
Then $b\mathcal{A}$ is the set of word obtained from $\mathcal{A}$ by appending $b$ in front of each of the terms of $\mathcal{A}$,
that is,
$b\mathcal{A}=\{ba|a\in\mathcal{A}\}$.

\subsection{Recursive structure of the vertex set}
It is well known that the vertex set $\mathcal{B}_{n}$ of hypercube $Q_{n}$ satisfies with

\begin{align}\label{2.1}
\mathcal{B}_{n}=0\mathcal{B}_{n-1}\cup 1\mathcal{B}_{n-1}\quad, n\geq1,
\quad and \quad \mathcal{B}_{0}=\{\lambda\}
\end{align}

\noindent
So the order of $Q_{n}$ is $2^{n}$ for $n\geq0$ by Eq. (\ref{2.1}).
The vertex set $\mathcal{F}_{n}$ of Fibonacci cube $\Gamma_{n}$ satisfies with

\begin{align}\label{2.2}
\mathcal{F}_{n}=0\mathcal{F}_{n-1}\cup 10\mathcal{F}_{n-2}, n\geq2,
\quad and \quad \mathcal{F}_{0}=\{\lambda\}, \mathcal{F}_{1}=\{0,1\}.
\end{align}

\noindent
By Eq. (\ref{2.2}),
the order of Fibonacci cube $\Gamma_{n}$ satisfies that $|\mathcal{F}_{n}|=|\mathcal{F}_{n-1}|+|\mathcal{F}_{n-2}|$ for $n\geq2$ with $|\mathcal{F}_{0}|=1$, $|\mathcal{F}_{1}|=2$.
Let $p\geq1, r\geq1$ and $n\geq1$.
Then it \cite{Egiazarian} is shown that the vertex set of $O\Gamma_{n}^{(p,r)}$ can be defined recursively by

\begin{align}\label{2.3}
V(O\Gamma_{n}^{(p,r)})=0V(O\Gamma_{n-1}^{(p,r)})\cup 10^{p-1}0V(O\Gamma_{n-p-1}^{(p,r)})\cup\ldots \cup (10^{p-1})^{r}0V(O\Gamma_{n-pr-1}^{(p,r)}),
\end{align}

\noindent
with the initial conditions $V(O\Gamma_{n}^{(p,r)})=\emptyset$ for $n<0$,
$V(O\Gamma_{n}^{(p,r)})=\{\lambda\}$ for $n=0$.
According to Eq. (\ref{2.3}),
the following result holds.

\begin{prop}\label{prop2.1}
Let $n$ and $p$ be given integers.
Then $O\Gamma_{n}^{(p,r)}\cong O\Gamma_{n}^{(p,r')}$ for all $r$ and $r'$ such that $n\leq pr$ and $n\leq pr'$.
\end{prop}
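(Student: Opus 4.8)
The plan is to establish the stronger assertion that $V(O\Gamma_{n}^{(p,r)})=V(O\Gamma_{n}^{(p,r')})$ as subsets of $\mathcal{B}_{n}$ whenever $n\leq pr$ and $n\leq pr'$. By Definition~\ref{defn1}, each of $O\Gamma_{n}^{(p,r)}$ and $O\Gamma_{n}^{(p,r')}$ is the subgraph of $Q_{n}$ induced by its own vertex set, so once the vertex sets are shown to be equal the two cubes are literally the same graph, and in particular $O\Gamma_{n}^{(p,r)}\cong O\Gamma_{n}^{(p,r')}$. I would prove the vertex-set equality by induction on $n$, using the recursion (\ref{2.3}).

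For the base, if $n\leq 0$ both vertex sets equal $\{\lambda\}$ (for $n=0$) or $\emptyset$ (for $n<0$) by the initial conditions attached to (\ref{2.3}). For the inductive step, assume the equality for all smaller values of $n$ and, without loss of generality, that $r\leq r'$. By (\ref{2.3}),
\[
V(O\Gamma_{n}^{(p,r)})=\bigcup_{k=0}^{r}(10^{p-1})^{k}0\,V(O\Gamma_{n-pk-1}^{(p,r)})
\]
and
\[
V(O\Gamma_{n}^{(p,r')})=\bigcup_{k=0}^{r'}(10^{p-1})^{k}0\,V(O\Gamma_{n-pk-1}^{(p,r')}).
\]
The two reductions I need are: (i) every term of the second union with $r+1\leq k\leq r'$ is empty, since $n\leq pr$ and $p\geq 1$ give $n-pk-1\leq n-p(r+1)-1\leq -p-1<0$, whence $V(O\Gamma_{n-pk-1}^{(p,r')})=\emptyset$; thus both unions effectively run over $k=0,\dots,r$. (ii) For each fixed $k$ with $0\leq k\leq r$, either $n-pk-1<0$, in which case the $k$-th term is empty on both sides, or $0\leq n-pk-1<n$ with $n-pk-1<n\leq pr$ and $n-pk-1<n\leq pr'$, so the induction hypothesis yields $V(O\Gamma_{n-pk-1}^{(p,r)})=V(O\Gamma_{n-pk-1}^{(p,r')})$, and the $k$-th terms coincide after prefixing the common word $(10^{p-1})^{k}0$. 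Combining (i) and (ii) gives $V(O\Gamma_{n}^{(p,r)})=V(O\Gamma_{n}^{(p,r')})$, closing the induction.

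The only point that requires care is the bookkeeping in reduction (i): one must check that the hypothesis $n\leq pr$ forces not merely the last term $(10^{p-1})^{r}0\,V(O\Gamma_{n-pr-1}^{(p,r)})$, but in fact every term of index $k$ with $pk\geq n$, to vanish, so that for $n\geq 1$ each recursion collapses to the single truncated union $\bigcup_{k=0}^{K}(10^{p-1})^{k}0\,V(O\Gamma_{n-pk-1})$ with $K=\lceil n/p\rceil-1<r\leq r'$, the same for both parameters; everything else is routine. A parallel, induction-free argument is also available: $V(O\Gamma_{n}^{(p,r)})$ consists exactly of the words of length $n$ obeying conditions (1) and (2) in the definition of the $O$-Fibonacci $(p,r)$-code, and when $n\leq pr$ no word of length $n$ can contain $r+1$ `consecutive' $1$s (such a block already occupies $pr+1>n$ coordinates), so condition (2) is vacuous and the vertex set is independent of $r$; this again shows the two cubes are equal.
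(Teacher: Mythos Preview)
Your proposal is correct and follows exactly the approach the paper indicates: the paper does not write out a proof of Proposition~\ref{prop2.1} at all, but simply asserts that it follows from Eq.~(\ref{2.3}), which is precisely the recursion you unfold inductively. Your induction-free argument via the word description is also valid and would equally well be accepted as the intended justification.
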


By Eq. (\ref{2.3}),
the iterative formula of the order of $O\Gamma_{n}^{(p,r)}$, consist with Eq. (\ref{1.1}), is

\begin{align}\label{2.4}
|V(O\Gamma_{n}^{(p,r)})|=|V(O\Gamma_{n-1}^{(p,r)})|+|V(O\Gamma_{n-p-1}^{(p,r)})|+\ldots+|V(O\Gamma_{n-pr-1}^{(p,r)})|.
\end{align}

By the definition of the cube $I\Gamma_{n}^{(p,r)}$,
the following property holds obviously.

\begin{prop}\label{prop2.2}
Let $n$ and $p$ be given integers.
Then the vertex set of $I\Gamma_{n}^{(p,r)}$ satisfies with

\begin{align}\label{2.5}
V(I\Gamma_{n}^{(p,r)})=0V(I\Gamma_{n-1}^{(p,r)})\cup 10^{p}V(I\Gamma_{n-p-1}^{(p,r)})\cup\ldots \cup 1^{r}0^{p}V(I\Gamma_{n-p-r}^{(p,r)}),
\end{align}

\noindent
with the initial conditions $V(I\Gamma_{n}^{(p,r)})=\emptyset$ for $n<0$,
$V(I\Gamma_{n}^{(p,r)})=\{\lambda\}$ for $n=0$.
\end{prop}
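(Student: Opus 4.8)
\smallskip
\noindent\emph{Proof plan.} Since $V(I\Gamma_{n}^{(p,r)})=\mathcal{F}_{n}^{(p,r)}$ by Definition~\ref{defn2}, it is enough to establish the stated set identity for $\mathcal{F}_{n}^{(p,r)}$, and the plan is to classify a word $w\in\mathcal{F}_{n}^{(p,r)}$ according to the length $k$ of the maximal block of $1$s with which it begins --- exactly the scheme by which Eq.~(\ref{2.3}) is obtained for $O\Gamma_{n}^{(p,r)}$, the only difference being that here an initial run of $j$ consecutive $1$s is the word $1^{j}$ rather than $(10^{p-1})^{j}$. The cases $n\le 0$ are the stated initial conditions, and I would handle the finitely many small values of $n$ for which some summand on the right collapses as additional base cases, checked directly against the definition.

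For $w\in\mathcal{F}_{n}^{(p,r)}$ with $n$ past that range, the ``at most $r$ consecutive $1$s'' condition forces $0\le k\le r$. If $k=0$ then $w=0w'$ with $w'\in\mathcal{F}_{n-1}^{(p,r)}$, because removing a leading $0$ creates no new block of $1$s and shortens no gap between two such blocks, so $w\in 0V(I\Gamma_{n-1}^{(p,r)})$. If $1\le k\le r$, then position $k+1$ of $w$ is $0$ by maximality of the block, and any $1$ occurring further to the right must be preceded by at least $p$ zeros, whence positions $k+1,\dots,k+p$ are all $0$; thus $w=1^{k}0^{p}v$ with $v$ of length $n-p-k$, and $v\in\mathcal{F}_{n-p-k}^{(p,r)}$ by the same reasoning, so $w\in 1^{k}0^{p}V(I\Gamma_{n-p-k}^{(p,r)})$. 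The opposite inclusion is the easy direction: prepending $0$, or prepending $1^{k}0^{p}$ with $1\le k\le r$, to a code in $\mathcal{F}_{n-1}^{(p,r)}$, respectively $\mathcal{F}_{n-p-k}^{(p,r)}$, again satisfies both defining conditions, since the new leading block $1^{k}$ has length $\le r$ and the $0^{p}$ supplies the required $p$ zeros before any block of $1$s further to the right (and nothing lies to its left). The union on the right is disjoint because the length of the maximal leading block of $1$s is $0$ on the first summand and exactly $k$ on the summand indexed by $k$.

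I expect the only delicate part to be the boundary bookkeeping: the peeling $w=1^{k}0^{p}v$ makes sense only once $n$ is large enough for $1^{k}0^{p}$ not to overrun the word, which is why the small values of $n$ have to be inspected by hand; everything else is the routine ``split and concatenate'' lemma for these codes. From the identity one reads off at once the order recurrence
\[
|V(I\Gamma_{n}^{(p,r)})|=|V(I\Gamma_{n-1}^{(p,r)})|+|V(I\Gamma_{n-p-1}^{(p,r)})|+\cdots+|V(I\Gamma_{n-p-r}^{(p,r)})|,
\]
which in general differs from Eq.~(\ref{2.4}) for $O\Gamma_{n}^{(p,r)}$, in line with the paper's thesis that the two families of cubes are distinct.
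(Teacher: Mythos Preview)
Your classification by the length of the leading block of $1$s is exactly the argument the paper has in mind; the paper gives no proof beyond the sentence preceding the proposition that it ``holds obviously'' from the definition of an $I$-Fibonacci $(p,r)$-code. One word of caution on the boundary bookkeeping you already flag: for $1\le n<p+r$ the identity as literally stated (with only the initial values $\emptyset$ for $n<0$ and $\{\lambda\}$ for $n=0$) does \emph{not} hold --- e.g.\ $1\in\mathcal{F}_{1}^{(p,r)}$ is not produced by the right-hand side since $V(I\Gamma_{-p}^{(p,r)})=\emptyset$ --- so your ``additional base cases'' must be read as genuinely additional initial data rather than instances of the recursion, just as $\mathcal{F}_{1}$ is listed separately in Eq.~(\ref{2.2}); the paper is simply silent on this point.
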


By Proposition \ref{prop2.2},
the iterative formula of the order of $I\Gamma_{n}^{(p,r)}$ satisfies:

\begin{align}\label{2.6}
|V(I\Gamma_{n}^{(p,r)})|=|V(I\Gamma_{n-1}^{(p,r))}|+|V(I\Gamma_{n-p-1}^{(p,r)})|+\ldots+|V(I\Gamma_{n-p-r}^{(p,r)})|.
\end{align}

Postal Network \cite{WY} $PN_{p}(n)$ of dimension $n$ with a parameter $p\geq1$ is a subgraph of $Q_{n}$ induced on vertices

\begin{align}\label{2.7}
PN_{p}(n)=0PN_{p}(n-1)\cup 10^{p-1}PN_{p}(n-p) \quad &if\quad n>p,
\end{align}

\noindent
and $PN_{p}(n)=\{\nu\in Q_{n}|w(\nu)\leq1\}$ if $n\leq p$,
where $w(\nu)$ is the number of $1$s in $\nu$.

Eqs. (\ref{2.3}-\ref{2.6}) show that the vertex sets of $O\Gamma_{n}^{(p,r)}$ and $I\Gamma_{n}^{(p,r)}$ are different in general,
but they are the same one for some special cases.
By Eqs. (\ref{2.1}-\ref{2.7}),
it is easy to see the following holds.

\begin{prop}\label{prop2.3}
Let $p\geq1$ and $n\geq1$.
Then $O\Gamma_{n}^{(1,n)}\cong I\Gamma_{n}^{(1,n)}\cong Q_{n}$,
$O\Gamma_{n}^{(1,1)}\cong I\Gamma_{n}^{(1,1)}\cong \Gamma_{n}$, and
$O\Gamma_{n}^{(p,1)}\cong I\Gamma_{n}^{(p,1)}\cong PN_{p+1}(n)$.
\end{prop}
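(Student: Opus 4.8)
The guiding observation is that every graph named in the statement is, by its very definition, a subgraph of the hypercube $Q_{n}$ induced on a subset of $\mathcal{B}_{n}$: in $Q_{n}$, $\Gamma_{n}$, $PN_{p+1}(n)$, $O\Gamma_{n}^{(p,r)}$ and $I\Gamma_{n}^{(p,r)}$ alike, two vertices are joined exactly when they differ in one coordinate. Hence two of these graphs built over $\mathcal{B}_{n}$ are not merely isomorphic but literally equal as soon as they have the same vertex set, and the proposition reduces to the three identities of subsets of $\mathcal{B}_{n}$: $V(O\Gamma_{n}^{(1,n)})=V(I\Gamma_{n}^{(1,n)})=\mathcal{B}_{n}$; then $V(O\Gamma_{n}^{(1,1)})=V(I\Gamma_{n}^{(1,1)})=\mathcal{F}_{n}$; and finally $V(O\Gamma_{n}^{(p,1)})=V(I\Gamma_{n}^{(p,1)})=V(PN_{p+1}(n))$.

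To establish these I would first record the combinatorial content of each set. Unwinding the recursion (\ref{2.7}) by induction on $n$ --- the base range $n\le p+1$ being the words of weight at most one --- shows that $V(PN_{p+1}(n))$ is exactly the set of length-$n$ words containing no factor $10^{i}1$ with $0\le i\le p-1$. On the $O$-side: when $p=1$ the first defining condition of the $O$-code is vacuous and a maximal block of ``consecutive'' $1$s is an ordinary run of $1$s, so for $r=1$ the code set is the set of words with no factor $11$, i.e.\ $\mathcal{F}_{n}$, while for $r=n$ both conditions are vacuous and the code set is all of $\mathcal{B}_{n}$; for general $p$ and $r=1$, the first condition forbids the factors $10^{i}1$ with $i<p-1$ and ``no more than one consecutive $1$'' forbids the remaining factor $10^{p-1}1$, so the code set is precisely the set of words avoiding $10^{i}1$ for $i\le p-1$. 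Reading Definition~\ref{defn2} in the same way yields the matching three descriptions for $I\Gamma_{n}^{(p,r)}$: ``at most $r$ consecutive $1$s'' is void when $r=n$ and forbids the run $11$ when $r=1$, and for $r=1$ the requirement of ``at least $p$ zeros between $1$-blocks'' is exactly avoidance of $10^{i}1$ with $i\le p-1$. Matching these descriptions up, the three set-identities hold, and the isomorphisms follow from the opening remark.

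A second route argues purely from the recursions: putting $p=r=1$ in either (\ref{2.3}) or (\ref{2.5}) collapses it to the Fibonacci recursion (\ref{2.2}); putting $r=1$ in either collapses it to the Postal Network recursion (\ref{2.7}), rewritten as $PN_{p+1}(n)=0\,PN_{p+1}(n-1)\cup 10^{p}\,PN_{p+1}(n-p-1)$; and for the first pair one invokes Proposition~\ref{prop2.1}, together with its evident counterpart for $I\Gamma_{n}^{(p,r)}$ (valid because the constraint ``at most $r$ consecutive $1$s'' is void once $r\ge n$), to replace the varying parameter $r=n$ by a fixed large $r$, after which the recursions and (\ref{2.1}) identify the vertex set with $\mathcal{B}_{n}$. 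Either way, the single point that demands attention --- and the one I expect to be the main obstacle rather than any structural difficulty --- is the low-dimensional boundary: just as (\ref{2.2}) carries the extra datum $\mathcal{F}_{1}=\{0,1\}$, the recursions (\ref{2.3}), (\ref{2.5}), (\ref{2.7}) must be read with their correct small-$n$ values (the weight-$\le 1$ words for $n\le p$), and it is exactly the freedom to keep $r$ fixed, granted by Proposition~\ref{prop2.1}, that makes the induction in the first isomorphism legitimate.
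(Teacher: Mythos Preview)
Your proposal is correct, and your second route --- collapsing the recursions (\ref{2.3}) and (\ref{2.5}) under the given parameter choices and matching them against (\ref{2.1}), (\ref{2.2}), (\ref{2.7}) --- is exactly the paper's approach: the paper gives no proof beyond the sentence ``By Eqs.~(\ref{2.1}--\ref{2.7}), it is easy to see the following holds.'' Your first route, via forbidden-factor descriptions of each vertex set, is a genuine alternative that the paper does not spell out here (though it later records such descriptions in Proposition~\ref{prop2.5}); it has the advantage of being self-contained and of avoiding the bookkeeping around initial conditions that you correctly flag as the one delicate point in the recursive argument.
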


By Proposition \ref{prop2.3},
both $O\Gamma_{n}^{(p,r)}$ and $ I\Gamma_{n}^{(p,r)}$ unify a wide range of topologies such as hypercube,
classical Fibonacci cube and postal network.

By Eqs. (\ref{2.4}) and (\ref{2.6}),
the speed growth of the order of $V(O\Gamma_{n}^{(p,r)})$ is far greater than $V(I\Gamma_{n}^{(p,r)})$,
although $|V(O\Gamma_{n}^{(p,r)})|$ and $|V(I\Gamma_{n}^{(p,r)})|$ have the same initial conditions.
The following proposition is also obvious.

\begin{prop}\label{prop2.4}
Suppose $p>1$ and $r>1$.
Then $|V(O\Gamma_{n}^{(p,r)})|=|V(I\Gamma_{n}^{(p,r)})|$ for $n\leq p+1$,
and $|V(O\Gamma_{n}^{(p,r)})|<|V(I\Gamma_{n}^{(p,r)})|$ for $n> p+1$.
\end{prop}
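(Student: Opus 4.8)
Throughout write $o_{n}:=|V(O\Gamma_{n}^{(p,r)})|$ and $i_{n}:=|V(I\Gamma_{n}^{(p,r)})|$, so that $o_{n}=i_{n}=0$ for $n<0$, $o_{0}=i_{0}=1$, and, by Eqs.~(\ref{2.4}) and~(\ref{2.6}), for $n\geq 1$
\begin{align*}
o_{n}=o_{n-1}+\sum_{k=1}^{r}o_{n-pk-1},\qquad i_{n}=i_{n-1}+\sum_{k=1}^{r}i_{n-p-k}.
\end{align*}
The whole argument is an exercise on these two recurrences; no graph-theoretic input beyond the definitions is needed. The plan is: (i) settle the range $n\leq p+1$ by direct evaluation; (ii) show $o_{n}\leq i_{n}$ for all $n$; (iii) upgrade (ii) to a strict inequality for $n\geq p+2$.

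For (i): the cases $n<0$ and $n=0$ hold by definition. For $1\leq n\leq p$ every lookback index $n-pk-1$ and $n-p-k$ with $k\geq 1$ is at most $n-p-1\leq -1$, so both recurrences collapse to $o_{n}=o_{n-1}$ and $i_{n}=i_{n-1}$, whence $o_{n}=i_{n}=1$ by induction. For $n=p+1$ only the $k=1$ summand survives in each recurrence (the $k=2$ indices are $-p$ and $-1$, and everything with $k\geq 2$ is even smaller), so $o_{p+1}=o_{p}+o_{0}=2=i_{p}+i_{0}=i_{p+1}$. This part uses only $p,r\geq 1$.

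For (ii) I would first note that $(o_{n})_{n\geq 0}$ and $(i_{n})_{n\geq 0}$ are non-decreasing, which is immediate since every summand in the recurrences is non-negative. The comparison $o_{n}\leq i_{n}$ then follows by strong induction on $n$, matching the $k$-th summand of the $O$-recurrence against the $k$-th summand of the $I$-recurrence: the elementary inequality $n-pk-1\leq n-p-k$, equivalent to $(p-1)(k-1)\geq 0$ and hence always true, gives, via monotonicity and the inductive hypothesis, $o_{n-pk-1}\leq o_{n-p-k}\leq i_{n-p-k}$; adding these to $o_{n-1}\leq i_{n-1}$ yields $o_{n}\leq i_{n}$.

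Finally (iii): induct on $n\geq p+2$. In the base case $n=p+2$ the $O$-recurrence still loses all summands with $k\geq 2$ (the index $1-p$ is negative because $p>1$), so $o_{p+2}=o_{p+1}+o_{1}=3$; but because $r>1$ the $k=2$ summand $i_{0}$ of the $I$-recurrence is present, giving $i_{p+2}=i_{p+1}+i_{1}+i_{0}=4>3$. For $n\geq p+3$ the inductive hypothesis gives $o_{n-1}<i_{n-1}$, and feeding this strict inequality into the summand-by-summand estimate of step (ii) gives $o_{n}<i_{n}$. The hypotheses $p>1$ and $r>1$ are both indispensable, since if either fails the two recurrences become literally the same one, in accordance with Proposition~\ref{prop2.3}. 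The one point that genuinely needs care is that the strict inequality cannot be \emph{propagated} into $n=p+2$ from $n=p+1$ (where $o_{p+1}=i_{p+1}$): the gap must be \emph{created} there, and that is exactly where $r>1$ does its work, activating the $k=2$ term of the $I$-recurrence while the matching term of the $O$-recurrence has already fallen off the negative-index end. Beyond this, the argument is just careful bookkeeping of which summands vanish.
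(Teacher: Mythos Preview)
Your argument is correct and is precisely the kind of recurrence comparison the paper has in mind: the paper offers no proof beyond the sentence ``The following proposition is also obvious'' immediately after displaying Eqs.~(\ref{2.4}) and~(\ref{2.6}), so your write-up simply supplies the details the authors omitted. The termwise matching via $n-pk-1\leq n-p-k$ together with monotonicity, and the observation that the strict gap is created at $n=p+2$ by the $k=2$ term of the $I$-recurrence (present because $r>1$) versus the vanishing $k=2$ term of the $O$-recurrence (index $1-p<0$ because $p>1$), are exactly the points that make the claim ``obvious'' once one stares at the two recursions side by side.
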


Many Fibonacci like-cubes can be obtained from hypercube by some word forbidden to appear in the codes of hypercube.
Such as Fibonacci cubes, Lucas cubes, and Daisy cubes \cite{KM}.
From the point of view,
the following property holds:

\begin{prop}\label{prop2.5}
If $p=1$,
then both $O\Gamma_{n}^{(p,r)}$ and $I\Gamma_{n}^{(p,r)}$ can be obtained from $Q_{n}$ by removing all vertices that contain the word $1^{r+1}$ as factor.
If $p\geq2$, then $I\Gamma_{n}^{(p,r)}$ can be obtained from $Q_{n}$ by removing all vertices that contain the words
$1^{r+1}$ or $10^{s}1$ for all $s\leq p-1$;
and the cube $O\Gamma_{n}^{(p,r)}$ can be obtained from $Q_{n}$ by removing all vertices that contain the words
$(10^{p-1})^{r}1$ or $10^{s}1$ for all $s\leq p-2$.
\end{prop}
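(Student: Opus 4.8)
The plan is to exploit that $Q_n$, $O\Gamma_n^{(p,r)}$ and $I\Gamma_n^{(p,r)}$ all have binary words of length $n$ as vertices, with two vertices adjacent exactly when they differ in one coordinate; thus $O\Gamma_n^{(p,r)}$ and $I\Gamma_n^{(p,r)}$ are \emph{induced} subgraphs of $Q_n$. Hence ``$G$ is obtained from $Q_n$ by deleting every vertex containing some word of a list $\mathcal{L}$ as a factor'' is equivalent to the single identity $V(G)=\{w\in\mathcal{B}_n:\ w\text{ has no factor in }\mathcal{L}\}$, and the whole statement reduces to describing $V(O\Gamma_n^{(p,r)})$ and $V(I\Gamma_n^{(p,r)})$ by forbidden factors. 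I would obtain these descriptions by translating the defining word conditions directly.

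For $I\Gamma_n^{(p,r)}$, recall from Definition \ref{defn2} that $V(I\Gamma_n^{(p,r)})=\mathcal{F}_n^{(p,r)}$ consists of the length-$n$ words with at most $r$ consecutive $1$s in which any two distinct blocks of $1$s are separated by at least $p$ zeros. The first condition is plainly equivalent to avoiding the factor $1^{r+1}$. For the second, the key remark is that a factor $10^{s}1$ with $1\le s\le p-1$ occurs in a word $w$ if and only if $w$ has two blocks of $1$s separated by a block of exactly $s\le p-1$ zeros — the flanking $1$s force the intervening $0$-run to be a maximal block of length $s$ — so the second condition is equivalent to avoiding $10^{s}1$ for all $1\le s\le p-1$. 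When $p=1$ this second condition is automatic, since by maximality distinct $1$-blocks are already separated by at least one $0$, leaving only $1^{r+1}$ forbidden; when $p\ge2$ both families are genuinely needed. This yields the stated description of $I\Gamma_n^{(p,r)}$.

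For $O\Gamma_n^{(p,r)}$, recall from Definition \ref{defn1} and the discussion of (\ref{2.3}) that $V(O\Gamma_n^{(p,r)})$ is precisely the set of length-$n$ words satisfying conditions (1) and (2). Condition (1) — every $1$ is followed by at least $p-1$ zeros, except possibly near the end of the word — is equivalent to saying that any two $1$s are at distance at least $p$, which in turn is equivalent to avoiding every factor $10^{s}1$ with $0\le s\le p-2$; note that $1$s near the right end create no such factor and are unconstrained by (1), so the equivalence is exact. Condition (2) concerns ``consecutive'' $1$s, i.e.\ pairs of $1$s separated by exactly $p-1$ zeros: a maximal chain of pairwise-consecutive $1$s has the form $(10^{p-1})^{k-1}1$, and (2) forbids $k\ge r+1$, which is equivalent to avoiding the factor $(10^{p-1})^{r}1$. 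Combining, for $p\ge2$ the vertex set of $O\Gamma_n^{(p,r)}$ is exactly the set of words avoiding $(10^{p-1})^{r}1$ and $10^{s}1$ for $0\le s\le p-2$; for $p=1$ condition (1) is vacuous and $(10^{p-1})^{r}1=1^{r+1}$, recovering the $p=1$ assertion (and, in passing, re-confirming $O\Gamma_n^{(1,r)}\cong I\Gamma_n^{(1,r)}$).

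I expect the only real work to lie in the ``exactness'' of these translations: one must check both directions of every equivalence and, in particular, that an occurrence of $10^{s}1$ genuinely pins down a maximal $0$-block of length $s$ between two $1$-blocks (so that a forbidden short gap cannot hide inside a longer $0$-run), that an occurrence of $(10^{p-1})^{r}1$ genuinely exhibits $r+1$ consecutive $1$s even when embedded in a longer chain, and that the right-end boundary behaviour of condition (1) matches the absence of forbidden factors there. As an independent check one can instead induct on $n$ from the recursions (\ref{2.3}) and (\ref{2.5}) (handling small $n$ directly), showing that prefixing the allowed blocks $0,10^{p-1}0,\dots$ (resp.\ $0,10^{p},\dots$) to factor-avoiding words of smaller length produces exactly the factor-avoiding words of length $n$; the induction step reduces to the same bookkeeping, so the two routes confirm each other.
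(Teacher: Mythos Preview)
The paper does not supply a proof of this proposition at all: it is stated immediately after the sentence ``From the point of view, the following property holds:'' and is treated as self-evident from Definitions~\ref{defn1} and~\ref{defn2}. Your proposal is therefore not competing with an existing argument but filling in the details the authors chose to omit, and it does so correctly; the translation of the $I$-conditions into the forbidden factors $1^{r+1}$ and $10^{s}1$ ($1\le s\le p-1$), and of the $O$-conditions into $(10^{p-1})^{r}1$ and $10^{s}1$ ($0\le s\le p-2$), is exactly the intended content, including the observation that both families collapse to $1^{r+1}$ when $p=1$.

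One small remark: your proposed alternative route via the recursions (\ref{2.3}) and (\ref{2.5}) is overkill here and, for $O\Gamma_n^{(p,r)}$, slightly delicate at the right boundary (the recursion in (\ref{2.3}) prefixes $(10^{p-1})^{t}0$, which forces an extra $0$ after the last ``consecutive'' $1$, whereas a valid $O$-code of length $n$ may end in $1$ or $10^{j}$ with $j<p-1$); the direct translation you give first is cleaner and already complete, so I would simply drop the inductive paragraph rather than patch it.
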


Let $\upsilon=v_{1}v_{2}\ldots v_{n}$.
Then $\upsilon^{R}=v_{n}\ldots v_{2}v_{1}$ is called the reverse word of $\upsilon$.
By Proposition \ref{prop2.5},
the following result holds obviously.

\begin{prop}\label{prop2.6}
Let $v$ be a vertex of $O\Gamma_{n}^{(p,r)}$ (or $I\Gamma_{n}^{(p,r)}$). Then $v^{R}$ is a vertex of $O\Gamma_{n}^{(p,r)}$ (or $I\Gamma_{n}^{(p,r)}$) too.
\end{prop}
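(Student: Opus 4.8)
The plan is to read the conclusion off the forbidden-factor description of the two vertex sets supplied by Proposition \ref{prop2.5}, combined with the elementary fact that reversal maps factors to factors. First I would record that fact: if $v=v_{1}v_{2}\ldots v_{n}$ and $f$ is any word, then $f$ is a factor of $v$ if and only if $f^{R}$ is a factor of $v^{R}$, since an occurrence of $f$ in positions $i,\ldots,i+|f|-1$ of $v$ corresponds, read backwards, to an occurrence of $f^{R}$ in $v^{R}$ (beginning at position $n-i-|f|+2$). Consequently, for any collection $\mathcal{W}$ of words closed under the operation $f\mapsto f^{R}$, the set of binary words of length $n$ that contain no member of $\mathcal{W}$ as a factor is itself closed under $v\mapsto v^{R}$.

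Next I would check that the forbidden lists in Proposition \ref{prop2.5} are closed under reversal; in fact each forbidden word is a palindrome. When $p=1$ the lone forbidden word $1^{r+1}$ is a palindrome. When $p\geq2$, the words $1^{r+1}$ and $10^{s}1$ (for every admissible $s$) are palindromes, which settles $I\Gamma_{n}^{(p,r)}$; for $O\Gamma_{n}^{(p,r)}$ the words $10^{s}1$ are again palindromes, and $(10^{p-1})^{r}1$ equals $1\,0^{p-1}\,1\,0^{p-1}\cdots 0^{p-1}\,1$, i.e.\ $r+1$ symbols $1$ separated by $r$ equal blocks $0^{p-1}$, which is symmetric under reversal. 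With these two points in hand the proposition is immediate: a vertex $v$ of $O\Gamma_{n}^{(p,r)}$ is a length-$n$ binary word avoiding the corresponding reversal-closed forbidden list, hence $v^{R}$ avoids that list as well and so $v^{R}\in V(O\Gamma_{n}^{(p,r)})$; the argument for $I\Gamma_{n}^{(p,r)}$ is identical (the case $n=0$ being trivial, as $\lambda^{R}=\lambda$).

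I do not expect a genuine obstacle here; the only point demanding (minor) care is the palindromicity check for $(10^{p-1})^{r}1$, which can look asymmetric at first glance because of the trailing $1$ but is in fact symmetric. As an alternative I could instead run an induction on $n$ straight from the recursions (\ref{2.3}) and (\ref{2.5}), peeling off the leading block of $v$ and appending its mirror image to the end of the recursively reversed tail; this works but is clumsier, since the several branches of the recursion must be matched up by hand, so I would favour the forbidden-factor argument.
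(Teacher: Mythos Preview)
Your argument is correct and is exactly the approach the paper takes: the paper simply states that the result ``holds obviously'' by Proposition~\ref{prop2.5}, and your write-up just spells out the two-line reason (reversal preserves factor-avoidance, and each forbidden word in Proposition~\ref{prop2.5} is a palindrome). There is nothing to add.
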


By Propositions \ref{prop2.4} and \ref{prop2.5},
the following holds obviously.

\begin{prop}\label{prop2.7}
Let $n$, $p$ and $r$ be positive integers.
Then $O\Gamma_{n}^{(p,r)}\cong I\Gamma_{n}^{(p,r)}$ if and only if $p=1$ or $r=1$.
\end{prop}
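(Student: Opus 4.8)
The plan is to prove the two implications of the equivalence separately, using the realization of $O\Gamma_n^{(p,r)}$ and $I\Gamma_n^{(p,r)}$ as induced subgraphs of $Q_n$ (Proposition~\ref{prop2.5}), the identifications of Proposition~\ref{prop2.3}, and the order comparison of Proposition~\ref{prop2.4}. For the ``if'' direction, suppose first $p=1$. Then by Proposition~\ref{prop2.5} both $O\Gamma_n^{(1,r)}$ and $I\Gamma_n^{(1,r)}$ are obtained from $Q_n$ by deleting precisely the vertices that contain $1^{r+1}$ as a factor; since an induced subgraph of $Q_n$ is determined by its vertex set, this gives $O\Gamma_n^{(1,r)}=I\Gamma_n^{(1,r)}$, in particular an isomorphism. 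If instead $r=1$, Proposition~\ref{prop2.3} gives $O\Gamma_n^{(p,1)}\cong PN_{p+1}(n)\cong I\Gamma_n^{(p,1)}$. Hence ``$p=1$ or $r=1$'' implies $O\Gamma_n^{(p,r)}\cong I\Gamma_n^{(p,r)}$.

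For the converse I would argue contrapositively: assume $p\ge2$ and $r\ge2$, and show $O\Gamma_n^{(p,r)}\not\cong I\Gamma_n^{(p,r)}$. When $n>p+1$ this is immediate from Proposition~\ref{prop2.4}: there $|V(O\Gamma_n^{(p,r)})|<|V(I\Gamma_n^{(p,r)})|$, and isomorphic graphs have the same order. In particular this already shows the two families of cubes are genuinely different whenever $p,r\ge2$.

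The step I expect to be the main obstacle is the remaining finite range $n\le p+1$, where Proposition~\ref{prop2.4} supplies only equality of orders and a genuine isomorphism invariant must be produced. Here I would use the forbidden-factor description from Proposition~\ref{prop2.5}. Since $p\ge2$, no vertex of $O\Gamma_n^{(p,r)}$ contains the factor $11$; hence for $n\le p$ every vertex carries at most one $1$, so $O\Gamma_n^{(p,r)}\cong K_{1,n}$, which is acyclic. Since $r\ge2$, on the other hand, the word $1^{2}0^{n-2}$ is a vertex of $I\Gamma_n^{(p,r)}$ for every $n\ge2$, and together with $0^{n}$, $10^{n-1}$ and $010^{n-2}$ it spans an induced $4$-cycle, so $I\Gamma_n^{(p,r)}$ contains a cycle and cannot be isomorphic to a star. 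For the one remaining value $n=p+1$, whose two cubes have completely explicit vertex sets via Proposition~\ref{prop2.5}, a direct comparison --- for instance of their degree sequences --- finishes the argument. The only care needed in this small range is to carry out the comparison through a property preserved by isomorphism rather than through a vertex count, which is exactly what is not available here.
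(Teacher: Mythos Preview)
Your plan coincides with the paper's own proof, which is nothing more than the sentence ``By Propositions~\ref{prop2.4} and~\ref{prop2.5}, the following holds obviously.'' You are simply more careful than the paper: you correctly observe that Proposition~\ref{prop2.4} only separates the two cubes by order when $n>p+1$, and you supply an honest isomorphism invariant (acyclic star versus a graph containing an induced $4$-cycle) for the range $2\le n\le p$, leaving $n=p+1$ to a direct degree-sequence check. That check is easy to complete; in fact even the orders differ there, since for $p,r\ge 2$ one has $|V(O\Gamma_{p+1}^{(p,r)})|=p+3$ (the only weight-two vertex is $10^{p-1}1$) while $|V(I\Gamma_{p+1}^{(p,r)})|\ge 1+(p+1)+p=2p+2>p+3$ because $110^{p-1},\,0110^{p-2},\ldots$ are all legal $I$-codes.

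There is, however, one value of $n$ your argument silently skips, and it cannot be repaired: for $n=1$ both $O\Gamma_{1}^{(p,r)}$ and $I\Gamma_{1}^{(p,r)}$ are the single edge on $\{0,1\}$, hence isomorphic for \emph{every} $p,r\ge 1$. So the proposition is literally false as stated; it should be read either with the extra hypothesis $n\ge 2$, or as the assertion that the two \emph{families} $(O\Gamma_n^{(p,r)})_n$ and $(I\Gamma_n^{(p,r)})_n$ coincide for all $n$ if and only if $p=1$ or $r=1$. The paper does not notice this either. With that caveat your argument is complete and is a genuine proof where the paper offers none.
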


This result does not mean that there is not the same cube defined by Definitions \ref{defn1} and \ref{defn2}.
For example,
the cubes $O\Gamma_{4}^{(2,2)}$ and $I\Gamma_{4}^{(3,2)}$ are shown in Figure 3$(a)$ and 3$(b)$, respectively.
It is easy to see that $O\Gamma_{4}^{(2,2)}\cong I\Gamma_{4}^{(3,2)}$.

\hspace{1mm}
\begin{center}
\includegraphics[scale=0.75]{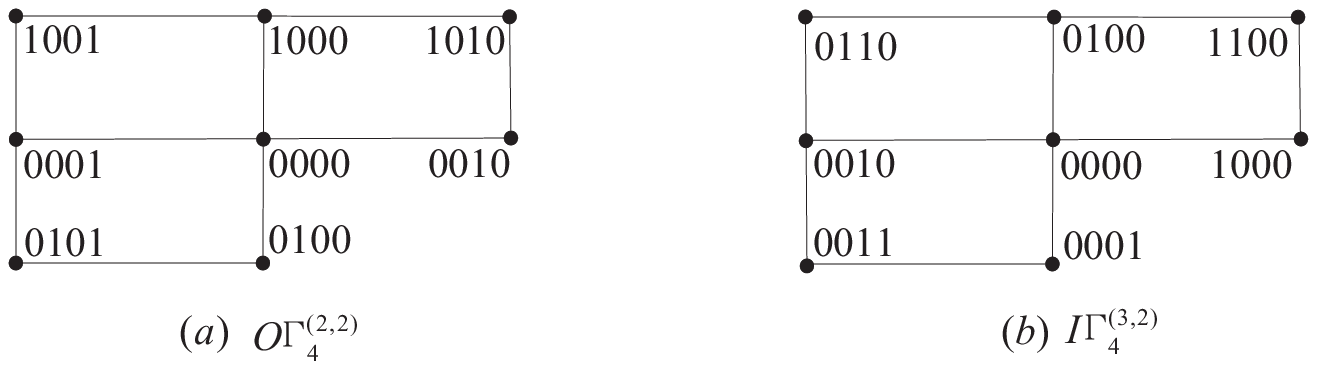}\\
{\footnotesize Figure 3. The graphs $O\Gamma_{4}^{(2,2)}$ and $I\Gamma_{4}^{(3,2)}$.} \end{center} \hspace{3mm}

\subsection{Recursive structure of edge set}

Although Egiazarian and Astola \cite{Egiazarian} gave the iterative formula of the size of $O\Gamma_{n}^{(p,r)}$,
we find it was wrong and correct it here.

\begin{prop}\label{prop2.8}
Let $p\geq1$, $r\geq1$ and $n> pr$.
Then $|E(O\Gamma_{n}^{(p,r)})|=\sum^{r}_{t=0}(|E(O\Gamma_{n-tp-1}^{(p,r)})|+t|V(O\Gamma_{n-tp-1}^{(p,r)})|)$.
\end{prop}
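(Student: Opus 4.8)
The plan is to count the edges of $O\Gamma_{n}^{(p,r)}$ via the recursive vertex decomposition (\ref{2.3}). Since $n>pr$, that formula writes $V(O\Gamma_{n}^{(p,r)})$ as a \emph{disjoint} union of the $r+1$ blocks $C_{t}=(10^{p-1})^{t}0\,V(O\Gamma_{n-tp-1}^{(p,r)})$ for $t=0,1,\dots,r$ (so that $C_{0}=0\,V(O\Gamma_{n-1}^{(p,r)})$); disjointness is clear because a word in $C_{t}$ has coordinate $tp+1$ equal to $0$ while carrying $1$s exactly at coordinates $1,p+1,\dots,(t-1)p+1$ in its leading block. Hence every vertex $v$ has a well-defined \emph{type} $t(v)\in\{0,\dots,r\}$, namely the index of the block containing it. I would then split $E(O\Gamma_{n}^{(p,r)})$ into edges whose two endpoints have the same type and edges whose endpoints have different types, and evaluate the two parts separately.

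Same type: the vertices of $C_{t}$ all share the prefix $(10^{p-1})^{t}0$ of length $tp+1$, so two of them are adjacent in $Q_{n}$ precisely when their length-$(n-tp-1)$ suffixes are adjacent; thus the edges of $O\Gamma_{n}^{(p,r)}$ lying inside $C_{t}$ are in bijection with $E(O\Gamma_{n-tp-1}^{(p,r)})$, and this part contributes $\sum_{t=0}^{r}|E(O\Gamma_{n-tp-1}^{(p,r)})|$.

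Different types: I would show that the map sending an edge $uv$ with $t(u)=i<j=t(v)$ to the pair $(v,i)$ is a bijection from the set of different-type edges onto $\{(v,i):v\in V(O\Gamma_{n}^{(p,r)}),\ 0\leq i<t(v)\}$. For the forward direction, write $v=(10^{p-1})^{j}0w$; since $i<j$, the words $u$ and $v$ agree on coordinates $1,\dots,ip$ (both equal to $(10^{p-1})^{i}$) and disagree at coordinate $ip+1$, which is a $1$ in $v$ opening its $(i+1)$-st prefix block; as $u$ and $v$ differ in only one coordinate, $u$ is exactly $v$ with coordinate $ip+1$ switched to $0$ — so the pair $(v,i)$ determines $u$ (injectivity) and recovers $i$. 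For surjectivity, given an admissible pair $(v,i)$ with $v=(10^{p-1})^{j}0w$ and $0\leq i<j$, let $v'$ be obtained from $v$ by switching the leading $1$ at coordinate $ip+1$ to $0$; then $v'$ is still a valid $O$-Fibonacci $(p,r)$-code (replacing a $1$ by a $0$ neither violates the ``at least $p-1$ zeros between $1$s'' rule nor lengthens any maximal chain of consecutive $1$s), and since coordinate $ip+1$ of $v'$ is $0$ while its first $ip$ coordinates still read $(10^{p-1})^{i}$, the decomposition (\ref{2.3}) forces $v'\in C_{i}$, i.e.\ $t(v')=i$; thus $v'v$ is a different-type edge mapped to $(v,i)$. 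Counting the target set then gives $\sum_{v}t(v)=\sum_{t=0}^{r}t\,|C_{t}|=\sum_{t=1}^{r}t\,|V(O\Gamma_{n-tp-1}^{(p,r)})|$ different-type edges. Adding the two contributions (the $t=0$ term of the second sum vanishing) yields the asserted formula.

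The one step that needs genuine care is the middle one: that a different-type edge must flip precisely coordinate $ip+1$, and — more delicately — that deleting any of the $j$ leading prefix-$1$s of a type-$j$ vertex always lands back inside $O\Gamma_{n}^{(p,r)}$. The latter amounts to re-checking the two defining conditions of an $O$-Fibonacci $(p,r)$-code after the flip, using the precise meaning of ``$r$ consecutive $1$s'' (the pattern $(10^{p-1})^{t}$ with $t\leq r$) recalled in the introduction; deleting a $1$ can only enlarge gaps and can only shorten or split such chains, never create or lengthen them. Everything else is bookkeeping with the disjoint decomposition (\ref{2.3}). It is also worth noting that the hypothesis $n>pr$ is exactly what makes all $r+1$ blocks $C_{t}$ nonempty, so that no degenerate small cases arise and (\ref{2.3}) is the full $(r+1)$-term recursion.
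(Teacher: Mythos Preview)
Your proof is correct and follows the same approach as the paper: decompose $O\Gamma_{n}^{(p,r)}$ via (\ref{2.3}) into the $r+1$ blocks $C_{t}$, count the within-block edges as $\sum_{t}|E(O\Gamma_{n-tp-1}^{(p,r)})|$, and show that the between-block edges number $\sum_{t}t\,|V(O\Gamma_{n-tp-1}^{(p,r)})|$ because each vertex of type $j$ has exactly one neighbor in each $C_{i}$ with $i<j$. Your argument is in fact more carefully justified than the paper's, which asserts the cross-edge count by appeal to a figure, whereas you explicitly set up and verify the bijection (including the check that flipping a leading prefix-$1$ yields another valid $O$-Fibonacci $(p,r)$-code of the correct type).
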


\begin{proof}
By the iterative formula of the vertex set of $O\Gamma_{n}^{(p,r)}$ as shown in Eq. (\ref{2.3}),
the graph $O\Gamma_{n}^{(p,r)}$ can be decomposed into $r+1$ disjoint subgraphs isomorphic
to $O\Gamma_{n-tp-1}^{(p,r)}$ for $t=0,1,\ldots,r$, respectively.
As shown in Figure 4,
there are $|V(O\Gamma_{n-ip-1}^{(p,r)})|$ edges connecting subgraphs $O\Gamma_{n-ip-1}^{(p,r)}$ and $O\Gamma_{n-jp-1}^{(p,r)}$,
$0\leq i< j\leq r$.
So there are $\sum^{r}_{t=0}(t|V(I\Gamma_{n-tp-1}^{(p,r)})|)$ edges between these $r+1$ subgraphs.
This completes the proof.
\end{proof}

\hspace{1mm}
\begin{center}
\includegraphics[scale=0.74]{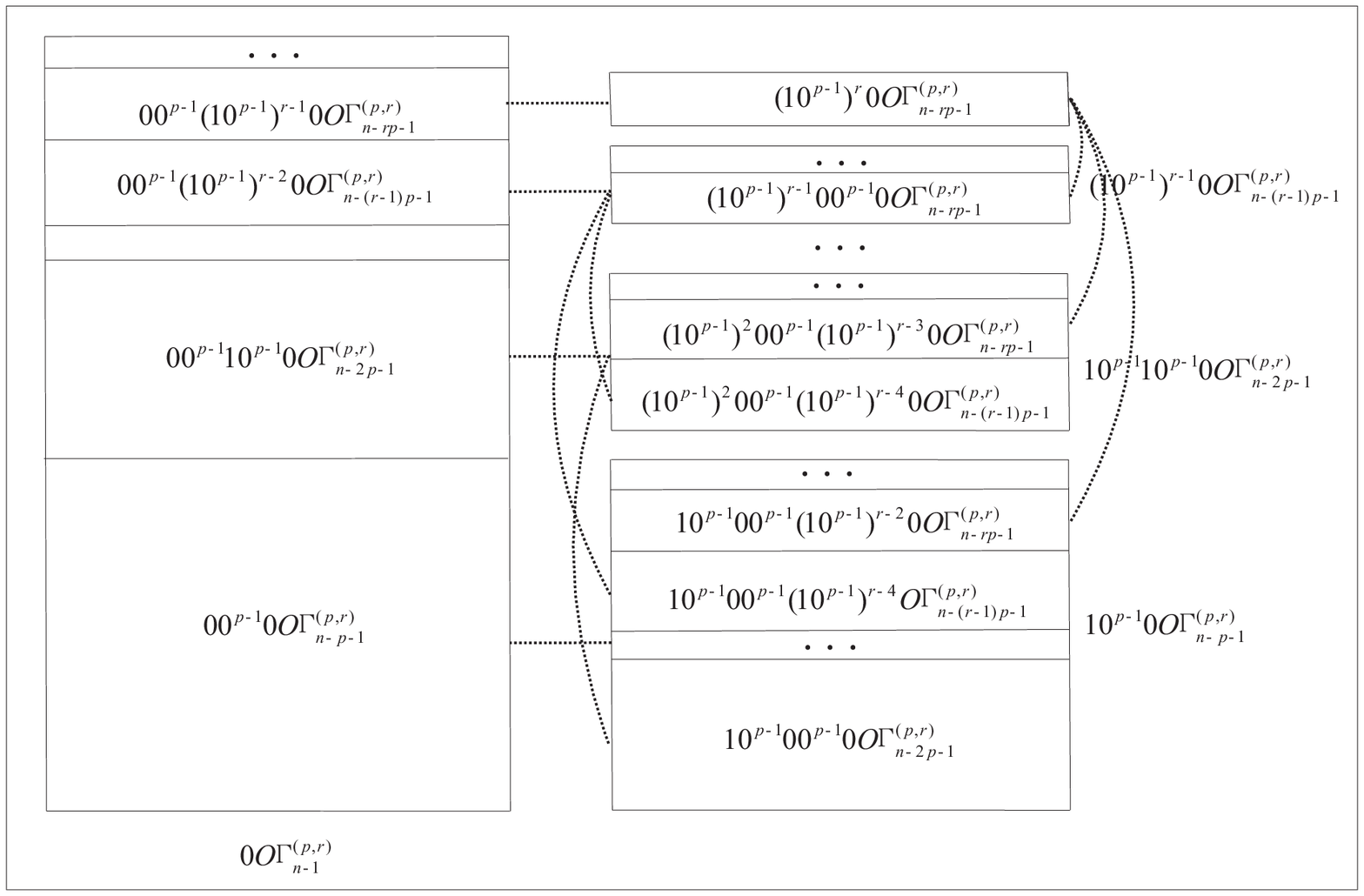}\\
{\footnotesize Figure 4. Illustration of the decomposed graphs of $O\Gamma_{n}^{(p,r)}$.} \end{center}\hspace{3mm}

For the cube $I\Gamma_{n}^{(p,r)}$,
we have the following result.

\begin{prop}\label{prop2.9}
Let $p\geq1,r\geq1$ and $n>p+r$.
Then the following recursive structure holds on the edge set of $O\Gamma_{n}^{(p,r)}$:

$|E(I\Gamma_{n}^{(p,r)})|=|E(I\Gamma_{n-1}^{(p,r)})|+
\sum^{r}_{t=1}(|E(I\Gamma_{n-p-t}^{(p,r)})|+2|V(I\Gamma_{n-p-t}^{(p,r)})|)
-|V(I\Gamma_{n-p-1}^{(p,r)})|$.
\end{prop}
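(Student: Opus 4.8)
The plan is to run the decomposition argument of Proposition~\ref{prop2.8}, but now starting from the vertex recursion~(\ref{2.5}). Because $n>p+r$, every vertex of $I\Gamma_{n}^{(p,r)}$ either begins with $0$, or begins with a maximal initial block $1^{t}$ with $1\le t\le r$ that is necessarily followed by at least $p$ zeros; hence $I\Gamma_{n}^{(p,r)}$ is the disjoint union of $r+1$ nonempty induced subgraphs $G_{0},G_{1},\ldots,G_{r}$, where $G_{0}=0\,V(I\Gamma_{n-1}^{(p,r)})\cong I\Gamma_{n-1}^{(p,r)}$ and $G_{t}=1^{t}0^{p}\,V(I\Gamma_{n-p-t}^{(p,r)})\cong I\Gamma_{n-p-t}^{(p,r)}$ for $1\le t\le r$. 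Consequently $|E(I\Gamma_{n}^{(p,r)})|=|E(I\Gamma_{n-1}^{(p,r)})|+\sum_{t=1}^{r}|E(I\Gamma_{n-p-t}^{(p,r)})|+C$, where $C$ is the number of edges joining two distinct parts, and everything reduces to evaluating $C$.

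To compute $C$ I would classify, for each pair $0\le i<j\le r$, the edges between $G_{i}$ and $G_{j}$. The decisive observation is that any $x\in G_{i}$ and $y\in G_{j}$ already differ in coordinate $i+1$: that coordinate equals $0$ in $x$ (the leading $0$ of $x$ when $i=0$, otherwise the first entry of the forced block $0^{p}$ of $x$), while it equals $1$ in $y$ since $i+1\le j$. Hence an edge $xy$ forces $x$ and $y$ to agree on all coordinates $i+2,\ldots,n$, and matching these suffixes pins down $x$ and $y$. If $i=0$ one obtains $x=0\,1^{j-1}0^{p}w$ and $y=1^{j}0^{p}w$ with $w\in V(I\Gamma_{n-p-j}^{(p,r)})$; if $i\ge 1$ and $j=i+1$ one obtains $x=1^{i}0^{p+1}w$ and $y=1^{i+1}0^{p}w$ with $w\in V(I\Gamma_{n-p-i-1}^{(p,r)})$; and if $i\ge 1$ with $j\ge i+2$ no edge is possible, because the suffix of $x$ from coordinate $i+2$ still begins with $0$ (we are inside the block $0^{p}$, using $p\ge 2$) whereas that of $y$ begins with $1$. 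Conversely, in the first two cases every legal choice of $w$ does give an edge; this only requires observing that $0\,1^{j-1}0^{p}w$ (respectively $1^{i}0^{p+1}w$) is a legal $I$-Fibonacci $(p,r)$-code lying in the indicated part, which is immediate from its block structure. Thus $G_{0}$ and $G_{j}$ are joined by a matching of $|V(I\Gamma_{n-p-j}^{(p,r)})|$ edges for $1\le j\le r$, $G_{i}$ and $G_{i+1}$ by a matching of $|V(I\Gamma_{n-p-i-1}^{(p,r)})|$ edges for $1\le i\le r-1$, and no other pair of parts is joined.

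The two families of cross edges are disjoint, so summing them gives
\[
C=\sum_{j=1}^{r}|V(I\Gamma_{n-p-j}^{(p,r)})|+\sum_{j=2}^{r}|V(I\Gamma_{n-p-j}^{(p,r)})|=2\sum_{t=1}^{r}|V(I\Gamma_{n-p-t}^{(p,r)})|-|V(I\Gamma_{n-p-1}^{(p,r)})|.
\]
Plugging this into $|E(I\Gamma_{n}^{(p,r)})|=|E(I\Gamma_{n-1}^{(p,r)})|+\sum_{t=1}^{r}|E(I\Gamma_{n-p-t}^{(p,r)})|+C$ and regrouping the two sums over $t$ yields the asserted recursion. (This tail analysis uses $p\ge2$; when $p=1$ one has $I\Gamma_{n}^{(1,r)}\cong O\Gamma_{n}^{(1,r)}$ by Proposition~\ref{prop2.7}, so that case is controlled by the recursion of Proposition~\ref{prop2.8} instead.)

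The step I expect to be the main obstacle is exactly this cross-edge bookkeeping: making the claim that non-consecutive parts $G_{i},G_{j}$ carry no edges watertight, and on the constructive side establishing that an edge between $G_{0}$ and $G_{j}$ must have the form $0\,1^{j-1}0^{p}w\sim 1^{j}0^{p}w$ and an edge between $G_{i}$ and $G_{i+1}$ the form $1^{i}0^{p+1}w\sim 1^{i+1}0^{p}w$, and then checking that all of these words are genuine vertices of the relevant parts. Once the matchings between parts are identified, the remaining manipulation of the index sums is routine.
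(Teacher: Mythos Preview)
Your approach is the same as the paper's: decompose along~(\ref{2.5}) and count cross-edges between the parts $G_0,\ldots,G_r$. Your cross-edge analysis is more detailed than the paper's (which simply asserts the two matchings $G_0\!-\!G_t$ and $G_{t-1}\!-\!G_t$ and their sizes without justifying that nothing else occurs), and for $p\ge 2$ your argument is correct.

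Your caveat about $p=1$ is not a technicality but a genuine issue, and you should sharpen it rather than wave toward Proposition~\ref{prop2.8}. When $p=1$ the parts $G_i$ and $G_j$ with $1\le i<j-1$ \emph{do} share edges: for instance $x=1^{i}01^{j-i-1}0w\in G_i$ and $y=1^{j}0w\in G_j$ differ only in coordinate $i+1$, and $x$ is a legal code since a single $0$ suffices between blocks of $1$s. Thus for $p=1$ the total cross-edge count is $\sum_{t=1}^{r}t\,|V(I\Gamma_{n-1-t}^{(1,r)})|$ (in agreement with Proposition~\ref{prop2.8}), which for $r\ge 3$ is strictly larger than the $2\sum_{t=1}^{r}|V|-|V(I\Gamma_{n-2}^{(1,r)})|$ claimed here. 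So the recursion in the proposition, and the paper's proof of it, are only valid as written for $p\ge 2$ (they happen to agree with the correct count when $r\le 2$); your deferral to Proposition~\ref{prop2.8} yields a different formula, not the one asserted.
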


\hspace{1mm}
\begin{center}
\includegraphics[scale=0.70]{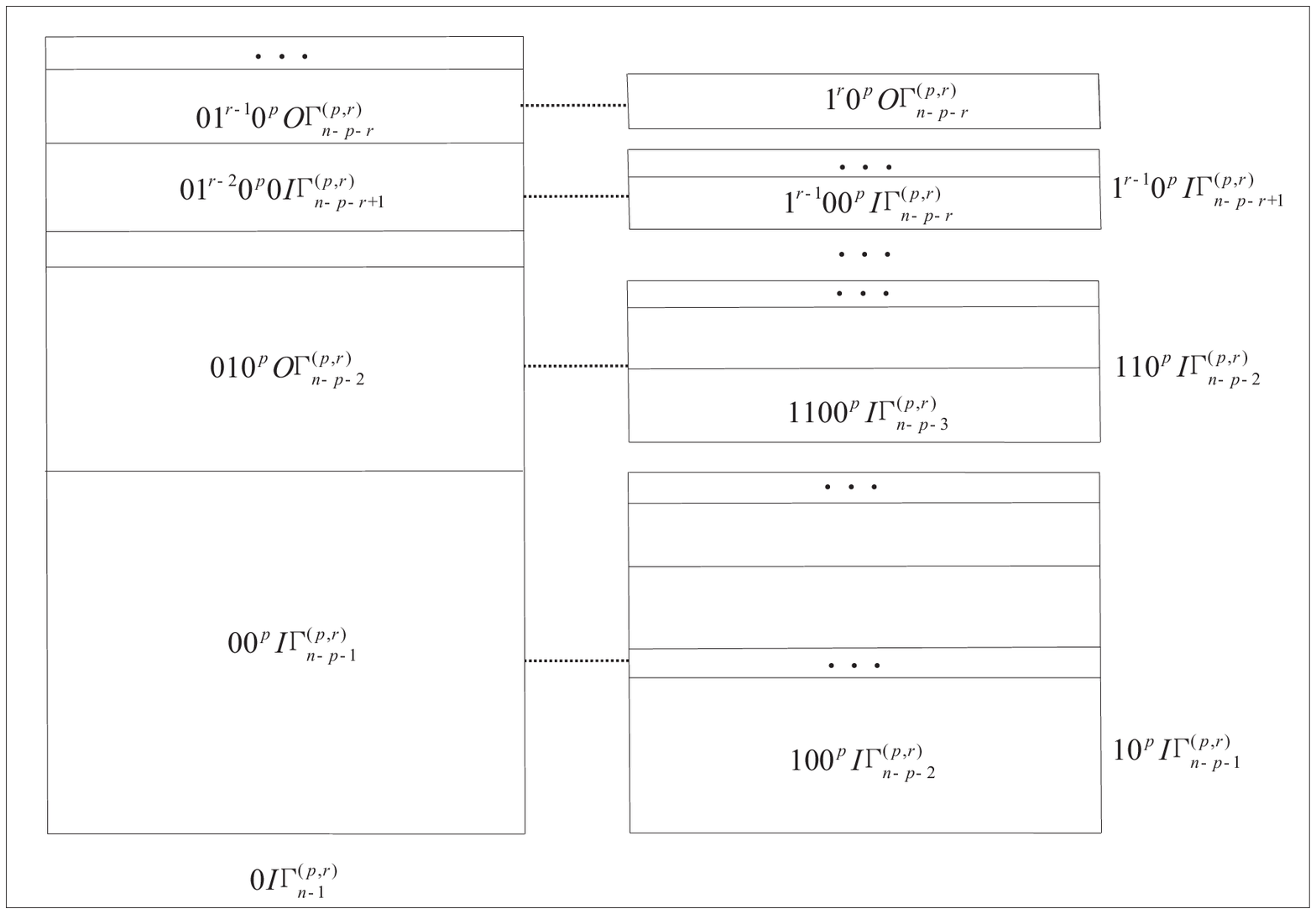}\\
{\footnotesize Figure 5. The decomposed graph of $I\Gamma_{n}^{(p,r)}$.}
\end{center}
\hspace{3mm}

\begin{proof}
As shown in Figure 5,
the graph $I\Gamma_{n}^{(p,r)}$ can be decomposed into $r+1$ disjoint subgraphs isomorphic to $I\Gamma_{n-1}^{(p,r)}$,
$I\Gamma_{n-p-t}^{(p,r)}$ for $t=1,\ldots,r$, respectively,
by the the iterative formula of $I\Gamma_{n}^{(p,r)}$ shown in Eq. (\ref{2.5}).
As there is $|V(I\Gamma_{n-p-t}^{(p,r)})|$ edges between the subgraphs $I\Gamma_{n-p-t}^{(p,r)}$ and $I\Gamma_{n-1}^{(p,r)}$ for $t=1,\ldots,r$; and
$|V(I\Gamma_{n-p-t}^{(p,r)})|$ edges between the subgraphs $I\Gamma_{n-p-t}^{(p,r)}$ and $I\Gamma_{n-p-t+1}^{(p,r)}$, for $t=2,\ldots,r$.
Therefore there are $2\sum^{r}_{t=1}|V(I\Gamma_{n-p-t}^{(p,r)})|-|V(I\Gamma_{n-p-1}^{(p,r)})|$ edges between these $r+1$ subgraphs.
The proof is completed.
\end{proof}

\smallskip
\section{Some distance invariants}

Recall that the eccentricity $e(v)$ of a vertex $v$ in a graph $G$ is the maximum distance between $v$ and the other vertex, i.e.,
$e(v)=\max\limits_{u\in V(G)} d(v,u)$.
The diameter $diam(G)$ of $G$ is the maximal eccentricity $e(v)$ when $v$ runs in $G$, i.e.,
$diam(G)=\max\limits_{v\in V(G)} e(v)=\max\limits_{u,v\in V(G)} d(u,v)$.
The radius $rad(G)$ of $G$ is the minimum eccentricity of the vertices of $G$, i.e.,
$rad(G)=\min\limits_{v\in V(G)} e(v)$.
A vertex is called central if $e(v)=rad(G)$.
The center $\mathcal{Z}(G)$ of $G$ is the set of all central vertices.
Note that the eccentricity, the radius and diameter of Fibonacci cube were studied in \cite{AM} and \cite{MS}.
In this section,
we study the radius,
center and diameter of $O\Gamma_{n}^{(p,r)}$ and $I\Gamma_{n}^{(p,r)}$.

\subsection{Center and Radius}

Let $\mathcal{G}=\{\eta=g_{1}g_{2}\ldots g_{n}|\eta\in\mathcal{B}^{n}\}$
and $w(\eta)=\Sigma_{i=1}^{n}g_{i}$ be the weight of $\eta$ (in other words, $w(\eta)$ be the numbers of $1$s in $\eta$).
Suppose $\eta\in\mathcal{G}$,
$w(\eta)\geq1$ and $g_{i_{1}}=1,\ldots,g_{i_{j}}=1$ for some $j\geq1$.
Then we use $\eta^{\vee}_{i_{t}}$ to denote the word obtained from $\eta$ by inserting $0^{r+1}$ close behind the
$i_{t}$th coordinate, where $t=1,\ldots,j$.
For convenience,
let $\eta^{\vee}_{0}$ be the word $0^{r+1}\eta$.
Then we use $\mathcal{G}^{\vee}$ to denote the set of words of length $n+r+1$:

\begin{align}
\mathcal{G}^{\vee}=\{\eta^{\vee}_{0}|\eta\in \mathcal{G}\}\cup\{\eta^{\vee}_{i_{t}}|\eta=g_{1}g_{2}\ldots g_{n}\in\mathcal{G}~and~g_{i_{1}}=\ldots=g_{i_{j}}=1~for~some~j\geq1\}.
\end{align}

In this subsection,
the radius and center of $O\Gamma_{n}^{(p,r)}$ is determined completely.
For all $p\geq1,r\geq1$ and $n\geq1$,
the radius of the cubes $O\Gamma_{n}^{(p,r)}$ can be expressed by a unified formula,
but the construction of $\mathcal{Z}(O\Gamma_{n}^{(p,r)})$ for the case $p=1$ is a little more complicated than for the case $p\geq2$.
Hence,
we give Theorems \ref{thm4.1} and \ref{thm4.2} to distinguish the case $p=1$ and $p\geq2$.

\begin{thm}\label{thm4.1}
Let $r\geq1$ and $n\geq1$.
Then $0^{n}\in\mathcal{Z}(O\Gamma_{n}^{(1,r)})$, $rad(O\Gamma_{n}^{(1,r)})=\lceil \frac{nr}{r+1}\rceil$, and further
$\mathcal{Z}(O\Gamma_{n}^{(1,r)})$ and $|\mathcal{Z}(O\Gamma_{n}^{(1,r)})|$ can be determined as follows:

$(1)$  if $mod(n,r+1)=0$,
then $\mathcal{Z}(O\Gamma_{n}^{(1,r)})=\{0^{n}\}$ and $|\mathcal{Z}(O\Gamma_{n}^{(1,r)})|=1$;

$(2)$  if $mod(n,r+1)=k\neq0$, then
$\mathcal{Z}(O\Gamma_{n}^{(1,r)})=\mathcal{Z}(O\Gamma_{n-r-1}^{(1,r)})^{\vee}$,
with the initial set $\mathcal{Z}(O\Gamma_{k}^{(1,r)})=V(Q_{k})$
and $|\mathcal{Z}(O\Gamma_{n}^{(1,r)})|=\sum\limits_{j=1}^{k+1}a_{j}\lceil\frac{n}{r+1}\rceil^{k+1-j}$,
where

\begin{center}
$\left[
 \begin{matrix}
   a_{1}\\
   a_{2}\\
   \vdots\\
   a_{k+1}\\
  \end{matrix}
  \right]=$
$\left[
 \begin{matrix}
   1 & 1 & \cdots & 1\\
   2^{k} & 2^{k-1} & \cdots & 1\\
   \vdots & \vdots & \ddots & \vdots \\
   (k+1)^{k} & (k+1)^{k-1} & \cdots &1 \\
  \end{matrix}
  \right]^{-1}$
  $\left[
 \begin{matrix}
   |\mathcal{Z}(O\Gamma_{k}^{(1,r)})|\\
   |\mathcal{Z}(O\Gamma_{k+r+1}^{(1,r)})|\\
   \vdots\\
   |\mathcal{Z}(O\Gamma_{k+k(r+1)}^{(1,r)})|\\
  \end{matrix}
  \right].$
  \end{center}
\end{thm}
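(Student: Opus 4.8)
The plan is to analyze the cube $O\Gamma_n^{(1,r)}$ via its recursive decomposition from Eq.~(\ref{2.3}), which for $p=1$ reads $V(O\Gamma_n^{(1,r)})=\bigcup_{t=0}^{r}1^{t}0\,V(O\Gamma_{n-t-1}^{(1,r)})$, i.e. a vertex is built by choosing a block of $t\le r$ leading ones, a mandatory zero, and then a shorter valid word. Equivalently, by Proposition~\ref{prop2.5}, $O\Gamma_n^{(1,r)}$ is $Q_n$ with all words containing $1^{r+1}$ deleted. The first step is to compute $e(0^n)$ and show $e(0^n)=\lceil nr/(r+1)\rceil$: the distance from $0^n$ to a vertex $v$ is just $w(v)$, and the maximum weight of a word of length $n$ avoiding $1^{r+1}$ is achieved by the pattern $(1^r0)^{\lfloor n/(r+1)\rfloor}\cdots$, giving exactly $n-\lceil n/(r+1)\rceil=\lceil nr/(r+1)\rceil$ ones. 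So $e(0^n)=\lceil nr/(r+1)\rceil$.

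Next I would prove that this is the minimum eccentricity, i.e. $rad(O\Gamma_n^{(1,r)})=e(0^n)$, so that $0^n$ is central. For this, I would take an arbitrary vertex $u$ and exhibit a vertex $w$ with $d(u,w)\ge\lceil nr/(r+1)\rceil$. The natural candidate is a maximum-weight valid word chosen to disagree with $u$ as much as possible: partition the $n$ coordinates into $\lceil n/(r+1)\rceil$ consecutive windows of length $r+1$ (the last possibly shorter), and in each window place the $r$ ones in the positions where $u$ has $0$s as far as possible — a short combinatorial argument shows one can always force disagreement in at least $r$ of every $r+1$ consecutive coordinates, except possibly losing a controlled amount in windows where $u$ is already heavy. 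Making this bound tight requires care; this balancing argument is the first of the two main obstacles, and I would handle it by the same window-by-window counting used for $e(0^n)$, noting that whenever $u$ has weight $\ge 1$ in a window we can still disagree with it in $\ge r-(\text{its weight})+\dots$ positions — so overall $e(u)\ge\lceil nr/(r+1)\rceil$, with equality forced in the structure of the extremal $u$.

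For the center itself I would split on $mod(n,r+1)$. When $mod(n,r+1)=0$, the inequality $e(u)\ge\lceil nr/(r+1)\rceil$ should be shown to be \emph{strict} unless $u=0^n$: if $u$ has any $1$ in coordinate $i$, then in the window containing $i$ we gain a disagreement against a maximum-weight word already counted, pushing $e(u)$ above the radius; so $\mathcal{Z}=\{0^n\}$. When $mod(n,r+1)=k\ne 0$, the recursive claim $\mathcal{Z}(O\Gamma_n^{(1,r)})=\mathcal{Z}(O\Gamma_{n-r-1}^{(1,r)})^{\vee}$ is proved by induction on $n$: a central vertex, read left to right, must ``spend'' its first $r+1$ coordinates optimally, and the only way to do so without losing eccentricity is to have a maximal run followed by padding — precisely the $\vee$-operation of inserting $0^{r+1}$ after a chosen $1$ (or prepending $0^{r+1}$); conversely every word so obtained is checked to have eccentricity $\lceil nr/(r+1)\rceil$ by reusing the window count. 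The base case $\mathcal{Z}(O\Gamma_k^{(1,r)})=V(Q_k)$ holds because for $n=k\le r$ no word of length $k$ contains $1^{r+1}$, so $O\Gamma_k^{(1,r)}=Q_k$, whose center is everything.

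Finally, for the cardinality formula I would set $z_m:=|\mathcal{Z}(O\Gamma_{k+m(r+1)}^{(1,r)})|$ and extract a recurrence from the $\vee$-operation: passing from length $n-r-1$ to $n$, each central word $\eta$ of weight $w(\eta)$ spawns $w(\eta)+1$ new central words (one per $1$, plus the prepend option). Summing, $z_{m}$ relates to $z_{m-1}$ and to the total weight of the previous center set, and a second bookkeeping identity (total weight grows linearly in the count because each step adds exactly $r$ to every word's weight pattern in a controlled way) closes the system; the upshot is that $|\mathcal{Z}(O\Gamma_n^{(1,r)})|$ is a polynomial of degree $k$ in $\lceil n/(r+1)\rceil$. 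The stated matrix inversion is then simply Lagrange/Vandermonde interpolation: a degree-$k$ polynomial in the variable $x=\lceil n/(r+1)\rceil$ is determined by its $k+1$ values at $x=1,2,\dots,k+1$ (these are $n=k,\ k+(r+1),\dots,k+k(r+1)$), and inverting the Vandermonde matrix with nodes $1,\dots,k+1$ recovers the coefficient vector $(a_1,\dots,a_{k+1})$. The main obstacle here is establishing that the count is genuinely polynomial of degree exactly $k$ (not merely eventually polynomial); I expect this to follow from solving the linear recurrence for $z_m$ explicitly and observing its characteristic root $1$ has multiplicity $k+1$, but verifying the multiplicity is where the real work lies.
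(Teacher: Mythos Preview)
Your overall strategy matches the paper's in outline---compute $e(0^n)$, bound $e(u)$ from below by exhibiting far-away witnesses, characterize the equality cases, then count recursively via the $\vee$ operation and finish by Vandermonde interpolation---but there is a genuine error in the counting step.

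The claim that ``each central word $\eta$ of weight $w(\eta)$ spawns $w(\eta)+1$ new central words'' tacitly assumes the map $(\eta,\text{insertion point})\mapsto\eta^\vee$ is injective on the center. It is not: for $r=2$, the central words $00001$ and $01000$ of $O\Gamma_{5}^{(1,2)}$ both produce $00001000\in\mathcal{Z}(O\Gamma_{8}^{(1,2)})$ (the first by inserting $000$ after its $1$, the second by prepending $000$). Your multiset count gives $|\mathcal{Z}(O\Gamma_{8}^{(1,2)})|=|\mathcal{Z}(O\Gamma_{5}^{(1,2)})|+\sum_{\eta}w(\eta)=8+10=18$, but the correct value is $13$. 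Moreover, since $\vee$ inserts only zeros, the weight of each word is \emph{unchanged}, so your ``second bookkeeping identity'' about total weight growing cannot hold either. The paper resolves this by refining the center into pieces $\mathcal{Z}^{t}_{s}$ indexed by the weight $t$ \emph{and} the number $s$ of $1$s lying in the last $k$ coordinates; this extra parameter makes the transition honest (their Eq.~(4.8)), and one reads off directly that $|\mathcal{Z}^{t}(O\Gamma_{k+m(r+1)}^{(1,r)})|$ is an arithmetic progression of order $t$ in $m$, hence $|\mathcal{Z}|$ is polynomial of degree $k$---no characteristic-root analysis is needed.

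A secondary gap is the window argument for $e(u)\ge\lceil nr/(r+1)\rceil$. Even granting the inequality, determining the center requires knowing \emph{exactly} which $u$ achieve equality, and the window heuristic does not isolate this. The paper instead fixes the two extremal patterns whose $1$s sit at the residue classes $\mathcal{C}_n=\{c:\mathrm{mod}(c,r+1)\neq 0\}$ and its reverse $\mathcal{C}_n^R$, and for each $u\neq 0^n$ explicitly builds a vertex at distance $>e(0^n)$ unless every $1$ of $u$ lies in the set $\mathcal{C}'_n=\{c\in\mathcal{C}_n:\mathrm{mod}(c,r+1)\le k\}$ with strictly increasing residues (their conditions (a),(b),(c)). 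These structural conditions are precisely what makes the $\vee$ description of $\mathcal{Z}(O\Gamma_n^{(1,r)})$ valid and the refined counting possible; your inductive sketch would need to recover them before the recursion can start.
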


\begin{proof}
Let $r\geq1$ and $n=s(r+1)+k$ for some $s\geq 0$ and $0\leq k\leq r$.
Now we show $0^{n}\in\mathcal{Z}(O\Gamma_{n}^{(1,r)})$ and $rad(O\Gamma_{n}^{(1,r)})=\lceil \frac{nr}{r+1}\rceil$.
This need to show that $e(0^{n})=\lceil\frac{nr}{r+1}\rceil$,
and for every vertex $u\neq0^{n}$, $e(u)\geq e(0^{n})$ holds.

First we show that $e(0^{n})=\lceil\frac{nr}{r+1}\rceil$.
It is easy to see that the distance $d(0^{n},v)$ is $w(v)$ for any vertex $v$ of $O\Gamma_{n}^{(1,r)}$.
As in every $r+1$ consecutive elements of a vertex $v$ of $O\Gamma_{n}^{(1,r)}$
there are at least one $0$,
the vertex $v=1^{r}01^{r}0\ldots$ (or some factor of $v$ from the first coordinate of it) is a vertex with the most elements $1$.
Hence, $e(0^{n})=d(0^{n},v)=\lceil\frac{nr}{r+1}\rceil$.
The following sets of numbers are needed to our proof.
Let
\begin{equation} \label{4.1}
\begin{split}
&[n]=\{1,2,\ldots,n\},\\
&\mathcal{C}_{n}=\{c|c\in[n]~and\mod(c,r+1)\neq0\},\\
&\mathcal{C}^{R}_{n}=\{n+1-c|c\in \mathcal{C}_{n}\},\\
&\mathcal{C}'_{n}=\{c|c\in\mathcal{C}_{n}~and\mod(c,r+1)\leq k\},\\
&\mathcal{C}^{*}_{n}=\{c|c\in[n]~and\mod(c,r+1)=0\},\\
&\mathcal{C}^{*R}_{n}=\{n+1-c|c\in \mathcal{C}^{*}_{n}\}, and\\
&\mathcal{C}^{**}_{n}=\mathcal{C}_{n}\setminus (\mathcal{C}^{*R}_{n}\cup \mathcal{C}'_{n})).
\end{split}
\end{equation}

\noindent
Then it is easy to see that the following facts hold:
\begin{equation} \label{4.2}
\begin{split}
&[n]=\mathcal{C}_{n}\cup\mathcal{C}^{*}_{n},\\
&\mathcal{C}_{n}=\mathcal{C}^{*R}_{n}\cup \mathcal{C}'_{n}\cup\mathcal{C}^{**}_{n},\\
&\mathcal{C}'_{n}\cup\mathcal{C}^{**}_{n}=\mathcal{C}_{n}\cap\mathcal{C}^{R}_{n},and\\
&\mathcal{C}^{**}_{n}=[n]\setminus (\mathcal{C}^{*R}_{n}\cup \mathcal{C}'_{n}\cup\mathcal{C}^{*}_{n}).
\end{split}
\end{equation}

Note that if $k=0$, then $\mathcal{C}'_{n}=\emptyset$,
if $k=r-1$, then $\mathcal{C}^{**}_{n}=\emptyset$,
if $k=r$, then $\mathcal{C}'_{n}=\mathcal{C}_{n}$,
$\mathcal{C}^{*}_{n}=\mathcal{C}^{*R}_{n}$ and $\mathcal{C}^{**}_{n}=\emptyset$.
Assume $u=u_{1}u_{2}\ldots u_{n}\neq0^{n}$ is any vertex of $O\Gamma_{n}^{(p,r)}$,
and the set of the coordinates of the element $1$s in $u$ is denoted as $\mathcal{X}$.
Obviously, $|\mathcal{X}|=w(u)\geq1$.
Suppose that $u_{t_{1}}=\ldots=u_{t_{|\mathcal{X}|}}=1$.
Now we show that $e(u)\geq e(0^{n})$,
and ``='' holds if and only if the following conditions hold at the same time:

$(a)$ $t_{j}\in\mathcal{C}'_{n}$ for all $j=1,\ldots,|\mathcal{X}|$;

$(b)$ $k\neq0$, and $1\leq |\mathcal{X}|\leq k$; and

$(c)$ for $|\mathcal{X}|\geq2$ and $t_{i}<t_{i+1}$, $\mod(t_{i},r+1)<\mod(t_{i+1},r+1)$.

\noindent
It is clear that if $(a),(b)$ and $(c)$ are proven,
then we known that $\mathcal{Z}(O\Gamma_{n}^{(1,r)})=\{0^{n}\}$ for the case $k=0$.
Further, by $(a),(b)$ and $(c)$ we can determine $\mathcal{Z}(O\Gamma_{n}^{(1,r)})$ and $|\mathcal{Z}(O\Gamma_{n}^{(1,r)})|$ for the case $k\neq0$ as follows.

Assume that $\mathcal{Y}=\mathcal{X}\cap\mathcal{C}^{*}_{n}\neq\emptyset$.
Then $|\mathcal{Y}|>0$, and
we consider the word $u^{*}$ obtained from $u$ by replacing the $|\mathcal{X}|$ elements $1$ with $0$ and the elements in the coordinates belong to $\mathcal{C}_{n}\setminus \mathcal{X}$ with $1$.
It is clear that $u^{*}\in V(O\Gamma_{n}^{(1,r)}£©)$ and $d(u,u^{*})=e(0^{n})+|\mathcal{Y}|$.
Now we assume that $\mathcal{Y}'=\mathcal{X}\cap\mathcal{C}^{*R}_{n}\neq\emptyset$,
and so $|\mathcal{Y}'|>0$.
Then we consider the word $u^{*}$$'$ obtained from $u$ by replacing the $|\mathcal{X}|$ elements $1$ with $0$ and the elements in the coordinates belong to $\mathcal{C}^{R}_{n}\setminus \mathcal{X}$ with $1$.
It is easy to see that $u^{*}$$'$$\in V(O\Gamma_{n}^{(1,r)}£©)$ and $d(u,u^{*}$$'$)$=e(0^{n})+|\mathcal{Y}'|$.
So if $w(u)>0$ and $\mathcal{X}\cap(\mathcal{C}^{*}_{n}\cup\mathcal{C}^{*R}_{n})\neq\emptyset$,
then $e(u)>e(0^{n})$.

Now we assume that $\mathcal{U}=\mathcal{X}\cap\mathcal{C}^{**}_{n}\neq\emptyset$.
Then $|\mathcal{U}|>0$.
We chose any $t_{i}\in\mathcal{X}\cap\mathcal{C}^{**}_{n}$,
and so $u_{t_{i}}=1$.
We can consider the vertex $u^{**}$ obtained from $u$ by replacing the $|\mathcal{X}|$ elements $1$ with $0$ and the elements of the coordinates belonging to $(\mathcal{C}^{R}_{n}\setminus \{1,2,\ldots,t_{i}\})\cup (\mathcal{C}_{n}\setminus \{t+1,t+2,\ldots,n\}$) with $1$.
Obviously,
$u^{**}\in V(O\Gamma_{n}^{(p,r)})$,
$u$ and $u^{**}$ agree in $[\frac{n}{r+1}]-1$ coordinates.
Note that $0^{n}$ and $v$ agree in $[\frac{n}{r+1}]$ coordinates,
and so $d(u,u^{**})=e(0^{n})+1=\lceil\frac{nr}{r+1}\rceil+1$.
Hence,
$e(u)>e(0)$ in this case.

From the above discussion,
we know that the following claim holds.

{\bf Claim 1.} \emph{If $\mathcal{X}\cap(\mathcal{C}^{*}_{n}\cup\mathcal{C}^{*R}_{n}\cup\mathcal{C}^{**}_{n})\neq\emptyset$,
then $e(u)>e(0^{n})$}.

By Eq. \ref{4.2},
$[n]=\mathcal{C}^{*R}_{n}\cup \mathcal{C}'_{n}\cup\mathcal{C}^{**}_{n}\cup\mathcal{C}^{*}_{n}$.
For the case $k=0$, we know $\mathcal{C}'_{n}=\emptyset$ and so
$[n]=(\mathcal{C}^{*R}_{n}\cup\mathcal{C}^{*}_{n}\mathcal\cup{C}^{**}_{n})$ by the above Claim.
Hence, $e(u)>e(0^{n})$ for all $u\neq0^{n}$,
and so $0^{n}$ is the only central vertex of $O\Gamma_{n}^{(p,r)}$ if $k=0$.

Now we turn to consider the case $k>0$.
By Eqs. (\ref{4.1}) and (\ref{4.2}),
we know that $\mathcal{C}'_{n}\neq\emptyset$.
If there exists central vertex $u$ such that $w(u)>0$,
then $\mathcal{X}\subseteq \mathcal{C}'_{n}$ must holds by Claim 1.
Hence, $(a)$ holds.
We further can find all the central vertices by showing $(b)$ and $(c)$ hold as follows.

First for the case $|\mathcal{X}|=1$,
we consider the vertex $u'$ obtained from $u$ by replacing this element $1$ with $0$ and the elements in the coordinates belong to $\mathcal{C}_{n}\setminus \{t_{1}\}$ with $1$.
Since there at least $[\frac{n}{r+1}]$ zeros in any vertex of length $n$,
it is obvious that $e(u)=d(u,u')=e(0^{n})=\lceil\frac{nr}{r+1}\rceil$.
Now we consider the case $|\mathcal{X}|\geq 2$.
So $u_{t_{1}}=u_{t_{2}}=\ldots=u_{t_{|\mathcal{X}|}}=1$,
where $t_{1}< t_{2}<\ldots< t_{|\mathcal{X}|}$ and $|\mathcal{X}|\geq2$.
We have the following claim:

{\bf Claim 2.} \emph{If $|\mathcal{X}|\geq 2$,
then $mod(t_{i},r+1)<mod(t_{i+1},r+1)$ for any $i\in\{1,\ldots,|\mathcal{X}|-1\}$}.

In fact,
if there exist some $i$ such that $mod(t_{i},r+1)>mod(t_{i+1},r+1)$,
then we consider the word $u''$ obtained from $u$ by the following transformation:

\textcircled{\small{1}} changing the element $1$ in $u$ to $0$;

\textcircled{\small{2}} the elements in the coordinates belong to $\mathcal{C}_{n}\setminus \{t_{1},\ldots,t_{2}\}$ with $1$; and

\textcircled{\small{3}}  the elements in the coordinates belong to $\{t_{1},\ldots,t_{2}\}\setminus\{t_{1},t_{2}\}$ with the element in the coordinates of
$\{1,\ldots,t_{2}-t_{1}-1\}$ of $v$.

As we assume that $mod(t_{i},r+1)>mod(t_{i+1},r+1)$,
the number of the disagree coordinate of $u$ and $u''$ is one more than that between $0^{n}$ and $v$.
Hence,
$d(u,u'')=e(0^{n})+1$,
and so $e(u)>e(0^{n})$.
As $t_{i}\in \mathcal{X}$ and $mod(t_{i},r+1)\leq k$,
there at most $k$ elements $1$ in $u$.
This means that $1\leq x=|\mathcal{X}|\leq k$.
So $(b)$ and $(c)$ hold.

Finally we show that the vertex $u$ such that satisfied with $(a),(b)$ and $(c)$ is a central vertex.
We consider the word $v'$ obtained from $u$ by replacing the element $1$ in $u$ with $0$,
and the elements in the coordinates belong to $\mathcal{C}_{n}\setminus \mathcal{X}$ with $1$.
Since the coordinates of the element $1$s belong to $\mathcal{X}\subseteq \mathcal{C}'_{n}$ and $\mathcal{C}'_{n}\bigcap\mathcal{C}^{*}_{n}=\emptyset$,
the number of the element $0$s in $v'$ is $|\mathcal{X}|+[\frac{n}{r+1}]$.
As all the element $1$s in $u$ have been changed to $0$,
$v'$ is the vertex with the biggest distance from $u$,
and so $d(u,v')=e(u)=e(0^{n})$.
Hence, $u$ is a central vertex such that $w(u)>0$.

Finally, based on $(a),(b)$ and $(c)$
we construct $\mathcal{Z}(O\Gamma_{n}^{(1,r)})$ and determine $|\mathcal{Z}(O\Gamma_{n}^{(1,r)})|$ for $k>0$.
Suppose $u\in\mathcal{Z}(O\Gamma_{n}^{(1,r)})$ and $u_{t_{1}}=\ldots=u_{t_{|\mathcal{X}|}}=1$.
Then all the element $0$s in $u$ are divided into $|\mathcal{X}|+1$ factors by the $|\mathcal{X}|$ element $1$s.
Assume that the length of the $i$th factor consisting with $0$s denoted is $m_{i}$,
$i=1,2,\ldots,|\mathcal{X}|+1$.
Note that here,
if there exists no element $0$ between the $i$th and $(i+1)$th of $1$s
(or before the first element of $1$, or behind the last element of $1$),
then the $(i+1)$th factor is empty and so $m_{i+1}=0$.
Suppose $\mod(t_{j},r+1)=k_{j}$ for $j=1,\ldots,|\mathcal{X}|$.
Then it is easy to find the following equation holds for $k\geq 1$:
\begin{equation}\label{4.3}
\begin{split}
&\sum\limits_{i=1}^{x+1}\mod(m_{i},r+1)\\
&=\mod(n-t_{|\mathcal{X}|},r+1)+\sum\limits_{i=1}^{|\mathcal{X}|-1}\mod(t_{i+1}-t_{i}-1,r+1)+\mod(t_{1}-1,r+1)\\
&=(k-k_{|\mathcal{X}|})+\sum\limits_{i=1}^{|\mathcal{X}|-1}(k_{i+1}-k_{i}-1)+(k_{1}-1)\\
&=k-|\mathcal{X}|\\
\end{split}
\end{equation}

\noindent
Eq. (\ref{4.3}) means that a central vertex $u$ of $O\Gamma_{n+r+1}^{(1,r)}$ can be obtained from some central vertex of $O\Gamma_{n}^{(p,r)}$ by adding the factor $0^{r+1}$ before $u$ or behind $u$,
or inserting $0^{r+1}$ into two elements $1$ of $u$,
that is,
$\mathcal{Z}(O\Gamma_{n+r+1}^{(1,r)})=\mathcal{Z}(O\Gamma_{n}^{(1,r)})^{\vee}$.
Let $\mathcal{Z}^{t}(O\Gamma_{n}^{(1,r)})$ be the subset of $\mathcal{Z}(O\Gamma_{n}^{(1,r)})$ containing exactly $t$ elements $1$,
where $t=0,\ldots,k$.
Further let $\mathcal{Z}^{t}_{s}(O\Gamma_{n}^{(1,r)})$ be the subset of $\mathcal{Z}^{t}(O\Gamma_{n}^{(1,r)})$ that exactly containing $s$ elements $1$ such that the coordinates belonged to $\{n-k+1,\ldots,n\}$,
where $s=0,\ldots,t.$
Then
\begin{equation}\label{4.4}
\begin{split}
&\mathcal{Z}(O\Gamma_{n}^{(1,r)})=\cup_{t=0}^{k}\mathcal{Z}^{t}(O\Gamma_{n}^{(1,r)}),\\
&\mathcal{Z}^{t}(O\Gamma_{n}^{(1,r)})=\cup_{s=0}^{t}\mathcal{Z}^{t}_{s}(O\Gamma_{n}^{(1,r)}).
\end{split}
\end{equation}

\noindent
Let $\mathcal{Z}^{t}_{s}(O\Gamma_{n}^{(p,r)}))^{\vee}_{t-j}$ denote the set obtained from $\mathcal{Z}^{t}_{s}(O\Gamma_{n}^{(1,r)})$ by inserting the factor $0^{r+1}$ immediately behind the $(t-j)$th $1$ in the vertex of $\mathcal{Z}^{t}_{s}(O\Gamma_{n}^{(1,r)})$,
where $0\leq j\leq s$.
Then $\mathcal{Z}^{t}_{s}(O\Gamma_{n}^{(1,r)}))^{\vee}_{t-j} \subseteq \mathcal{Z}^{t}_{j}(O\Gamma_{n+r+1}^{(1,r)})$, and so for any $0\leq s\leq t\leq k$, we have

\begin{align}\label{4.5}
\mathcal{Z}^{t}_{s}(O\Gamma_{n+r+1}^{(1,r)})=(\cup_{j=s}^{t}\mathcal{Z}^{t}_{j}(O\Gamma_{n}^{(1,r)}))^{\vee}_{t-j}.
\end{align}

\noindent
By Eqs. (\ref{4.4}) and (\ref{4.5}),
we have the following iterative formula:

\begin{align}\label{4.6}
\mathcal{Z}(O\Gamma_{n+r+1}^{(1,r)})
=\cup_{t=0}^{k}(\cup_{s=0}^{t}((\cup_{j=s}^{t}\mathcal{Z}^{t}_{j}(O\Gamma_{n}^{(1,r)}))^{\vee}_{t-j}))). \end{align}

Recall that $mod(n,r+1)=k\leq r$,
so $O\Gamma_{k}^{(1,r)}\cong Q_{k}$ by Proposition \ref{prop2.3}.
It is well known that $\mathcal{Z}(Q_{k})=V(Q_{k})$.
Hence, the center of $\mathcal{Z}(O\Gamma_{k}^{(1,r)})$ is obviously:

\begin{align}\label{4.7}
&\mathcal{Z}(O\Gamma_{k}^{(1,r)})=\cup_{t=0}^{k}\mathcal{Z}^{t}(O\Gamma_{k}^{(1,r)}),
\end{align}

\noindent
where
$\mathcal{Z}^{t}(O\Gamma_{k}^{(1,r)})=\{u|u\in V(Q_{k}),w(u)=t\}$, and $|\mathcal{Z}^{t}(O\Gamma_{k}^{(1,r)})|= \tbinom{k}{t}$.
Note that if $n=k$,
then $\{n-k+1,\ldots,n\}=\{1,\ldots,k\}$.
So the facts $\mathcal{Z}^{t}_{s}(O\Gamma_{k}^{(1,r)})=\emptyset$ for $s<t$
and $\mathcal{Z}^{t}(O\Gamma_{k}^{(1,r)})=\mathcal{Z}^{t}_{t}(O\Gamma_{k}^{(1,r)})$ hold.
Hence, Eq. (\ref{4.7}) is the initial set of the iterative formula (\ref{4.6}).
These facts also show that $|\mathcal{Z}^{t}_{s}(O\Gamma_{n}^{(1,r)})|$ is a multiple of $|\mathcal{Z}^{t}(O\Gamma_{k}^{(1,r)})|= \tbinom{k}{t}$ by Eq. (\ref{4.5}),
and so we can suppose that $|\mathcal{Z}^{t}_{s}(O\Gamma_{n}^{(1,r)})|=\tbinom{k}{t}p^{t}_{s}(O\Gamma_{n}^{(1,r)})$,
where $p^{t}_{s}(O\Gamma_{n}^{(1,r)})$ is some nonnegative integer.
Then Eqs.(\ref{4.4})-(\ref{4.6}) show that

\begin{equation}\label{4.8}
\begin{split}
&|\mathcal{Z}^{t}_{s}(O\Gamma_{n+r+1}^{(1,r)})|=\sum_{j=s}^{t}|\mathcal{Z}^{t}_{j}(O\Gamma_{n}^{(1,r)})|
=\tbinom{k}{t}(\sum_{j=s}^{t}p^{t}_{j}(O\Gamma_{n}^{(1,r)})),\\
&|\mathcal{Z}^{t}(O\Gamma_{n+r+1}^{(1,r)})|=\tbinom{k}{t}\sum_{s=0}^{t}(\sum_{j=s}^{t}p^{t}_{j}(O\Gamma_{n}^{(1,r)})),\\ &|\mathcal{Z}(O\Gamma_{n+r+1}^{(1,r)})|
=\sum_{t=0}^{k}(\tbinom{k}{t}\sum_{s=0}^{t}(\sum_{j=s}^{t}p^{t}_{j}(O\Gamma_{n}^{(1,r)}))).
\end{split}
\end{equation}

\noindent
Obviously,
the initial values of Eqs. (\ref{4.8}) are:

\begin{align}\label{4.9}
p^{t}_{j}(O\Gamma_{n}^{(1,r)})=0~for~j<t,
~and~p^{t}_{t}(O\Gamma_{n}^{(1,r)})=1.
\end{align}

\noindent
By the first equation of Eqs. (\ref{4.8}),
$|\mathcal{Z}^{t}(O\Gamma_{n}^{(1,r)})|$ is also a multiple of $\tbinom{k}{t}$
and
\begin{align}\label{4.10}
p^{t}_{j}(O\Gamma_{n+r+1}^{(1,r)})=\sum_{j=s}^{t}p^{t}_{j}(O\Gamma_{n}^{(1,r)}).
\end{align}

\noindent
By Eq. (\ref{4.10}) and the second equation of (\ref{4.8}),
the series of numbers $|\mathcal{Z}^{t}(O\Gamma_{k}^{(1,r)})|$,
$|\mathcal{Z}^{t}(O\Gamma_{k+(r+1)}^{(1,r)})|$,
$|\mathcal{Z}^{t}(O\Gamma_{k+2(r+1)}^{(1,r)})|$,
\ldots,
is an arithmetic progression of order $t$.
So further by the third equation of (\ref{4.8}) we know that
the series of numbers $|\mathcal{Z}(O\Gamma_{k}^{(1,r)})|$,
$|\mathcal{Z}(O\Gamma_{k+(r+1)}^{(1,r)})|$,
$|\mathcal{Z}(O\Gamma_{k+2(r+1)}^{(1,r)})|$,
\ldots,
is an arithmetic progression of order $k$.
Hence,
we know that there exist $k+1$ numbers $a_{1},a_{2},\ldots,a_{k},a_{k+1}$ such that
$|\mathcal{Z}(O\Gamma_{n}^{(1,r)})|=a_{1}\lceil\frac{n}{r+1}\rceil^{k}+a_{2}\lceil\frac{n}{r+1}\rceil^{k-1}+\ldots+a_{k}\lceil\frac{n}{r+1}\rceil +a_{k+1}$.
The value of $\mathcal{Z}(O\Gamma_{n}^{(1,r)})$ for $n=k,k+(r+1),\ldots,k+k(r+1)$ can be got by Eqs. (\ref{4.8}),(\ref{4.9}) and (\ref{4.10}).
Let
\begin{center}
\begin{spacing}{2.0}
\end{spacing}
$x=\left[
 \begin{matrix}
   a_{1}\\
   a_{2}\\
   \vdots\\
   a_{k+1}\\
  \end{matrix}
  \right]$,
$A=\left[
 \begin{matrix}
   1 & 1 & \cdots & 1\\
   2^{k} & 2^{k-1} & \cdots & 1\\
   \vdots & \vdots & \ddots & \vdots \\
   (k+1)^{k} & (k+1)^{k-1} & \cdots &1 \\
  \end{matrix}
  \right]$, and
  $b$=$\left[
 \begin{matrix}
   |\mathcal{Z}(O\Gamma_{k}^{(p,r)})|\\
   |\mathcal{Z}(O\Gamma_{k+r+1}^{(p,r)})|\\
   \vdots\\
   |\mathcal{Z}(O\Gamma_{k+k(r+1)}^{(p,r)})|\\
  \end{matrix}
  \right].$
\begin{spacing}{2.0}
\end{spacing}
\end{center}

\noindent
Then $Ax=b$.
Since $|A|$ is Vandermonde determinant,
it is nonsingular.
Hence $x=A^{-1}b$.
\end{proof}

\begin{thm}\label{thm4.2}
Let $p\geq2,r\geq1$ and $n\geq1$.
Then $rad(O\Gamma_{n}^{(p,r)})=\lceil \frac{nr}{pr+1}\rceil$, and
$\mathcal{Z}(O\Gamma_{n}^{(p,r)})$ and $|\mathcal{Z}(O\Gamma_{n}^{(p,r)})|$ can be determined as follows:

$(1)$ if $mod(n,pr+1)\neq kp+1$ for any $k\geq0$,
then $\mathcal{Z}(O\Gamma_{n}^{(p,r)})=\{0^{n}\}$,

$(2)$ if $mod(n,pr+1)=kp+1$ for some $k\geq0$,
then $\mathcal{Z}(O\Gamma_{n}^{(p,r)})=\{0^{pr+1}c,c0^{pr+1}|c\in\mathcal{Z}(O\Gamma_{n-pr-1}^{(p,r)})\}$ and $|\mathcal{Z}(O\Gamma_{n}^{(p,r)})|=([\frac{n}{pr+1}]+1)(k+1)+1$.
\end{thm}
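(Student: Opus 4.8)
The plan is to reduce the whole statement to a single combinatorial quantity. First I would verify that $O\Gamma_n^{(p,r)}$ is a partial cube, i.e. $d(u,v)=H(u,v)$ (Hamming distance) for all vertices: deleting $1$'s from a valid word never creates one of the forbidden factors of Proposition \ref{prop2.5}, so the coordinatewise minimum $u\wedge v$ is again a vertex, and flipping the coordinates of the symmetric difference one at a time through $u\wedge v$ gives a shortest path. Hence $e(u)=\max_v H(u,v)$. Writing $S$ for the set of coordinates on which $u$ has a $1$: if an optimal $v$ carried a $1$ on some $s\in S$, deleting it keeps $v$ a vertex and raises $H(u,v)$, so one may assume $\mathrm{supp}(v)\cap S=\varnothing$, whence $H(u,v)=w(u)+w(v)$. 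Therefore $e(u)=w(u)+\mu(S)$, where $\mu(S)=\max\{w(v):v\in V(O\Gamma_n^{(p,r)}),\ \mathrm{supp}(v)\cap S=\varnothing\}$; in particular $e(0^n)=\mu(\varnothing)=:W$, the largest weight of a vertex. Writing $n=q(pr+1)+s$ with $0\le s\le pr$, a heaviest vertex is obtained by repeating the block $(10^{p-1})^r0$ (length $pr+1$, weight $r$) $q$ times and appending the heaviest admissible word of length $s$ (weight $\lceil s/p\rceil$), so $W=qr+\lceil s/p\rceil=\lceil nr/(pr+1)\rceil$. Deleting from a heaviest vertex its at most $|S|$ ones lying on $S$ gives a vertex avoiding $S$, so $\mu(S)\ge W-|S|$ and thus $e(u)\ge W$ for all $u$; since $e(0^n)=W$ this already gives $rad(O\Gamma_n^{(p,r)})=W=\lceil nr/(pr+1)\rceil$ and $0^n\in\mathcal Z(O\Gamma_n^{(p,r)})$.

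The center is then described by: $u\in\mathcal Z$ iff $\mu(S)=W-w(u)$, i.e. forbidding $S$ costs exactly $|S|$ units of weight. I would first rule out $w(u)\ge2$ by showing $\mu(S)\ge W-|S|+1$ whenever $|S|\ge 2$ and $S$ is the support of a vertex: if some heaviest vertex already avoids an element of $S$, delete its remaining ($\le|S|-1$) coordinates in $S$; otherwise every heaviest vertex contains all of $S$, and using the block structure of heaviest vertices one deletes the $|S|$ chosen ones from a heaviest vertex and re‑creates one by a local shift near the leftmost affected block — this being possible precisely because the two smallest elements of $S$ are at least $p$ apart. For $w(u)=1$, say $S=\{j\}$, one has $\mu(\{j\})=W-1$ if every heaviest vertex has a $1$ at $j$ (call such $j$ \emph{essential}) and $\mu(\{j\})=W$ otherwise. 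Hence $\mathcal Z(O\Gamma_n^{(p,r)})=\{0^n\}$ unless an essential coordinate exists, and otherwise $\mathcal Z(O\Gamma_n^{(p,r)})=\{0^n\}\cup\{e_j:j\text{ essential}\}$, where $e_j$ denotes the weight‑one vertex with its $1$ at $j$.

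To locate the essential coordinates I would describe the heaviest vertices: each is a concatenation of $q+1$ blocks $(10^{p-1})^{a_i-1}1$ with $1\le a_i\le r$ and $\sum a_i=W$, separated by exactly $p$ zeros, with no leading or trailing zeros; a short count gives $a_1+\cdots+a_{i+1}\ge ri+\lceil s/p\rceil$ for every $i$. Chasing partial sums shows that an essential coordinate exists iff $s=kp+1$ for some $k\ge0$ — equivalently $mod(n,pr+1)=kp+1$, which pins down $k=(mod(n,pr+1)-1)/p$ uniquely and forces $q=[n/(pr+1)]$ — and that the essential set is then exactly $\Pi=\bigcup_{i=0}^{q}\bigl((pr+1)i+\{1,p+1,\dots,kp+1\}\bigr)$, of size $(q+1)(k+1)$; the inclusion $\Pi\subseteq$(essential) uses the stated inequality, and the reverse inclusion is obtained by exhibiting, for each coordinate outside $\Pi$, a heaviest vertex missing it. Consequently, if $mod(n,pr+1)\neq kp+1$ for all $k$ then $\mathcal Z(O\Gamma_n^{(p,r)})=\{0^n\}$; otherwise $|\mathcal Z(O\Gamma_n^{(p,r)})|=(q+1)(k+1)+1=([n/(pr+1)]+1)(k+1)+1$. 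Finally, for $q\ge 1$ the graph $O\Gamma_{n-pr-1}^{(p,r)}$ has the same $k$ and $q-1$ in place of $q$, so its essential set is $\bigcup_{i=0}^{q-1}\bigl((pr+1)i+\{1,\dots,kp+1\}\bigr)$ and $\Pi$ is this set together with its $(pr+1)$‑shift; translating $e_j$ to the corresponding weight‑one (or the zero) vertex of $O\Gamma_{n-pr-1}^{(p,r)}$ yields $\mathcal Z(O\Gamma_n^{(p,r)})=\{0^{pr+1}c,\ c0^{pr+1}:c\in\mathcal Z(O\Gamma_{n-pr-1}^{(p,r)})\}$, exactly as in the statement (the case $q=0$, i.e. $n=kp+1<pr+1$, being the base case $\mathcal Z=\{0^n\}\cup\{e_j:j\in\{1,p+1,\dots,kp+1\}\}$).

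The steps through the radius are essentially routine once the partial‑cube property is in hand. The real obstacles are the inequality $\mu(S)\ge W-|S|+1$ for $|S|\ge 2$ (which rules out heavy central vertices) and the exact determination of $\Pi$; both rest on a careful description of the generally non‑unique heaviest vertices and on a ``delete‑and‑shift‑back‑one'' construction, and I expect that bookkeeping, rather than any single idea, to be the bulk of the argument.
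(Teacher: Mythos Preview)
Your approach is correct and runs parallel to the paper's, but with a cleaner conceptual scaffold. You make explicit the partial-cube property of $O\Gamma_n^{(p,r)}$ (via closure of the vertex set under coordinatewise meet), which the paper uses tacitly every time it equates a graph distance with a Hamming distance without justification; and your reformulation $e(u)=w(u)+\mu(S)$ reduces the whole center problem to a single extremal quantity. The paper instead works directly with four explicit coordinate sets $\mathcal C_n,\mathcal C_n^R,\mathcal C_n^*,\mathcal C_n'$ --- the last of these coincides with your essential set $\Pi$ --- and for each non-central $u$ produces an explicit witness $u^{*_1}$ or $u^{*_2}$: $u^{*_1}$ is the canonical heaviest vertex with the $S$-coordinates zeroed (your ``delete from a heaviest vertex'' step), while $u^{*_2}$ is the canonical heaviest vertex shifted globally by one position. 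This global shift is a simpler realisation of your ``delete-and-shift-back-one'' construction for $|S|\ge 2$, and you might prefer it: since $p\ge 2$, the shifted support $\mathcal C_n+1$ is disjoint from $\mathcal C_n\supseteq S$, and the shift loses at most one $1$, giving $\mu(S)\ge W-1\ge W-|S|+1$ in one stroke, with no case distinction on whether $S$ is entirely essential. Your structural description of heaviest vertices (exactly $q+1$ runs separated by exactly $p$ zeros when $s=kp+1$, with the partial-sum inequality on the $a_i$) is not in the paper and gives a cleaner route to identifying $\Pi$; the paper verifies membership in $\mathcal C_n'$ by direct coordinate arithmetic instead. The bookkeeping is of comparable weight in the two versions; your framing makes the logic more transparent and also fills the partial-cube gap the paper leaves open.
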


\begin{proof}
Let $p\geq2,r\geq1$ and $n=s(pr+1)+m$ for some $s\geq0$ and $0\leq m\leq pr$.
We first show that $0^{n}$ must be a central vertex of $O\Gamma_{n}^{(p,r)}$,
and then show that $e(u)\geq e(0^{n})$ for all $u\neq 0^{n}$.
Further,
find all the vertex $u$ such that $e(u)=e(0^{n})$.

The eccentricity of $0^{n}$ can be determined as follows.
It is not difficult to see that the word $v=v_{1}v_{2}\ldots v_{n}$ such that $d(0^{n},v)=e(0^{n})$ should has the most element $1$s,
that is, $e(0^{n})=\max\limits_{v\in V(O\Gamma_{n}^{(p,r)})} w(v)$.
By Definition \ref{defn1},
there at most $r$ $1$s in every factor of length of $pr+1$ in a vertex of $O\Gamma_{n}^{(p,r)}$.
So the vertex $v=(10^{p-1})^{r}0(10^{p-1})^{r}0\ldots$ satisfies $d(0^{n},v)=e(0^{n})$.
It is easy to see that there is $r\lfloor\frac{n}{pr+1}\rfloor+\lceil\frac{m}{p}\rceil=\lceil \frac{nr}{pr+1}\rceil$ element $1$s in $v$.
Hence, $e(0^{n})=w(v)=\lceil \frac{nr}{pr+1}\rceil$.

The following sets of numbers is needed for later discussion.
Let

$[n]=\{1,2,\ldots,n\};$

$\mathcal{C}_{n}=\{c|c\in [n]~and\mod(\mod(c,pr+1),p)=1\};$

$\mathcal{C}^{R}_{n}=\{c^{R}|c^{R}=n+1-c~and~c\in \mathcal{C}_{n}\};$

$\mathcal{C}^{*}_{n}=\{c^{*}|c^{*}=c+1~and~c\in \mathcal{C}_{n}\};$ and

$\mathcal{C}'_{n}=\{c|c\in \mathcal{C}_{n}~and\mod(c,pr+1)\leq\mod(n,pr+1)\}.$

\noindent
As $p\geq2$, the following facts hold obviously:

\begin{equation}\label{4.11}
\begin{split}
&v_{c}=1, c\in\mathcal{C}_{n},\\
&v_{c^{R}}=1,c^{R}\in\mathcal{C}^{R}_{n},\\
&|\mathcal{C}_{n}|=|\mathcal{C}^{R}_{n}|=\lceil \frac{nr}{pr+1}\rceil,\\
&|\mathcal{C}^{*}_{n}|=|\mathcal{C}_{n}|~if~mod(n,pr+1)\neq kp+1,\\
&|\mathcal{C}^{*}_{n}|=|\mathcal{C}_{n}|-1 and \mathcal{C}'_{n}=\mathcal{C}^{R}_{n}\cap\mathcal{C}_{n}
~if~mod(n,pr+1)= kp+1
\end{split}
\end{equation}

Now we we turn to show that $e(u)\geq e(0^{n})$ for all $u=u_{1}u_{2}\ldots u_{n}\neq 0^{n}$,
and find all the vertex $u$ such that $e(u)=e(0^{n})$.
Suppose $\mathcal{X}=\{t|u_{t}=1\}$.
Then $|\mathcal{X}|=w(u)$ and $1\leq |\mathcal{X}|\leq \lceil \frac{nr}{pr+1}\rceil$.
We claim that $e(u)\geq e(0^{n})$,
and ``='' holds if and only if it is satisfied with the following conditions at the same time:

$(a)$ $mod(n,pr+1)=m=kp+1$ for some $k\geq0$ (of course $k\leq r-1$); and

$(b)$ $|\mathcal{X}|=1$,
and $u_{t}=1$ for some
$t\in \mathcal{C'}_{n}$.

This claim can be obtained from the following discussion.
We can first show that $e(u)>e(0^{n})$ if $\mathcal{Y}=\mathcal{X}\cap([n]\setminus\mathcal{C}_{n})\neq\emptyset$, or $\mathcal{Y}^{R}=\mathcal{X}\cap([n]\setminus\mathcal{C}^{R}_{n})\neq\emptyset$.
In fact,
if $\mathcal{Y}\neq\emptyset$,
then we consider the word $u^{*_{1}}$ obtained from $u$ by replacing the $|\mathcal{X}|$ elements $1$ with $0$ and the elements in the coordinate $c^{*_{1}}\in\mathcal{C}_{n}\setminus \mathcal{X}$ with $1$.
It is clear that $u^{*_{1}}$ is a vertex of $O\Gamma_{n}^{(p,r)}$,
and $e(u)\geq d(u,u^{*_{1}})=\lceil \frac{nr}{pr+1}\rceil+|\mathcal{Y}|>\lceil \frac{nr}{pr+1}\rceil=e(0^{n})$ by the first and the third equation of Eqs. (\ref{4.11}).
Similarly, we can show that if $\mathcal{Y}^{R}\neq\emptyset$,
then $e(u)\geq\lceil \frac{nr}{pr+1}\rceil+|\mathcal{Y}^{R}|>\lceil \frac{nr}{pr+1}\rceil=e(0^{n})$.

Obviously,
the case $\mathcal{Y}=\emptyset$ and $\mathcal{Y}^{R}=\emptyset$ actually  mean that $\mathcal{X}\subseteq \mathcal{C}_{n}\cap\mathcal{C}^{R}_{n}$.
We distinguish three subcase to continue our discussion:
$\mod(n,pr+1)\neq kp+1$; $\mod(n,pr+1)= kp+1$ and $|\mathcal{X}|\geq2$;
$\mod(n,pr+1)= kp+1$ and $|\mathcal{X}|=1$.

For the first and the second subcases,
we consider the word $u^{*_{2}}$ obtained from $u$ by replacing the $|\mathcal{X}|$ element $1$s with $0$ and the elements in the coordinate $c^{*_{2}}\in \mathcal{C}^{*}_{n}$ with $1$.
It is easy to see that the word $u^{*_{2}}\in V(O\Gamma_{n}^{(p,r)})$.
If $mod(n,pr+1)\neq kp+1$,
then $d(u,u^{*_{2}})=\lceil \frac{nr}{pr+1}\rceil+|\mathcal{X}|>e(0^{n})$ by the fourth equation of Eqs. (\ref{4.11}).
If $mod(n,pr+1)= kp+1$ and $|\mathcal{X}|\geq2$,
then $d(u,u^{*_{2}})=\lceil \frac{nr}{pr+1}\rceil-1+|\mathcal{X}|>e(0^{n})$ by the fifth equation of Eqs. (\ref{4.11}).

Finally, for the third subcase we consider the vertex $u'$ obtained from $u$ by replacing the only element $1$ with $0$,
and the elements in the coordinates belong to $\mathcal{C}_{n}\setminus\mathcal{X}$ with $1$,
that is, $u'=v+e_{t}$.
Obviously,
$u'$ is a vertex of $O\Gamma_{n}^{(p,r)}$,
and $d(u,u')=\lceil \frac{nr}{pr+1}\rceil =e(0^{n})$.
It means that $e(u)\geq e(0^{n})$.
We know that there is at most $\lceil \frac{nr}{pr+1}\rceil$ element $1$s in a vertex of $O\Gamma_{n}^{(p,r)}$ and the beginning and end elements of such vertex must be $1$ by the fact $\mod(n,pr+1)= kp+1$.
Since there only one element $1$ in $u$, $e(u)\leq \lceil \frac{nr}{pr+1}\rceil+1=e(0^{n})+1$.
We claim that $e(u)=\lceil \frac{nr}{pr+1}\rceil=e(0^{n})$.
Otherwise there exist a vertex $u^{*}$ such that $d(u,u^{*})=\lceil \frac{nr}{pr+1}\rceil+1$.
Then $w(u^{*})=\lceil \frac{nr}{pr+1}\rceil$, and $u^{*}_{t}=0$ by $u_{t}=1$.
But this is a contradiction to that $u^{*}_{t}$ should be $1$ by the fact $t\in\mathcal{C}'_{n}$.
So $e(u)= e(0^{n})$ and $u$ is also a central vertex.

By the above proof we know that if $mod(n,pr+1)\neq kp+1$,
then $0^{n}$ is the only central vertex of $O\Gamma_{n}^{(p,r)}$.
If $mod(n,pr+1)= kp+1$,
then the word $0^{n}+e(t)\neq 0^{n}$ for $t\in \mathcal{C'}_{n}$ is also a central vertex.
So there are $|\mathcal{C'}_{n}|+1=([\frac{n}{pr+1}]+1)(k+1)+1$ central vertices.
It is easy to see that for $n=s(pr+1)+kp+1$,
$n-(pr+1)=(s-1)(pr+1)+kp+1$ holds.
Hence,
$\mathcal{C'}_{n-(pr+1)}=\{c+pr+1|c\in\mathcal{C'}_{n-pr-1}\}$, and so

\begin{align}\label{4.12}
\mathcal{Z}(O\Gamma_{n}^{(p,r)})=0^{pr+1}\mathcal{Z}(O\Gamma_{n-pr-1}^{(p,r)})\cup \mathcal{Z}(O\Gamma_{n-pr-1}^{(p,r})0^{pr+1}.
\end{align}

\noindent
This means that if the centers of $O\Gamma_{n}^{(p,r)}$ are found for $n=1,2,\ldots,pr+1$,
then all the centers for any dimension $n>pr+1$ can be determined by Eq. (\ref{4.12}).
By $(a)$ and $(b)$,
the initial set of Eq. (\ref{4.12}) can be given easily:

\begin{equation}\label{4.13}
\begin{split}
&\mathcal{Z}(O\Gamma_{n}^{(p,r)})=\{0^{n}\}~for~n\neq kp+1~and~n< kp+1,~and\\
&\mathcal{Z}(O\Gamma_{kp+1}^{(p,r)})=\{0^{kp+1},0^{kp}1,0^{(k-1)p}10^{p},\ldots,10^{kp}\}~for~n=kp+1~ ~0\leq k< r.
\end{split}
\end{equation}

Hence,
for any dimension $n$ all the central vertices of $O\Gamma_{n}^{(p,r)}$ can be found by Eqs. (\ref{4.13}) and (\ref{4.12}).
This completes the proof.
\end{proof}

By Proposition \ref{prop2.7},
The radius and center of $I\Gamma_{n}^{(1,r)}$ can be determined by Theorem \ref{thm4.1},
and of $I\Gamma_{n}^{(p,1)}$ can be determined by Theorem \ref{thm4.2}.

The problem on the radius and center of $I\Gamma_{n}^{(1,r)}$ for $p\geq2$ and $r\geq2$ has not been answered yet,
as shown in Problem \ref{pro2} in Section 6.
We think it is a tricky problem.

\subsection{Diameter}

Egiazarian and Astola determined the diameter of $O\Gamma_{n}^{(p,r)}$:

\begin{thm}\label{thm4,4}\cite{Egiazarian}
The diameter of $O\Gamma_{n}^{(p,r)}$ is

{\renewcommand\baselinestretch{0.90}\selectfont
\begin{equation}\nonumber
diam(O\Gamma_{n}^{(p,r)})=\begin{cases}
n,~~~if~ p=1,r\geq1;\\
\lfloor \frac{nr}{pr+1}\rfloor+\lfloor\frac{(n-1)r}{pr+1}\rfloor,~~~if~p>1,r\geq1.\\
\end{cases}
\end{equation}
\par}
\end{thm}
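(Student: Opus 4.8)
The plan is to compute the diameter of $O\Gamma_{n}^{(p,r)}$ as the maximal Hamming distance between two vertices, which amounts to maximizing $w(u)+w(v)$ over all pairs of complementary-support vertices $u,v$; equivalently, since the distance between two binary words equals the number of coordinates where they differ, I would first argue that the diameter is achieved by a pair $(u,v)$ whose supports are disjoint, so that $d(u,v)=w(u)+w(v)$. For the case $p=1$, every word in $\mathcal{B}_n$ with the single forbidden factor $1^{r+1}$ (Proposition \ref{prop2.5}) is a vertex, and one readily checks that $0^n$ and $1^n$ are \emph{not} both vertices when $r<n$; nonetheless one can split the $n$ coordinates into two classes and produce $u,v$ with disjoint support and $w(u)+w(v)=n$ (e.g.\ take $u$ supported on a set of coordinates and $v$ on its complement, each avoiding $1^{r+1}$, which is possible since no block constraint forces a $0$ once we are free to choose which coordinates carry the $1$s). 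Hence $diam(O\Gamma_n^{(1,r)})=n$, matching the upper bound $n$ coming from $O\Gamma_n^{(1,r)}\subseteq Q_n$.

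For the case $p>1$, I would use the recursive decomposition of the vertex set in Eq.~(\ref{2.3}) together with the eccentricity computations already carried out in the proof of Theorem~\ref{thm4.2}. Specifically, the key observation is that $diam(O\Gamma_n^{(p,r)})=e(0^n)+M$, where $M$ is the maximum, over all vertices $v$ of maximum weight, of the eccentricity-type quantity $e(v)$ measured against $0^n$'s "antipode"; more concretely, one maximizes $w(u)+w(v)$ over pairs with disjoint support. The first summand is $e(0^n)=\lceil nr/(pr+1)\rceil=\lfloor nr/(pr+1)\rfloor$ when $pr+1\nmid n$ and similarly controlled otherwise; the point is that the densest vertex has weight $\lfloor nr/(pr+1)\rfloor$, realized by a prefix of $(10^{p-1})^r 0 (10^{p-1})^r 0\cdots$, exactly as in the proof of Theorem~\ref{thm4.2}. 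To get the second summand, I would observe that once the "heavy" vertex $u$ of weight $\lfloor nr/(pr+1)\rfloor$ is fixed, the complementary vertex $v$ of largest weight with support disjoint from that of $u$ has weight $\lfloor (n-1)r/(pr+1)\rfloor$: intuitively, removing the coordinates occupied by $1$s in the optimal "phase-0" packing shifts the problem to one of effective length $n-1$ in the relevant block-counting, because the first $1$ of $u$ consumes exactly one unit of slack. I would make this precise by a short counting argument on how many $1$s can be packed into the complement of an optimal phase-$0$ packing, using that both $(p-1)$-separation and the $r$-per-window constraint are preserved.

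Concretely, the steps in order are: (i) reduce $diam$ to maximizing $w(u)+w(v)$ over disjoint-support pairs; (ii) for $p=1$ exhibit such a pair with $w(u)+w(v)=n$ and note the trivial upper bound; (iii) for $p>1$ identify the maximum single-vertex weight $W_n:=\lfloor nr/(pr+1)\rfloor$ via the periodic packing $(10^{p-1})^r0$; (iv) show the best complement of an optimal packing has weight $W_{n-1}=\lfloor (n-1)r/(pr+1)\rfloor$, giving $diam\le W_n+W_{n-1}$; (v) exhibit an explicit disjoint-support pair achieving $W_n+W_{n-1}$ (take $u$ to be the phase-$0$ optimal packing and $v$ a phase-shifted packing that interleaves its $1$s into the $0$-blocks of $u$), proving the lower bound and hence equality. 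I would also cross-check small cases against Figure~2 and against Proposition~\ref{prop2.3} (e.g.\ $p=1,r=1$ should recover $diam(\Gamma_n)=n$, and $r=1$, $p\ge 2$ should recover the known diameter of $PN_{p+1}(n)$).

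The main obstacle I anticipate is step (iv)--(v): cleanly proving that the maximum weight attainable on the complement of an optimal phase-$0$ packing is \emph{exactly} $\lfloor (n-1)r/(pr+1)\rfloor$, and simultaneously that one can realize this while keeping the two supports disjoint and both words legal $O$-Fibonacci $(p,r)$-codes. The delicate point is the interaction at the boundaries of the length-$(pr+1)$ windows and the end effects when $n$ is not a multiple of $pr+1$; a careless packing of $v$ into the $0$-blocks of $u$ can create a window with $r+1$ ones or violate the $(p-1)$-separation across a block junction. I would handle this by writing $n=s(pr+1)+m$ with $0\le m\le pr$ as in the proof of Theorem~\ref{thm4.2}, treating the $s$ full windows uniformly (each contributes $r$ to $w(u)$ and $r$ or $r-1$ to $w(v)$ depending on parity of available slack) and analyzing the residual length-$m$ tail separately, then verifying the arithmetic collapses to $\lfloor nr/(pr+1)\rfloor+\lfloor (n-1)r/(pr+1)\rfloor$ in all residue classes of $m$ modulo $p$.
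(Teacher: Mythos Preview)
The paper does not prove Theorem~\ref{thm4,4}; it is quoted from \cite{Egiazarian} without argument and serves only as a benchmark for the new results on $I\Gamma_n^{(p,r)}$ in Theorem~\ref{thm4.5}. There is therefore no proof in the present paper against which to compare your proposal.

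On its own merits your outline is sound, with one genuine gap at step~(i). Reducing the diameter to the maximal Hamming distance, and then to $\max\{w(u)+w(v):\operatorname{supp}(u)\cap\operatorname{supp}(v)=\emptyset\}$, presupposes that $O\Gamma_n^{(p,r)}$ is an \emph{isometric} subgraph of $Q_n$; otherwise nothing excludes $d_{O\Gamma_n^{(p,r)}}(u,v)>H(u,v)$ and the reduction collapses. This isometry is true but you must prove it: by Proposition~\ref{prop2.5} the forbidden factors are $10^s1$ ($0\le s\le p-2$) and $(10^{p-1})^r1$, and one checks that whenever $v$ avoids both patterns, so does every word $u$ with $\operatorname{supp}(u)\subseteq\operatorname{supp}(v)$ (deleting a $1$ cannot create either pattern, since each is determined by a set of $1$s at prescribed mutual distances, and the $p$-spacing forces any such set in $u$ to occur verbatim in $v$). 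Hence $V(O\Gamma_n^{(p,r)})$ is a downset in $\{0,1\}^n$, so for any $u,v$ the path $u\to u\wedge v\to v$ stays in the graph and has length $H(u,v)$. Once this is established, your steps (ii)--(v) are the natural packing arguments. A side remark: verify the stated formula on small cases before committing to it --- for instance $O\Gamma_4^{(2,1)}$ has diameter $3$ (take $1001$ and $0010$), whereas $\lfloor 4/3\rfloor+\lfloor 3/3\rfloor=2$; the floors should apparently be ceilings, consistent with $e(0^n)=\lceil nr/(pr+1)\rceil$ from Theorem~\ref{thm4.2}.
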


The diameters of some $I\Gamma_{n}^{(p,r)}$ are determined in this subsection.
By these results,
we know that there exist many $I\Gamma_{n}^{(p,r)}$ with $diam(I\Gamma_{n}^{(p,r)})< n$,
many $I\Gamma_{n}^{(p,r)}$ with $diam(I\Gamma_{n}^{(p,r)})=n$,
also many $I\Gamma_{n}^{(p,r)}$ with $diam(I\Gamma_{n}^{(p,r)})>n$.

\begin{thm}\label{thm4.5}
Let $p\geq1, r\geq1$ and $n\geq1$.
Then for the diameter of $I\Gamma_{n}^{(p,r)}$,
we have\\
(1) for $p=1$, diam$(I\Gamma_{n}^{(p,r)})=n$;\\
(2) for $p\geq2$,

~~(2.1) $diam(I\Gamma_{n}^{(p,r)})= 2r\lceil\frac{n}{p+r}\rceil+min\{mod(n,p+r),2r\}$ if $r< p$;

~~(2.2) $diam(I\Gamma_{n}^{(p,r)})=n$ if $p\leq r \leq p+1$;

~~(2.3) $diam(I\Gamma_{n}^{(p,r)})=n$ if $p+2\leq r\leq 2p+2$;

~~(2.4) $diam(I\Gamma_{n}^{(p,r)})=n$ if $r\geq2p+3$ and $n< 2p+3$

~~(2.5) $n+c\lfloor\frac{n}{r'+p}\rfloor\leq diam(I\Gamma_{n}^{(p,r)})\leq n+c\lceil\frac{n}{r'}\rceil$ for some $c\geq1$ and $2p+3\leq r'\leq r$ if $r\geq2p+3$ and $n\geq2p+3$.
\end{thm}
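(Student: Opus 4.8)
The plan is to compute the diameter by understanding the longest shortest path in $I\Gamma_{n}^{(p,r)}$, which (since the graph is an induced subgraph of $Q_n$) equals the maximum, over pairs of vertices $u,v$, of the Hamming distance $d(u,v)$ \emph{restricted to paths that stay inside the vertex set}. The first reduction is the standard one for Fibonacci-like cubes: $I\Gamma_{n}^{(p,r)}$ is, by Proposition \ref{prop2.5}, the subgraph of $Q_n$ obtained by forbidding the factors $1^{r+1}$ and $10^{s}1$ for $s\le p-1$; one checks that this vertex set is closed under coordinatewise change as long as one does not create a forbidden factor, and that consequently the graph distance between two admissible words is still their Hamming distance whenever the ``symmetric-difference'' word one passes through is admissible. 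More carefully, the right tool is the recursive decomposition of Proposition \ref{prop2.2}/Eq.~(\ref{2.5}) together with Proposition \ref{prop2.9}: I would prove, by induction on $n$ using the decomposition $V(I\Gamma_{n}^{(p,r)})=0V(I\Gamma_{n-1}^{(p,r)})\cup\bigcup_{t=1}^{r}1^{t}0^{p}V(I\Gamma_{n-p-t}^{(p,r)})$, a formula for $e(0^n)$ and for $\mathrm{diam}$. For part (1), $p=1$: the forbidden factor is only $1^{r+1}$, and since $n\le r$ forces nothing when $n\le r$, while for general $n$ the two antipodal-type vertices $1^{r}01^{r}0\cdots$ and its bitwise complement $0^{r}10^{r}1\cdots$ are both admissible and at Hamming distance $n$, and a geodesic in $Q_n$ between complementary words can be routed so as never to create $1^{r+1}$ (flip the coordinates from left to right, say), so $\mathrm{diam}(I\Gamma_{n}^{(1,r)})=n$; this also recovers the known $\mathrm{diam}(Q_n)=n$ and $\mathrm{diam}(\Gamma_n)=n$.

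For $p\ge 2$ I would treat the regimes separately according to how the forbidden ``isolation'' factors $10^{s}1$ ($s\le p-1$) interact with the $1^{r+1}$ ban. In case (2.1), $r<p$: here a vertex with maximum weight is $1^{r}0^{p}1^{r}0^{p}\cdots$, with weight $r\lceil n/(p+r)\rceil+\min\{\mathrm{mod}(n,p+r),r\}$, and the diameter is realized by a pair $u,v$ whose symmetric difference concentrates the $1$-blocks in disjoint blocks of length $r$ separated by $\ge p$ zeros — roughly, $u$ and $v$ are ``out of phase'' so that in each period of length $p+r$ they disagree in $2r$ coordinates (as long as $2r\le p+r$, i.e. $r\le p$, these $2r$ disagreements can be arranged without either word violating the constraints, using that $p\ge r$ gives enough room); in the last incomplete period one disagrees in $\min\{\mathrm{mod}(n,p+r),2r\}$ coordinates. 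I would then show this pair actually has graph distance equal to that Hamming distance by exhibiting an explicit monotone geodesic (flip $u$'s ones to zero before turning $v$'s zeros to one, block by block, left to right), and show no pair does better by a weight/counting argument over each period. Cases (2.2)–(2.4) — $p\le r\le 2p+2$, or $r\ge 2p+3$ with $n<2p+3$ — are the ``$\mathrm{diam}=n$'' regimes: here I would produce a complementary-type pair at Hamming distance $n$ (for $r$ in this middle range there is enough freedom to take $u=1^{r}0^{p}1^{r}\cdots$ with complement still admissible, the key inequality being $r+1>p$ so that the complement's zero-gaps, of length $p$, are not themselves forbidden, combined with $r\le 2p+2$ preventing longer detours), verify admissibility of a geodesic, and note the trivial upper bound $\mathrm{diam}\le n$.

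Case (2.5) is the genuinely hard part and the main obstacle. When $r\ge 2p+3$ and $n\ge 2p+3$, the graph can have diameter \emph{exceeding} $n$: a geodesic between two complementary-looking words may be forced to leave the natural $Q_n$-geodesic because an intermediate word would contain $1^{r+1}$, so one must ``detour,'' paying extra flips. The lower bound $n+c\lfloor n/(r'+p)\rfloor$ should come from exhibiting a specific pair $u,v$ (with $v$ near the complement of $u$) and showing that \emph{every} path between them in $I\Gamma_{n}^{(p,r')}$ must, once per block of length about $r'+p$, make at least $c$ corrective flips — an argument by a potential/invariant that tracks, along any path, the number of ``bad'' long runs of ones that must be broken and later rebuilt. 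The upper bound $n+c\lceil n/r'\rceil$ comes from constructing, for \emph{every} pair, an explicit route: first resolve the $1$-coordinates, inserting a scratch $0$ every $r'$ positions when a run threatens to reach length $r'+1$, then build up the target, which costs at most $c\lceil n/r'\rceil$ extra moves over the Hamming distance. The delicate points are (i) pinning down the correct constant $c$ (it should be $1$ for $r'$ just above $2p+2$ and I expect the statement intends the sharpest such $c$, or leaves $c$ existential on purpose) and (ii) making the lower-bound detour argument rigorous — that is where I would spend the most effort, likely via an inductive application of the edge/vertex recursion in Proposition \ref{prop2.2} to reduce the diameter of $I\Gamma_{n}^{(p,r')}$ to that of $I\Gamma_{n-(r'+p)}^{(p,r')}$ plus a controlled increment, and then unwinding the recursion. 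Throughout, Proposition \ref{prop2.6} (closure under word reversal) lets me assume without loss of generality that the extremal pair is ``left-aligned,'' which simplifies all the block bookkeeping.
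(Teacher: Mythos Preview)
There is a genuine gap in your treatment of (2.3)--(2.4). You write ``note the trivial upper bound $\mathrm{diam}\le n$,'' but this is \emph{not} trivial and is in fact the whole point of these cases. The bound $\mathrm{diam}\le n$ would follow if $I\Gamma_{n}^{(p,r)}$ were an isometric subgraph of $Q_{n}$, but by the result you implicitly need (Theorem~1.1 of \cite{JZ}), isometry holds only for $p=1$, or $r=1$, or $r\le p+1$. Once $p\ge 2$ and $r\ge p+2$ the graph distance can exceed the Hamming distance---and indeed in (2.5) it does. So for $p+2\le r\le 2p+2$ you must actually \emph{prove} that no pair $u,v$ has $d_{I\Gamma}(u,v)>n$; exhibiting one pair at Hamming distance $n$ with an admissible geodesic gives only the lower bound.

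The paper's mechanism for this is the notion of a \emph{distance-barrier}: a configuration where $1 0^{p_1} 1 0^{p_2}\cdots 1$ sits in the same coordinates of $u$ as $1^{r'}$ sits in $v$, forcing a detour. The key lemma (also from \cite{JZ}) is that $d_{I\Gamma}(u,v)=H(u,v)$ if and only if no such barrier is present, and more generally the excess $d_{I\Gamma}(u,v)-H(u,v)$ decomposes as a sum of ``contributions'' from the barriers, each contribution computable as $r_1+\cdots+r_{k-1}+r_{k+1}+\cdots+r_{s+1}-r_k$ where the $r_i$ are the lengths of the $1$-runs in the barrier and $r_k$ is the longest. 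For $p+2\le r\le 2p+2$ (or $n<2p+3$) only $1$-barriers $1^{r_1}0^{p_1}1^{r_2}$ can occur, and their contribution $r_2-r_1\le 0$ is nonpositive; this is what yields $\mathrm{diam}\le n$. Your recursive-decomposition plan and your (2.5) ``potential/invariant'' idea do not obviously recover this cancellation, and your routing argument (``flip left to right'') does not address arbitrary pairs. You should either import the distance-barrier machinery from \cite{JZ} explicitly, or supply a different argument that controls $d_{I\Gamma}(u,v)-H(u,v)$ for \emph{every} pair in the regime $r\ge p+2$.
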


\begin{proof}
First we give a claim on the case $r\geq p$.

{\bf Claim 1.} \emph{If $r\geq p$, then $diam(I\Gamma_{n}^{(p,r)})\geq n$.}

Assume that $n=s(p+r)+t$ for some $s$ and $t$,
where $s\geq0$ and $0\leq t<p+r$.
We distinguish two cases to prove this claim: $0\leq t\leq r$ and $r<t<p+r$.
For the former case,
let
$\alpha=(1^{r}0^{p})^{s}1^{t}$, and
$\beta=(0^{r}1^{p})^{s}0^{t}$.
For the latter case, let
$\alpha=(1^{r}0^{p})^{s}1^{r}0^{t-r}$, and
$\beta=(0^{r}1^{p})^{s}0^{t}1^{t-r}$.
Then $H(\alpha,\beta)=n$ hold obviously.
As $r\geq p$,
both $\alpha$ and $\beta$ are vertices of $I\Gamma_{n}^{(p,r)}$.
Hence, $diam(I\Gamma_{n}^{(p,r)})\geq n$.
Claim 1 is proved.

It is well known that $diam(Q_{n})=n$ for any $n\geq1$.
So if $I\Gamma_{n}^{(p,r)}$ is an isometric subgraph of $Q_{n}$,
then $diam(I\Gamma_{n}^{(p,r)})\leq n$.
Theorem 1.1 in \cite{JZ} show that if $p=1$,
or $p\geq2$ and $p\leq r\leq p+1$,
then $I\Gamma_{n}^{(p,r)}$ is an isometric subgraph of $Q_{n}$,
and so $diam(I\Gamma_{n}^{(p,r)})\leq n$.
By Claim 1, $diam(I\Gamma_{n}^{(p,r)})\leq n$ for $r\geq p$.
Hence, $(1)$ and $(2.2)$ hold.

Next we consider $(2.1)$.
By Theorem 1.1 \cite{JZ},
$I\Gamma_{n}^{(p,r)}$ is an isometric subgraph of $Q_{n}$ if $p\geq2$ and $r< p$.
So for any two vertexes of $I\Gamma_{n}^{(p,r)}$,
the distance of them in $I\Gamma_{n}^{(p,r)}$ is the hamming distance between them.
We distinguish four cases to continue the discussion:
\textcircled{\small{1}} $n<r$,
\textcircled{\small{2}} $r\leq n\leq2r$,
\textcircled{\small{3}} $2r< n\leq p+r$ and
\textcircled{\small{4}} $n>r+p$.
For \textcircled{\small{1}},
let
$\alpha=1^{n}$, and
$\beta=0^{n}$.
For \textcircled{\small{2}}, let
$\alpha=0^{n-r}1^{r}$, and
$\beta=1^{n-r}0^{r}$;
Then $H(\alpha,\beta)=n$,
and so $diam(I\Gamma_{n}^{(p,r)})=n$ for \textcircled{\small{1}} and \textcircled{\small{2}}.
If \textcircled{\small{3}} happens,
then there at most one factor $1^{r}$ in any vertex of $I\Gamma_{n}^{(p,r)}$.
So the following words have the longest hamming distance:
$\alpha=1^{r}0^{n-r}$, and
$\beta=0^{n-r}1^{r}$.
Obviously, $H(\alpha,\beta)=2r$.
For \textcircled{\small{4}} $n>p+r$,
let $n=s(p+r)+k$ for some $s\geq1$ and $0\leq k<p+r$.
That is $s=\lfloor\frac{n}{p+r}\rfloor$ and $k=mod(n,p+r)$.
Suppose that $d(\alpha,\beta)=diam(I\Gamma_{n}^{(p,r)})$.
Then any factor of $\alpha$ and $\beta$ of length $p+r$ may differ at most in $2r$ bits,
and so $\alpha$ and $\beta$ should be the words as shown in the following:
$\alpha=\ldots1^{r}0^{r}0^{p-r}\ldots$, and
$\beta=\ldots0^{r}1^{r}0^{p-r}\ldots$.
Obviously, if $k<2r$,
then $H(\alpha,\beta)=2r\lceil\frac{n}{p+r}\rceil+k$,
and if $k\geq2r$,
then $diam(I\Gamma_{n}^{(p,r)})=2r\lceil\frac{n}{p+r}\rceil+2r$.
So for \textcircled{\small{4}},
$diam(I\Gamma_{n}^{(p,r)})=2r\lceil\frac{n}{p+r}\rceil+min\{mod(n,p+r),2r\}$.
It is easy to see that the value of $diam(I\Gamma_{n}^{(p,r)})$ in \textcircled{\small{1}}, \textcircled{\small{2}} and \textcircled{\small{3}} also can be write as the form as it in \textcircled{\small{4}}.
So $(2.1)$ holds.

Now we turn to $(2.3)$, $(2.4)$ and $(2.5)$.
If $10^{t}1$ and $11^{t}1$ appearing in the same coordinates of vertices $\alpha$ and $\beta$ of $I\Gamma_{n}^{(p,r)}$,
then we call there is a distance-barrier in $\alpha$ and $\beta$.
It has been shown that
$d_{I\Gamma_{n}^{(p,r)}}(\mu,\nu)=H(\mu,\nu)$ if and only if there does not exist any distance-barrier in $\mu$ and $\nu$ \cite{JZ}.
Clearly, a distance-barrier appears if and only if $p\geq2$, $r\geq p+2$ and $n\geq p+2$.
This concept can be easily generalized as follows.
If for the case $p\geq2$, $r\geq sp+(s+1)$ and $n\geq sp+(s+1)$,
\begin{align}\label{4.14}
\alpha'=1^{r_{1}}0^{p_{1}}1^{r_{2}}\ldots0^{p_{s}}1^{r_{s+1}},~and~
\beta'=1^{r_{1}}1^{p_{1}}1^{r_{2}}\ldots1^{p_{s}}1^{r_{s+1}},
\end{align}
\noindent
appears in the same coordinates of vertices $\alpha,\beta$ of $I\Gamma_{n}^{(p,r)}$,
then it is called there is a $s$-distance-barrier in $\alpha$ and $\beta$,
where $r_{i}\geq 1$, $p_{j}\geq p$, $\sum_{i=1}^{s+1}r_{i}+\sum_{j=1}^{s}p_{j}=r'\leq r$.
Obviously, the distance-barrier introduced earlier is a $s$-distance-barrier of the case $s=1$.

By Claim 1, we know that $diam (I\Gamma_{n}^{(p,r)})\geq n$ if $r\geq p+2$.
Since there may exist $s$-distance-barrier in some vertices of $I\Gamma_{n}^{(p,r)}$ if $r\geq p+2$,
perhaps $diam (I\Gamma_{n}^{(p,r)})> n$ holds.
Suppose that there is a $s_{t}$-distance-barrier appears in vertices $\alpha,\beta$ of $I\Gamma_{n}^{(p,r)}$,
where $t=1,\ldots,k$ for some $k\geq1$ and the $s_{t}$-distance-barrier is
$\alpha'_{t}=1^{r_{t_{1}}}0^{p_{t_{1}}}1^{r_{t_{2}}}\ldots0^{p_{s_{t}}}1^{r_{s_{t}+1}}$,
$\beta'_{t}=1^{r_{t_{1}}}1^{p_{t_{1}}}1^{r_{t_{2}}}\ldots1^{p_{s_{t}}}1^{r_{s_{t}+1}}$, and
$r_{t_{i}}\geq 1$, $p_{t_{j}}\geq p$, $\sum_{i=1}^{s_{t}+1}r_{t_{i}}+\sum_{j=1}^{s_{t}}p_{t_{j}}=r_{t}'\leq r$.
Then
\begin{align}\label{4.15}
d_{I\Gamma_{n}^{(p,r)}}(\alpha,\beta)-n=\sum^{k}_{t=1}d_{I\Gamma_{r'_{t}}^{(p,r)}}(\alpha'_{t},\beta'_{t})-r'_{t},
\end{align}
\noindent
and $d_{I\Gamma_{r'_{t}}^{(p,r)}}(\alpha'_{t},\beta'_{t})-r'_{t}$ is called the contribution to $d_{I\Gamma_{n}^{(p,r)}}(\alpha,\beta)$ of the $s_{t}$-distance-barrier.
The problem on determining $diam(I\Gamma_{n}^{(p,r)})$ becomes to find the $s_{t}$-distance-barrier with the maximum value of $d_{I\Gamma_{r'_{t}}^{(p,r)}}(\alpha'_{t},\beta'_{t})-r'_{t}$ for $t=1,\ldots,k$ by Eq. (\ref{4.15}).

Now we calculate $d_{I\Gamma_{r'}^{(p,r)}}(\alpha',\beta')-r'$ for the $s$-distance-barrier shown in (\ref{4.14}).
Assume that $r_{k}=max\{r_{1},r_{2},\ldots,r_{s+1}\}$.
As it should be avoided to appear more than $r$ consecutive 1s and less than $p$ 0s between two
factors composed of (at most $r$) consecutive 1s when $\alpha'$ is transformed into $\beta'$,
the factor $1^{r_{i}}$  are changed to $0^{r_{i}}$ and then to $1^{r_{i}}$, $i=1,\ldots,k-1,k+1,\ldots,s+1$.
Hence, $d(\alpha',\beta')=2(r_{1}+\ldots+r_{k-1}+r_{k+1}+\ldots+r_{s+1})+p_{1}+\ldots+p_{s}$.
So the contribution to $d(\alpha',\beta')$ of this $s$-distance-barrier is
\begin{align}\label{4.16}
d(\alpha',\beta')-r'=r_{1}+\ldots+r_{k-1}+r_{k+1}+\ldots+r_{s+1}-r_{k}.
\end{align}
For $p\geq2$ and $p+2\leq r\leq2p+2$, or $p\geq2$, $r\geq2p+3$ but $n<2p+3$,
we know that the only $s$-distance-barrier are $1^{r_{1}}0^{p}1^{r_{2}}$ and $1^{r_{1}}1^{p}1^{r_{2}}$,
where $r_{1}\geq1,r_{2}\geq1$ and $r_{1}+p+r_{2}\leq r$.
Without loss of generality,
suppose $r_{1}=max\{r_{1},r_{2}\}$.
Then $r_{1}\geq r_{2}$,
and so $d(\alpha',\beta')-r'=r_{2}-r_{1}\leq0$,
which means that $diam(I\Gamma_{n}^{(p,r)})\leq n$.
So we know that $diam(I\Gamma_{n}^{(p,r)})=n$ by Claim 1,
and $(2.3)$ and $(2.4)$ hold.

It is clear that if $p\geq2$, $r\geq 2p+3$ and $n\geq 2p+3$,
then there always exist $s$-distance-barrier such that the contribution (Eq. (\ref{4.16})) is bigger than zeros.
For example, $10^{p}10^{p}1$ and $11^{p}11^{p}1$ is $2$-distance-barrier,
with $r_{1}=r_{2}=r_{3}=1$. Obviously, $r_{1}+r_{2}-r_{3}=1>0$.
So Claim 2 holds by the result above.

{\bf Claim 2.} \emph{Assume $p\geq1$, $r\geq 1$ and $n\geq1$.
Then $diam(I\Gamma_{n}^{(p,r)})>n$ if and only if $r\geq 2p+3$ and $n\geq 2p+3$.}

Note that for a $s$-distance-barrier in (\ref{4.14}),
if we want to maximize $d(\alpha',\beta')-r'$,
then it should be satisfied that $t_{1}=\ldots=t_{s}=p$ and the values of $r_{1},r_{2}\ldots,r_{s+1}$ also probably close to be equal by Eq. (\ref{4.16}).

Finally,
we turn to consider the case $p\geq2$, $r\geq 2p+3$ and $n\geq2p+3$.
By Claim 2, Eqs. (\ref{4.15}) and (\ref{4.16}),
if we find the $s$-distance-barrier with the biggest contribution to the distance,
then the diameter of $I\Gamma_{n}^{(p,r)}$ is found.
Although the general structure of the $s$-distance-barrier with biggest contribution is known by the above discussions,
the value of $s$ is not determined yet.
Note that for the given $p$ and $r$,
the corresponding $s$ must exist and can be found from a finite number of times comparisons.
So we only give a bound on the diameter of $I\Gamma_{n}^{(p,r)}$ for this case.

Suppose the following $(k+2)$-distance-barrier with the biggest contribution but with the smallest length:
\begin{align}\label{4.17}
\alpha'=1^{t}0^{p}1^{t}\ldots0^{p}1^{t'}~and~
\beta'=1^{t}1^{p}1^{t}\ldots1^{p}1^{t'},
\end{align}
\noindent
where $1\leq t'\leq t$,
and there are $k+2$ factors $1^{t}0^{p}$ in $\alpha$.
Then $2p+3\leq r'=(k+2)(p+t)+t'\leq r$,
$0\leq k\leq\lfloor\frac{r}{t+p}\rfloor-2$ and $1\leq t\leq\lfloor\frac{r-2p-1}{2}\rfloor$ by $r\geq2p+3$.
By Eq. (\ref{4.16}),
that the contribution of the above $(k+2)$-distance-barrier is $c=kt+t'$.
For two $(k+2)$-distance-barriers as shown (\ref{4.17}) in vertices of $I\Gamma_{n}^{(p,r)}$,
their position relationship may be
$\alpha'\beta'$ and $\beta'\alpha'$, or
$\alpha'1^{p}\beta'$ and $\beta'0^{p}\alpha'$.
So the number of $(k+2)$-distance-barrier in vertices of $I\Gamma_{n}^{(p,r)}$ is at least $\lfloor\frac{n}{r'+p}\rfloor$,
and at most is $\lceil\frac{n}{r'}\rceil$.
By Eq. (\ref{4.15}),
we know that
$n+c\lfloor\frac{n}{r'+p}\rfloor\leq diam(I\Gamma_{n}^{(p,r)})\leq n+c\lceil\frac{n}{r'}\rceil,$
and so $(2.5)$ holds.
This completes the proof.
\end{proof}

Note that for $(2.5)$ in Theorem \ref{thm4.5},
the diameter of $I\Gamma_{n}^{(p,r)}$ is bigger than $n$,
but the bounds may be sharp.
For example suppose $p=2$,
we consider the following cases,

(1) $r=7$ and $n=9x-2$ for $x\geq1$;

(2) $r=9$ and $n=7y$ for $y\geq1$.

It can be find that all the distance-barrier with the length not more than $9$ are:
$1001001$ and $1111111$; 
$10010001$ and $11111111$;
$10001001$ and $11111111$;
$100010001$ and $111111111$;
$110011001$ and $111111111$;
$110010011$ and $111111111$; and
$100110011$ and $111111111$.

Obviously, the contribution of any one of the above distance-barrier is $1$,
and the smallest length is $7$.
So for both examples (1) and (2),
$c=1$ and $r'=7$.

For (1), let
$\alpha=(100100111)^{(x-1)}(1001001)$, and
$\beta=(111111100)^{(x-1)}(1111111)$.
So this example,
the lower bound of $(2.5)$ holds.

For (2), if $y$ is even, then let
$\alpha=(10010011111111)^{\frac{y}{2}}$, and
$\beta=(11111111001001)^{\frac{y}{2}}$;
if $y$ is odd, then let
$\alpha=(10010011111111)^{\frac{y-1}{2}}(1001001)$, and
$\beta=(11111111001001)^{\frac{y-1}{2}}(1111111)$.
So for this example,
the upper bound of $(2.5)$ holds.

\smallskip
\section{Maximum and  minimum degree}

Recall that the number of neighbors of a vertex $v$ in a graph $G$ is called the degree of $v$ and denoted $d_{G}(v)$ (or simply $d(v)$).
Let $\Delta(G)$ and $\delta(G)$ be the maximum and  minimum degree of a graph $G$, respectively.
The neighbourhood $N(u)$ of a vertex $u$ is the set of vertices adjacent to $u$.
Since for $n=1$ both the graphs $O\Gamma_{n}^{(p,r)}$ and $I\Gamma_{n}^{(p,r)}$ are $K_{2}$,
we only consider the case $n\geq2$.

\begin{thm}\label{thm5.1}
Let $p,r\geq1$ and $n\geq2$.
Then $\Delta(O\Gamma_{n}^{(p,r)})=\Delta(I\Gamma_{n}^{(p,r)})=n$, and
$d_{O\Gamma_{n}^{(p,r)}}(0^{n})=d_{I\Gamma_{n}^{(p,r)}}(0^{n})=n$.
Specifically, $0^{n}$ is the only vertex with degree of $\Delta(O\Gamma_{n}^{(p,r)})$ if and only if it is one of the following cases:

$(a)$ $p\geq1,r=1$ and $n\geq2$, and
$(b)$  $p\geq2,r\geq2$ and $n\geq2$;

\noindent
$0^{n}$ is the only vertex with degree of $\Delta(I\Gamma_{n}^{(p,r)})$ if and only if it is one of the following cases:

$(a')$  $p\geq1,r=1$ and $n\geq2$,
$(b')$  $p\geq2,r=2$ and $n\geq 4, and$
$(c')$  $p\geq2,r\geq3$ and $n\geq 5$.
\end{thm}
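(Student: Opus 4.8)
The plan is to settle the two uniform equalities first and then the uniqueness statements by a case analysis organized around the forbidden-factor descriptions of Proposition~\ref{prop2.5}. Since every vertex of either cube is a word of length $n$ it has at most $n$ neighbours, so $\Delta\le n$; and every word obtained from $0^{n}$ by flipping a single coordinate has exactly one $1$ and hence lies in both $O\Gamma_{n}^{(p,r)}$ and $I\Gamma_{n}^{(p,r)}$, so $d(0^{n})=n$ in both. This gives $\Delta(O\Gamma_{n}^{(p,r)})=\Delta(I\Gamma_{n}^{(p,r)})=n$ together with the two degree identities. For the uniqueness parts, a vertex $v$ has degree $n$ exactly when each of its $n$ one-coordinate flips is again a vertex: to prove uniqueness I would exhibit, for any putative degree-$n$ vertex $v\neq 0^{n}$, one flip leaving the cube; to disprove it I would display one explicit $v\neq 0^{n}$ all of whose flips stay inside.

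For $O\Gamma_{n}^{(p,r)}$ uniqueness is short. If $p\ge 2$, or if $p=1$ and $r=1$, then $11$ is a forbidden factor, so for any $v\neq 0^{n}$ with leftmost $1$ at position $i$, flipping coordinate $i+1$ when $i<n$, or coordinate $n-1$ when $i=n$ (using $n\ge 2$), creates the factor $11$; hence $v$ is not of degree $n$, and $0^{n}$ is the unique such vertex. These are exactly cases $(a)$ and $(b)$. When $p=1$ and $r\ge 2$, the cube is $Q_{n}$ with only the words containing $1^{r+1}$ removed, and $v=10^{n-1}$ has every flip valid (one obtains $0^{n}$ or a word with at most two $1$s, hence no factor $1^{r+1}$), so $0^{n}$ is not unique; this establishes the stated equivalence for $O\Gamma_{n}^{(p,r)}$.

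For $I\Gamma_{n}^{(p,r)}$ the case $p=1$ coincides with the $O$-cube case (the vertex sets agree by Proposition~\ref{prop2.5}), giving $(a')$ for $r=1$ and non-uniqueness for $r\ge 2$; and when $r=1$, $p\ge 2$, a $1$ at position $i<n$ has the forbidden flip at $i+1$ (creating $11$, now forbidden since $r=1$) and a $1$ at position $n$ has the forbidden flip at $n-1$, so $0^{n}$ is unique for all $n\ge 2$, which is $(a')$. The heart of the theorem is $p\ge 2$, $r\ge 2$, and here I would prove a structural lemma about any hypothetical degree-$n$ vertex $v\neq 0^{n}$: (A) every maximal block of $1$s has length $\le 2$, since an interior flip inside a block of length $\ge 3$ produces the forbidden factor $101$ (legal because $1\le p-1$); (B) every maximal block of $1$s ends at a position $\ge n-1$, since if a block ends at $i\le n-2$ then $v_{i+1}=0$ and, $v$ being a vertex avoiding $101$, also $v_{i+2}=0$, so flipping coordinate $i+2$ creates $101$; and (C), by the reversal symmetry of Proposition~\ref{prop2.6}, every maximal block of $1$s starts at a position $\le 2$. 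Combining (A)--(C), a block $[a,b]$ satisfies $a\le 2$, $b\ge n-1$, $b-a\le 1$, which forces $n\le 4$; hence for $n\ge 5$ there is no block at all, so $v=0^{n}$ and $0^{n}$ is unique, covering the $n\ge 5$ part of $(b')$ and $(c')$.

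It then remains to handle $n\in\{2,3,4\}$ for $p\ge 2$, $r\ge 2$. For $n=2$ the word $11$ is a vertex with both flips valid, and for $n=3$ the word $010$ is a vertex with all three flips valid, so $0^{n}$ is not unique for $n\le 3$. For $n=4$, (A)--(C) leave $v=0110$ as the only candidate besides $0^{4}$; flipping its first coordinate gives $1110$, which contains $1^{r+1}$ when $r=2$ but is a vertex when $r\ge 3$ (the other three flips of $0110$ are always valid). Hence $0^{4}$ is unique iff $r=2$, which pins the thresholds $n\ge 4$ in $(b')$ and $n\ge 5$ in $(c')$ and finishes the proof. The one delicate point will be making the block lemma — especially (B) — watertight and correctly isolating the boundary cases $n=2,3,4$; the rest is routine bookkeeping against the forbidden-factor lists.
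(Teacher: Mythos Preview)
Your argument is correct and follows essentially the same route as the paper: both first show $d(0^{n})=n$ via the $n$ weight-one neighbours, and both obtain uniqueness by exhibiting, for any nonzero vertex, a single flip producing a forbidden factor (the paper uses $\alpha+e_{t\pm1}$ to create $11$ in $O\Gamma$ and $\alpha+e_{t\pm2}$ to create $101$ or $1^{3}$ in $I\Gamma$, which is exactly your (A)--(C)). Your block lemma is a slightly tidier packaging of the same case analysis, collapsing the paper's separate $r=2$/$r\ge 3$ split into one statement that forces $n\le 4$.

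One small inaccuracy to fix: in the $n=4$ step you assert that ``the other three flips of $0110$ are always valid,'' but flipping the last coordinate yields $0111$, which also contains $1^{3}$ and is therefore forbidden when $r=2$. This does not affect your conclusion, since you already exhibited the forbidden flip $1110$ for $r=2$, and for $r\ge 3$ all four flips are indeed valid.
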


\begin{proof}
It is well known that for every $n$,
$\Delta(Q_{n})=n$.
Since both $O\Gamma_{n}^{(p,r)}$ and $I\Gamma_{n}^{(p,r)}$ are subgraph of $Q_{n}$,
we know that $\Delta(I\Gamma_{n}^{(p,r)})\leq n$ and $\Delta(I\Gamma_{n}^{(p,r)})\leq n$ by
Proposition \ref{prop2.5}.
Let $p,r\geq1$ and $n\geq2$.
Then both $0^{n}$ and $0^{t}10^{n-t-1}$ $(t=0,1,\ldots,n-1)$ are vertices of $O\Gamma_{n}^{(p,r)}$ (and also $I\Gamma_{n}^{(p,r)}$) obviously.
It is easy to see that $0^{n}$ joint to $0^{t}10^{n-t-1}$ in $O\Gamma_{n}^{(p,r)}$ (and $I\Gamma_{n}^{(p,r)}$) for $t=0,1,\ldots,n-1$,
and so $d_{I\Gamma_{n}^{(p,r)}}(0^{n})=d_{O\Gamma_{n}^{(p,r)}}(0^{n})=n$.

First,
let us find the cubes $O\Gamma_{n}^{(p,r)}$ with only one vertex with degree $n$.
For the case $p=1,r\geq2$ and $n\geq2$,
it is easy to see that $0^{t}10^{n-t-1}+e_{s}$ is also a vertex of $O\Gamma_{n}^{(p,r)}$ for any $t\in\{0,1,\ldots,n-1\}$ and $s\in[n]$.
Therefore, $d_{O\Gamma_{n}^{(p,r)}}(0^{t}10^{n-t-1})=n$.
For the cases $p\geq1$ and $r=1$, or $p\geq2$ and $r\geq2$,
we claim that $0^{n}$ is the only vertex with the degree $\Delta(O\Gamma_{n}^{(p,r)})$.
In fact,
we may suppose $\alpha$ is any vertex of $O\Gamma_{n}^{(p,r)}$ such that $w(\alpha)>0$,
and the $t$th coordinate of $\alpha$ is $1$ for some $t\in[n]$.
Then at least one of the $\alpha+e_{t+1}$ and $\alpha+e_{t-1}$ is not a vertex of $O\Gamma_{n}^{(p,r)}$.
Therefore,
$d_{O\Gamma_{n}^{(p,r)}}(\alpha)<n$.
So $0^{n}$ is the only vertex with degree of $\Delta(O\Gamma_{n}^{(p,r)})$ if and only if one of $(a)$ and $(b)$ holds.

Next we turn to consider the cubes $I\Gamma_{n}^{(p,r)}$.
Since $I\Gamma_{n}^{(p,r)}\cong O\Gamma_{n}^{(p,r)}$ for $p=1$ or $r=1$ by Proposition \ref{prop2.7},
we know that there exist only vertex $0^{n}$ with degree $n$ for the case $p\geq1,r=1$ and $n\geq2$,
and there exist vertex $0^{t}10^{n-t-1}\neq0^{n}$ with degree $n$ for the case $p=1,r\geq2$ and $n\geq2$ by the above discussions.
The following all discussions are considered under the condition $p\geq2$.
If $r\geq2$ and $2\leq n\leq3$,
then $01$ and $010$ are the vertices with the degree of $\Delta(I\Gamma_{2}^{(p,r)})=2$ and $\Delta(I\Gamma_{3}^{(p,r)})=3$, respectively.
The rest case $r\geq2$ and $n\geq4$ can be divided to:
$r=2$ and $n\geq 4$;
$r\geq3$ and $n=4$; and
$r\geq3$ and $n\geq5$.
It is easy to see that if $r\geq3$ and $n=4$,
then $0110$ is a vertex with the degree of $\Delta(I\Gamma_{4}^{(p,r)})=4$.
We claim for the other two cases,
there exist only $0^{n}$ with the degree $\Delta(I\Gamma_{4}^{(p,r)})$.
In fact,
we may suppose $\alpha=a_{1}a_{2}\ldots a_{n}$ is any vertex of $I\Gamma_{n}^{(p,r)}$ with $w(\alpha)>0$.
First for $r=2$ and $n\geq 4$ assume that $a_{t}=1$ for some $1\leq t\leq n$.
Then at least one of $t-2$ and $t+2$ belong to $[n]$,
and so $\alpha+e_{t-2}$ or $\alpha+e_{t+2}$ is not a vertex of $I\Gamma_{n}^{(p,r)}$.
Therefore, $d_{I\Gamma_{n}^{(p,r)}}(\alpha)\leq n-1$.
Next we consider the last case $r\geq3$ and $n\geq5$,
and distinguish two subcases by the longest length $l$ of the factors consisting of element $1$s contained in $\alpha$.
If $1\leq l\leq 2$,
then at least one of $\alpha+e_{t-2}$ and $\alpha+e_{t+2}$ is not a vertex of $I\Gamma_{n}^{(p,r)}$,
and so $d_{I\Gamma_{n}^{(p,r)}}(\alpha)\leq n-1$.
Finally, we consider the case $3\leq l\leq r$.
Without loss of generality assume that $a_{t-1}=1$ and $a_{t+1}=1$.
Then $\alpha+e_{t}$ is not a vertex of $I\Gamma_{n}^{(p,r)}$.
So $d_{I\Gamma_{n}^{(p,r)}}(\alpha)\leq n-1$.
Hence,
$0^{n}$ is the only vertex with degree of $\Delta(I\Gamma_{n}^{(p,r)})$ if and only if one of $(a'),(b')$ and $(c')$ holds.
\end{proof}

For the minimum degree of $I\Gamma_{n}^{(p,r)}$ and $O\Gamma_{n}^{(p,r)}$,
we first give a lemma.

\begin{lem}\label{lem5.2}
There is a vertex $\alpha$ of $I\Gamma_{n}^{(p,r)}$ $(O\Gamma_{n}^{(p,r)})$ that $d_{I\Gamma_{n}^{(p,r)}}(\alpha)=\delta(I\Gamma_{n}^{(p,r)})$ $(=\delta(O\Gamma_{n}^{(p,r)}))$,
and all the vertices of $N(\alpha)$ can be obtained from $\alpha$ by changing some bits from $1$ to $0$.
\end{lem}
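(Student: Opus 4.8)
I would first reformulate the conclusion: it suffices to produce a minimum-degree vertex $\alpha$ that is \emph{maximal in the coordinatewise order} on the vertex set, i.e.\ $\alpha+e_i$ is \emph{not} a vertex whenever $\alpha_i=0$. Indeed, adjacent vertices of $O\Gamma_n^{(p,r)}$ or $I\Gamma_n^{(p,r)}$ differ in exactly one coordinate, so every neighbour of such an $\alpha$ is obtained from $\alpha$ by switching a single $1$ to a $0$, which is precisely the assertion. By Proposition~\ref{prop2.7} we have $I\Gamma_n^{(p,r)}\cong O\Gamma_n^{(p,r)}$ when $p=1$ or $r=1$, so it is enough to handle $O\Gamma_n^{(p,r)}$ for all $p,r$ and $I\Gamma_n^{(p,r)}$ for $p\ge 2$.

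The engine is a monotonicity claim: \emph{if $v$ and $v+e_i$ are both vertices, then $d(v+e_i)\le d(v)$.} For $O\Gamma_n^{(p,r)}$ this holds with no exception. A neighbour of $v+e_i$ other than $v$ has the form $v+e_i+e_j$ with $j\ne i$; switching the $1$ at coordinate $i$ of this vertex back to $0$ returns exactly $v+e_j$, and deleting a $1$ from an $O$-code only lengthens runs of $0$s and can never raise the number of $1$s inside any window, so by the forbidden-factor description in Proposition~\ref{prop2.5} it cannot create a forbidden factor. Hence $v+e_j$ is again a vertex, the assignment $j\mapsto j$ injects $N(v+e_i)\setminus\{v\}$ into $N(v)\setminus\{v+e_i\}$, and $d(v+e_i)\le d(v)$. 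The $O$-cube case follows at once: starting from any vertex of degree $\delta(O\Gamma_n^{(p,r)})$ and repeatedly replacing it by an upper neighbour keeps the degree minimal while strictly increasing the weight, so the process terminates at a maximal minimum-degree vertex $\alpha$. For $I\Gamma_n^{(p,r)}$ with $p\ge 2$ the same pairing works unless switching the $1$ at coordinate $i$ in $v+e_i+e_j$ splits a $1$-block of length $\ge 3$ and thereby produces the forbidden factor $10^s1$ with $1\le s\le p-1$; this forces $j\in\{i-1,i+1\}$, and since $v$ itself contains no factor $101$ (as $p\ge 2$) at most one of $i-1,i+1$ can misbehave, giving $d(v+e_i)\le d(v)+1$, with equality only when $v+e_i$ extends a short $1$-block (of length $\le r-2$) whose $0$-run on one side is just long enough to accommodate the second inserted $1$.

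To finish the I-cube case ($p\ge 2$) I would fix a minimum-degree vertex $\alpha$ of \emph{maximum weight}. Suppose it had an upper neighbour $\gamma=\alpha+e_i$. If $d(\gamma)=d(\alpha)$ then $\gamma$ is a minimum-degree vertex of larger weight, contradicting the choice of $\alpha$. Otherwise we are in the exceptional local pattern with $d(\gamma)=d(\alpha)+1$; then $\gamma+e_{i-1}$ (and, if necessary, one further switch in the same region) is still a vertex, and tracking the effect of inserting these $1$s --- the freshly created long $1$-block absorbs coordinates that were previously removable, and its $0$-run on the left drops below length $p$ --- one sees that strictly more neighbours are destroyed than are created, so one reaches a vertex of degree $<\delta(I\Gamma_n^{(p,r)})$, which is impossible. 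Either way $\alpha$ has no upper neighbour, which proves the lemma. The delicate point --- and, I expect, the main obstacle --- is exactly this last step: pinning the exceptional configuration down precisely enough (which $0$-run lengths, where the preceding block sits, how the boundary interferes) to be certain the extra switch or switches net a \emph{strict} drop in degree, rather than merely $d(\gamma)\to d(\gamma)-1=d(\alpha)$, which would not close the descent.
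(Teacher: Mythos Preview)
Your approach is genuinely different from the paper's. The paper proceeds by direct case analysis on $r$ (separately $r=1$, $r=2$, $r\ge 3$): it takes a minimum-degree vertex $\alpha'$, locates a $0$-block of length $m$ containing a flippable coordinate, and shows by explicit construction that either $m$ is just at the threshold, in which case filling in one or two $0$s yields a vertex of the same degree, or $m$ exceeds the threshold, in which case an explicit $\beta$ of strictly smaller degree is exhibited. Your monotonicity lemma --- $d(v+e_i)\le d(v)$ for $O$-cubes, $d(v+e_i)\le d(v)+1$ for $I$-cubes with $p\ge 2$ --- is more conceptual and dispatches $O\Gamma_n^{(p,r)}$ completely without any case split; this part is correct and cleaner than what the paper sketches.

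For $I\Gamma_n^{(p,r)}$ with $p\ge 2$ your framework is sound --- the choice of a minimum-degree vertex of maximum weight, together with the $+1$ bound, correctly isolates the single obstruction --- but the closing step is, as you yourself flag, not yet a proof. Concretely: for $d(\gamma)=d(\alpha)+1$ you need the map $N(\gamma)\setminus\{\alpha,\gamma+e_j\}\to N(\alpha)\setminus\{\gamma\}$ to be \emph{onto} as well as injective, and chasing this forces the $1$-block of $\alpha$ abutting position $i$ to have length exactly $1$ (otherwise $\alpha+e_{i-1}$ is unmatched) and the adjacent $0$-run to satisfy $p+2\le m\le 2p$ (otherwise $\alpha+e_{i+p}$ is unmatched). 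Under these constraints one can check that $\gamma'=\alpha+e_i+e_{i+1}$ has $d(\gamma')\le d(\alpha)$: extending the block to length $3$ makes its middle coordinate unflippable and shortens the remaining $0$-run below $p+1$, so no upward neighbour survives there, while on the far side the new block can be extended at most as often as the old singleton could. But verifying this still needs a local count across the boundary situations ($0$-run terminal, $r=3$ versus $r\ge 4$, the next $1$-block already of length $r$), which in effect reproduces in miniature the paper's case analysis. Your route gets there with less bookkeeping overall, but the final step will not close by a one-line observation; you should expect to write out that local count.
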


\begin{proof}
We only give the proof of $I\Gamma_{n}^{(p,r)}$ here,
and the case for $O\Gamma_{n}^{(p,r)}$ can be shown similarly.
This lemma is proven by showing that
if for a vertex $\alpha'=a'_{1}a'_{2}\ldots a'_{n}$ of $I\Gamma_{n}^{(p,r)}$ with $d_{I\Gamma_{n}^{(p,r)}}(\alpha')=\delta(I\Gamma_{n}^{(p,r)})$,
there is a vertex in $N(\alpha')$ such that obtained from $\alpha'$ by changing some $a'_{t}$ from $0$ to $1$,
then another vertex $\alpha$ such that $d_{I\Gamma_{n}^{(p,r)}}(\alpha)=\delta(I\Gamma_{n}^{(p,r)})$ can be obtained.
We distinguish three cases $r=1$, $r=2$ and $r\geq3$.
In all these cases,
we assume that the length of the block containing the element $a'_{t}=0$ such that $\alpha'+e(t)\in N(\alpha')$ is $m$.
Obviously,
this block $0^{m}$ may be the first or last block,
or between some two blocks consisting of $1$s.

{\bf Case 1}  $r=1$.

Obviously,
if $a'_{t}=0$ is contained in the first or last block,
then $m\geq p+1$.
Without loss of generality,
suppose $a'_{t}=0$ is contained in the first block.
Then $\alpha'+e(s)\in N(\alpha')$ for all $s\leq m-p$.
Further we can show that $m\geq p+2$ cannot happen by showing that $\beta=\alpha'+e_{m-(p+1)}$ is a vertex with smaller degree than $\alpha'$.
In fact,
if $p+2\leq m\leq 2p+2$,
then $d(\beta)=d(\alpha')-(m-p-1)$;
if $m> 2p+2$,
then $d(\beta)=d(\alpha')-(p+1)$.
Obviously,
this is a contradiction to the fact $d_{I\Gamma_{n}^{(p,r)}}(\alpha')=\delta(I\Gamma_{n}^{(p,r)})$.
Therefore, only the case $m=p+1$ may happen,
and so $t=1$.
Let $\alpha=\alpha'+e_{1}$.
Then $d(\alpha)=d(\alpha')=\delta(I\Gamma_{n}^{(p,r)})$.

Now we suppose the block $0^{m}$ containing $a'_{t}=0$ is between two blocks of $1$s,
that is, $\alpha'=\ldots10^{m}1\ldots$ for some $m$.
Obviously, $m\geq2p+1$ holds,
otherwise there is not $0$ in $0^{m}$ can be changed to $1$.
Assume that the first $0$ in $0^{m}$ begins from the $(k+1)$th coordinate of $\alpha'$.
Then $\alpha'+e_{s}\in N(\alpha')$ for all $k+(p+1)\leq s\leq k+(m-p)$.
Let $\beta=\alpha'+e_{p+2}$.
Then we can show that $d(\beta)<d(\alpha')$ when $m\geq2p+2$,
and so only $m=2p+1$ may happen.
In fact,
if $2p+2\leq m\leq 3p+2$,
then $d(\beta)=d(\alpha')-(m-2p-1)$;
if $m> 3p+2$,
then $d(\beta)=d(\alpha')-(p+1)$.
Obviously,
this is a contradiction to the fact $d_{I\Gamma_{n}^{(p,r)}}(\alpha')=\delta(I\Gamma_{n}^{(p,r)})$.
Therefore, $m=2p+1$ and so $t=k+p+2$.
Let $\alpha=\alpha'+e_{k+p+2}$.
Then $d(\alpha)=d(\alpha')$.

Next we turn to consider $r=2$ and $r\geq3$.
In both cases,
we show that:
if $a'_{t}=0$ is contained in the first or last block,
then $m=p+1,$ or $m=p+2$;
if $a'_{t}=0$ in some block between two blocks consisting of $1$s,
then $m=2p+1$ or  $m=2p+2$.
Those facts can be shown as follows.

{\bf Case 2}  $r=2$.

First we suppose $a'_{t}=0$ is contained in the first block (the case for the last block can be considered similarly).
Then $m\geq p+1$,
and $\alpha'+e_{s}\in N(\alpha')$ for all $s\leq m-p$.
Let $\beta=\alpha'+e_{m-p-1}+e_{m-p-2}$.
Then we can show that $m\geq p+3$ cannot happen.
In fact,
$d(\beta)=d(\alpha')-(m-p-2)$ if $p+3\leq m \leq 2p+3$,
and $d(\beta)=d(\alpha')-(p+1)$ if $2p+3< m$.
It is a contradiction to the fact $d_{I\Gamma_{n}^{(p,r)}}(\alpha')=\delta(I\Gamma_{n}^{(p,r)})$.
So only $m=p+1$ or $m=p+2$ may happens and $t=1$ or $2$.
Obviously,
both $\alpha=\alpha'+e_{1}$ and $\alpha=\alpha'+e_{1}+e_{2}$ have the same degree as $\alpha'$.
Now we suppose the case that $a'_{t}=0$ in contained some block $0^{m}$ between two blocks consisting of $1$s.
Then $m\geq 2p+1$.
Assume that the first coordinate of $0^{m}$ begins from the $k$th coordinate of $\alpha'$.
Then $\alpha'+e_{s}\in N(\alpha')$ for all $k+p\leq s\leq k+m-p$ clearly.
We claim that $m\geq 2p+3$ cannot happen.
In fact,
let $\beta=\alpha'+e_{k+p+2}+e_{k+p+3}$.
Then $d(\beta)=d(\alpha)-(m-2p-2)$ if $2p+3\leq m \leq3p+3$,
and $d(\beta)=d(\alpha)-(2p+1)$ if $3p+3<m$.
It is a contradiction to the fact $d_{I\Gamma_{n}^{(p,r)}}(\alpha')=\delta(I\Gamma_{n}^{(p,r)})$.
So only $m=2p+1$ or $m=2p+2$ happens.
Let $\alpha=\alpha'+e_{k+p+1}$ or $\alpha=\alpha'+e_{k+p+1}+e_{k+p+2}$.
Then $d(\alpha)=d(\alpha')$.

{\bf Case 3}  $r\geq3$.

Suppose $a'_{t}=0$ is contained in the first block $0^{m}$ (the last block can be considered similarly).
Then $m\geq p+1$,
and $\alpha'+e_{s}\in N(\alpha')$ for all $s\leq m-p$.
We claim that $m\geq p+3$ cannot happen.
Otherwise,
let $\beta=\alpha'+e_{m-p}+e_{m-p-1}+e_{m-p-2}$.
Then $\beta\in V(I\Gamma_{n}^{(p,r)})$ but $\beta+e_{m-p-1}\notin V(I\Gamma_{n}^{(p,r)})$,
and so $d(\alpha)>d(\beta)$.
It is a contradiction to the fact $d_{I\Gamma_{n}^{(p,r)}}(\alpha')=\delta(I\Gamma_{n}^{(p,r)})$.
So $m\leq p+2$.
For $m=p+1$ and $m=p+2$,
both the words $\alpha=\alpha'+e_{1}$ and $\alpha=\alpha'+e_{1}+e_{2}$ are vertices of $I\Gamma_{n}^{(p,r)}$
and have the same degree as $\alpha'$.
Finally,
suppose that $a'_{t}=0$ in some block $0^{m}$ between two blocks consisting of $1$s.
Then $m\geq 2p+1$.
Suppose that the first coordinate of $0^{m}$ begins from the $k$th coordinate of $\alpha'$.
Then $\alpha'+e_{s}\in N(\alpha')$ for all $k+p\leq s\leq k+m-p$ clearly.
We claim that $m\geq 2p+3$ cannot happen.
Otherwise,
let $\beta=\alpha'+e_{k+p+1}+e_{k+p+2}+e_{k+p+3}$.
Then $\beta+e_{k+p+2}\notin I\Gamma_{n}^{(p,r)}$,
and so $d(\beta)<d(\alpha)$.
It is a contradiction to the fact $\alpha'$ is the vertex with minimum degree.
For $m=2p+1$ and $m=2p+2$,
both the vertices $\alpha=\alpha'+e_{k+p+1}$ and $\alpha=\alpha'+e_{k+p+1}+e_{k+p+2}$ have the same degree as $\alpha'$.
This completes the proof.
\end{proof}

By Lemma \ref{lem5.2},
the following result holds obviously.
\begin{cor}\label{cor5.3}
For any $I\Gamma_{n}^{(p,r)}$,
there must exist a vertex with degree of $\delta(I\Gamma_{n}^{(p,r)})$ containing the minimum number of the $1$s which can be changed to $0$,
but the maximum number of coordinates of $0$s which cannot be changing to $1$.
\end{cor}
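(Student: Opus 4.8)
The plan is to read the statement off from Lemma~\ref{lem5.2} after refining the vertex it produces to one of least weight. For a vertex $v$ of $I\Gamma_{n}^{(p,r)}$, write $A(v)$ for the set of $1$-coordinates of $v$ that can be switched to $0$ while remaining a vertex of $I\Gamma_{n}^{(p,r)}$, and $B(v)$ for the set of $0$-coordinates of $v$ that can be switched to $1$ while remaining a vertex. Each neighbour of $v$ is realised by exactly one such switch, so $d_{I\Gamma_{n}^{(p,r)}}(v)=|A(v)|+|B(v)|$, and the number of $0$-coordinates of $v$ that cannot be switched to $1$ equals $n-w(v)-|B(v)|$.

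First I would invoke Lemma~\ref{lem5.2} to fix a vertex $\alpha$ with $d_{I\Gamma_{n}^{(p,r)}}(\alpha)=\delta(I\Gamma_{n}^{(p,r)})$ all of whose neighbours are obtained by switching a single $1$ to $0$; equivalently $B(\alpha)=\emptyset$, so that $|A(\alpha)|=\delta(I\Gamma_{n}^{(p,r)})$ and every one of the $n-w(\alpha)$ zero-coordinates of $\alpha$ is non-switchable. Among all vertices enjoying these properties I would then keep one, still denoted $\alpha$, whose weight $w(\alpha)$ is smallest possible. The point needing checking here is that minimising the weight does not destroy the property $B(\alpha)=\emptyset$: this is exactly where the case analysis from the proof of Lemma~\ref{lem5.2} (the split into $r=1$, $r=2$, $r\geq 3$, with sub-cases according to whether the relevant block of $0$s or of $1$s is terminal or internal) gets reused, now reading each local surgery in the direction that shrinks a block so as to lower the weight without changing the degree.

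Finally I would read off the statement: among minimum-degree vertices, $\alpha$ has the smallest weight, all $w(\alpha)$ of its $1$s can be switched to $0$, and all $n-w(\alpha)$ of its $0$s cannot be switched to $1$, so $\alpha$ simultaneously carries the fewest $1$s that can be changed to $0$ and the most $0$-coordinates that cannot be changed to $1$; any competing minimum-degree vertex either spends part of its degree on $0\to 1$ switches or has strictly larger weight, and hence does no better on either count. The step I expect to be the main obstacle is precisely the verification in the middle paragraph: one must rule out that shortening a long block of $1$s (or a long block of $0$s) turns a coordinate that was previously blocked — by the ``at least $p$ zeros between consecutive blocks of $1$s'' rule, or by the ``at most $r$ consecutive $1$s'' rule — into a switchable one, which would raise the degree above $\delta(I\Gamma_{n}^{(p,r)})$. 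Once that local check is carried out, the conclusion is immediate from the identity $d_{I\Gamma_{n}^{(p,r)}}(v)=|A(v)|+|B(v)|$.
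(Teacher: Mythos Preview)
Your approach is aligned with the paper's, which records only that Corollary~\ref{cor5.3} ``holds obviously'' from Lemma~\ref{lem5.2} and gives no further argument. Your decomposition $d(v)=|A(v)|+|B(v)|$ and the appeal to Lemma~\ref{lem5.2} to secure a minimum-degree vertex with $B(\alpha)=\emptyset$ are exactly what is needed.

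Two remarks. First, the ``main obstacle'' you flag is not one: you are selecting $\alpha$ from the finite non-empty set $\{v: d(v)=\delta,\ B(v)=\emptyset\}$ (non-empty by Lemma~\ref{lem5.2}), so an element of minimum weight exists tautologically. No local surgery and no reuse of the case analysis from Lemma~\ref{lem5.2} is required; that machinery was for producing \emph{some} vertex with $B=\emptyset$, not for moving within that set. Second, your closing justification does not land as written: if a competing minimum-degree vertex $v$ has $|B(v)|>0$, then $|A(v)|=\delta-|B(v)|<\delta=|A(\alpha)|$, so $v$ has strictly \emph{fewer} switchable $1$s than $\alpha$, contrary to your claim that it ``does no better''. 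The corollary's phrasing is admittedly loose, and in the paper it functions only as a heuristic bridge from Lemma~\ref{lem5.2} to the explicit constructions $I\delta_{n}^{(p,r)}$ in Theorem~\ref{thm5.4}; the precise optimality statements are never invoked on their own. If you want a clean reading that is actually used downstream, it is simply: there is a minimum-degree vertex all of whose neighbours arise from $1\to 0$ flips (so $\delta=|A(\alpha)|$ and every $0$ in $\alpha$ is non-switchable), which is Lemma~\ref{lem5.2} verbatim.
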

 
\begin{thm}\label{thm5.4}
Let $p,r,n\geq1$.
Then $\delta(I\Gamma_{n}^{(p,r)})$ can be determined as follows:

$(a)$ for $r=1, p\geq1$ and $n\geq1$,
$\delta(I\Gamma_{n}^{(p,r)})=\lceil \frac{n}{2p+1}\rceil$;

$(b)$ for $r\geq2, p=1$, $n\geq1$ and $t=\mod(n,2+r)$
\begin{equation*}
\delta(I\Gamma_{n}^{(p,r)})=
\begin{cases}
r[\frac{n}{r+2}]+t,~if~0\leq t\leq r-1,~and\\
r[\frac{n}{r+2}]+r,~if~ t=r,r+1;
\end{cases}\\
\end{equation*}

$(c)$ for $r\geq2, p\geq2$, $n\geq1$ and $t=\mod(n,2p+r)$
\begin{equation*}
\delta(I\Gamma_{n}^{(p,r)})=
\begin{cases}
2[\frac{n}{r+2p}]+t,~if~t=0,1,~and\\
2[\frac{n}{r+2p}]+2,~if~2\leq t\leq r+2p-1.
\end{cases}\\
\end{equation*}
\end{thm}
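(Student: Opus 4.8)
The plan is to turn the statement into a combinatorial extremal problem by means of Lemma~\ref{lem5.2} and Corollary~\ref{cor5.3}. Call a vertex $\alpha$ of $I\Gamma_n^{(p,r)}$ \emph{saturated} if no coordinate of $\alpha$ equal to $0$ can be switched to $1$ while remaining in $I\Gamma_n^{(p,r)}$; by Lemma~\ref{lem5.2} some vertex realizing $\delta(I\Gamma_n^{(p,r)})$ is saturated, and for such a vertex every neighbour is obtained by switching a $1$ to a $0$. I would first record which $1\to0$ switches are admissible: shortening or truncating a block $1^{\ell}$ at either of its two ends always gives a vertex, while switching a $1$ strictly inside a block of length $\ge 3$ splits that block into two pieces separated by a single $0$, which is forbidden as soon as $p\ge 2$. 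Hence a saturated $\alpha$ has $d_{I\Gamma_n^{(p,r)}}(\alpha)=w(\alpha)$ when $p=1$, and $d_{I\Gamma_n^{(p,r)}}(\alpha)=\sum_{B}\min\{|B|,2\}$ (sum over the blocks $B$ of $1$s of $\alpha$) when $p\ge 2$; the problem is then to minimise this number over all saturated words of length $n$.

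The second step is to translate ``saturated'' into constraints on the block--gap pattern of $\alpha$. A block $1^{\ell}$ with $\ell<r$ lying in the interior can be enlarged by switching a neighbouring $0$ unless both of its adjacent gaps have length exactly $p$; and a block of length $<r$ at an end of the word can always be enlarged towards that end, so (apart from the degenerate word $1^{n}$ with $n\le r$) the leftmost and rightmost blocks of a saturated word have length $r$. An interior gap $0^{g}$ between two length-$r$ blocks must satisfy $g\le 2p$, since otherwise a $0$ at distance $\ge p$ from both of them could be switched; the leading and trailing gaps must have length $\le p$; and when $p=1$ one additionally needs $j_1+j_2\ge r$ whenever a gap of length $1$ separates blocks $1^{j_1}$ and $1^{j_2}$ (else they merge into an admissible block). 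Conversely, any word obeying these rules is saturated, so the extremal problem reduces to choosing block lengths (each $\le r$) and gap lengths (interior in $[p,2p]$ between $r$-blocks, equal to $p$ adjacent to shorter blocks, $\le p$ at the two ends) that sum to $n$.

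Then I would identify the optimal pattern and read off $\delta$. A density estimate shows that, per unit of length, the cheapest admissible repeating unit is $10^{2p}$ when $r=1$ (cost $1$, period $2p+1$), is $1^{r}0^{2}$ when $p=1$ (cost $r$, period $r+2$), and is $1^{r}0^{2p}$ when $p\ge 2,\ r\ge 2$ (cost $2$, period $2p+r$); the inequalities one must verify --- that no shorter block, and for $p\ge 2$ no length-$2$ block with a $p$-gap, is cheaper than $1^{r}0^{2p}$ --- all boil down to $r\ge 2$. Writing $n=km+t$ with $m$ the relevant period and $0\le t<m$, the word $0^{p}(1^{r}0^{2p})^{k-1}1^{r}0^{p}$ (and its $r=1$ and $p=1$ analogues, with a short block placed at the very start and all end gaps trimmed to $\le p$) is a saturated witness of cost $2k$ for $t=0$, which one perturbs according to $t$ --- lengthening a trailing gap, inserting one extra block of length $1$ or $2$, or replacing a length-$2$ block by a length-$r$ block --- tracking how $t$ and the cost move together. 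This yields the upper bounds $\lceil n/(2p+1)\rceil$, $r[n/(r+2)]+\min\{t,r\}$ and $2[n/(2p+r)]+\min\{t,2\}$, matching the three cases. For the matching lower bounds I would argue that in any saturated word each block, taken together with the gap immediately following it, spans at most $\frac{2p+r}{2}$ (resp.\ $\frac{r+2}{r}$, resp.\ $2p+1$) times its own contribution to the degree, the only slack being the two end gaps of total length $\le 2p$, which is precisely the additive correction appearing in the formula.

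I expect the step that costs the most work to be this last matching of the lower bound to the piecewise formula, carried out residue by residue in $t$. In particular one must treat the ``small $n$'' range $n<m$ (where no full $1^{r}$ block fits, so for $p\ge 2,\ r\ge 2$ the minimum degree stays at the constant $2$, realised by a single block padded by at most $p$ zeros on each side), one must show for $p\ge 2,\ r\ge 2$ that moving from residue $t=1$ to $t\ge 2$ forces the cost up from $2k+1$ to exactly $2k+2$, and for $p=1$ one must check that the merge condition $j_1+j_2\ge r$ never permits a packing cheaper than alternating full $1^{r}$ blocks with $0^{2}$ gaps. A structurally cleaner alternative for these lower bounds might be induction on $n$ using the decomposition $V(I\Gamma_n^{(p,r)})=0V(I\Gamma_{n-1}^{(p,r)})\cup\bigcup_{i=1}^{r}1^{i}0^{p}V(I\Gamma_{n-p-i}^{(p,r)})$ of Proposition~\ref{prop2.2}, keeping track of how the degree of a vertex in the $i$-th part relates to the degree, in $I\Gamma_{n-p-i}^{(p,r)}$, of the suffix obtained by deleting its leading $1^{i}0^{p}$.
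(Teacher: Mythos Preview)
Your plan is correct and follows the same skeleton as the paper: reduce via Lemma~\ref{lem5.2}/Corollary~\ref{cor5.3} to ``saturated'' vertices, observe that their degree equals $w(\alpha)$ when $p=1$ and $\sum_B\min\{|B|,2\}$ when $p\ge 2$, and then exhibit the periodic witness built from the unit $0^{p}1^{r}0^{p}$ (the paper writes this recursively as $I\delta_n^{(p,r)}=0^{p}1^{r}0^{p}I\delta_{n-2p-r}^{(p,r)}$ with the same small-$n$ initial segments you describe). The one substantive difference is that the paper simply invokes Corollary~\ref{cor5.3} and counts the degree of this explicit vertex, without a separate lower-bound argument; your density estimate (each block together with its following gap spans at most $\frac{2p+r}{2}$, $\frac{r+2}{r}$, or $2p+1$ units per unit of degree, with the two end-gaps accounting for the additive correction) is a genuine addition that makes the optimality claim rigorous rather than asserted. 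One small slip to fix when you write it up: it is not true that the leftmost and rightmost blocks of a saturated word must have length $r$---a word such as $1\,0^{p}1^{r}0^{p}$ is saturated with a leading block of length $1$---but since you already allow short end-blocks in your perturbation step (``inserting one extra block of length $1$ or $2$''), this does not affect the argument.
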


\begin{proof}
For convenience,
the vertex with the degree of $\delta(I\Gamma_{n}^{(p,r)})$ in Corollary \ref{cor5.3} is denoted as $I\delta_{n}^{(p,r)}$.
To determine $\delta(I\Gamma_{n}^{(p,r)})$,
we only need to count the number of element $1$s in $\delta_{n}^{(p,r)}$ such that can be changed to $0$.

Let $p,r$ and $n\geq1$.
Then $I\delta_{n}^{(p,r)}$ can be constructed as follows based on Corollary \ref{cor5.3}.
For $1\leq n\leq 2p+r$, let
$I\delta_{1}^{(p,r)}=1,$
$\ldots,$
$I\delta_{r}^{(p,r)}=1^{r},$
$I\delta_{r+1}^{(p,r)}=1^{r}0,$
$\ldots,$
$I\delta_{r+p}^{(p,r)}=1^{r}0^{p},$
$I\delta_{r+p+1}^{(p,r)}=01^{r}0^{p},$
$\ldots,$
$I\delta_{r+2p}^{(p,r)}=0^{p}1^{r}0^{p};$
for $n\geq2p+r,$ let
$I\delta_{n}^{(p,r)}=0^{p}1^{r}0^{p}I\delta_{n-2p-r}^{(p,r)}.$

Since the number of element $1$s in $I\delta_{n}^{(p,r)}$ that can be changed to $0$ according to the value of $r$ and $p$,
we distinguish three cases: $r=1$ and $p\geq1$, $r\geq2$ and $p=1$, $r\geq2$ and $p\geq2$.

First we consider the case $r=1$, $p\geq1$ and $n\geq1$.
Obviously if $r=1$, then all the element $1$s in a vertex of $I\Gamma_{n}^{(p,r)}$ can be changed to $0$.
Since the number of $1$s containing in $I\delta_{n}^{(p,1)}$ is $\lceil \frac{n}{2p+1}\rceil$ for all $p\geq1$ and $n\geq1$,
$\delta(I\Gamma_{n}^{(p,1)})=\lceil \frac{n}{2p+1}\rceil$.
So $(a)$ is proven.

Next we consider the case $r\geq2$, $p=1$ and $n\geq1$.
If $p=1$,
then the neighborhood vertices of every the above vertex $I\delta_{n}^{(p,r)}$ can be obtained by changing every $1$ in this vertex.
It can be calculated that the number of the $1$s in $I\delta_{n}^{(p,r)}$ is
$r[\frac{n}{r+2}]+t$ if $0\leq t\leq r-1$, and
$r[\frac{n}{r+2}]+r$ if $t=r,r+1$.
Hence, $(b)$ holds.

Finally, we consider the case $r\geq2$, $p\geq2$ and $n\geq1$.
The neighborhood vertices set $N(I\delta_{n}^{(p,r)})$ of every the above vertex $I\delta_{n}^{(p,r)}$ can
be obtained by changing the coordinates of $I\delta_{n}^{(p,r)}$ that at the beginning and end of each blocks consisting of $1$s.
Therefore,
the degree of $I\delta_{n}^{(p,r)}$ is shown as in $(c)$.
This completes the proof.
\end{proof}

Recall that if $p=1,r=1$ and $n\geq1$,
then $O\Gamma_{n}^{(p,r)}\cong I\Gamma_{n}^{(p,r)}\cong \Gamma_{n}$ by Proposition \ref{prop2.3}.
It was shown that $\delta(\Gamma_{n})=\lfloor\frac{n+2}{3}\rfloor$ \cite{AKLR},
and this result is contained in Theorem \ref{thm5.4} $(a)$ and $(b)$.

Now we turn to the cube $O\Gamma_{n}^{(p,r)}$.
If $r=1$ or $p=1$,
then $O\Gamma_{n}^{(p,r)}\cong I\Gamma_{n}^{(p,r)}$ by Proposition \ref{prop2.3}.
Therefore, $\delta(O\Gamma_{n}^{(p,r)})=\delta(I\Gamma_{n}^{(p,r)})$ for the case $r=1$ or $p=1$.
Since Theorem \ref{thm5.4} $(a)$ and $(b)$ determine $\delta(I\Gamma_{n}^{(p,r)})$ for the cases $r=1$ and $p=1$ respectively,
we only determine $\delta(O\Gamma_{n}^{(p,r)})$ for the case $r\geq2$ and $p\geq2$.
For convenience,
the vertex with the degree of $\delta(O\Gamma_{n}^{(p,r)})$ in Corollary \ref{cor5.3} is denoted as $O\delta_{n}^{(p,r)}$.
By Corollary \ref{cor5.3},
a vertex with the degree of $\delta(O\Gamma_{n}^{(p,r)})$ can be found as follows.

\begin{lem}\label{lem5.5}
Let $\alpha$ be a vertex of $O\Gamma_{n}^{(p,r)}$,
where $r\geq2$ and $p\geq2$.
Then there are two different form of vertex $O\delta_{n}^{(p,r)}$ can be chosen:

$(i)$ $O\delta_{n}^{(p,r)}=\ldots0^{p-1}10^{2p-1}(10^{p-1})^{r-2}10^{2p-1}10^{p-1}\ldots$, and

$(ii)$ $O\delta_{n}^{(p,r)}=\ldots0^{p-1}10^{2p-2}(10^{2p-2})^{r-2}10^{2p-2}10^{p-1}\ldots$.

\noindent
Specifically, if $p=3$ and $r=3$, or $p=2$ and $r=4$, then both $(i)$ and $(ii)$ can be chosen;
if $p=2$ and $r\geq5$, or $p=3$ and $r\geq4$, or $p\geq4$ and $r\geq3$, then $(ii)$ can be chosen;
if $p=2$ and $2\leq r\leq3$, or $p\geq3$ and $r=2$, then $(i)$ can be chosen.
\end{lem}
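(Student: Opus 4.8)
The plan is to use Corollary \ref{cor5.3} as the organizing principle: a vertex realizing $\delta(O\Gamma_{n}^{(p,r)})$ should simultaneously minimize the number of $1$s that may legally be flipped to $0$ and maximize the number of $0$-coordinates that are \emph{frozen} (i.e.\ cannot be flipped to $1$ without violating the defining conditions of $O\Gamma_{n}^{(p,r)}$). Recall that in $O\Gamma_{n}^{(p,r)}$ a coordinate equal to $0$ is frozen precisely when flipping it would either create two $1$s at distance less than $p$, or would create an $(r+1)$st `consecutive' $1$ (in the $(10^{p-1})^{r}$ sense). So first I would set up, for a local pattern of the form $0^{a}10^{b}10^{c}\cdots$, exactly how many of the interior $0$s are frozen as a function of the gap lengths $b,c,\dots$ between consecutive $1$s and of how many $1$s already sit in the same `consecutive' block; this is a direct, if slightly tedious, bookkeeping step using Definition \ref{defn1}.

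Next I would argue that, inside any maximal stretch of $r$ `consecutive' $1$s of the optimal vertex, the two \emph{end} $1$s of the stretch are the ones that can be flipped to $0$ (flipping an interior $1$ of a full block of $r$ is always allowed too, so one wants the gaps around interior $1$s to be as small as legally possible, namely $p-1$, so that flipping them frees the fewest further coordinates), while the gap on each side \emph{between} two such maximal stretches must be large enough to prevent the two flanking $1$s from being flippable-to-$0$-for-free and simultaneously to freeze as many $0$s as possible. A short case analysis on this gap length shows the only two locally optimal possibilities: either the separating gap has length $2p-1$ with interior gaps $p-1$ — pattern $(i)$ — or all gaps inside the repeated unit are taken equal to $2p-2$ — pattern $(ii)$. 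Concretely I would write $O\delta_{n}^{(p,r)}$ via the recursive construction analogous to the one for $I\delta_{n}^{(p,r)}$ in Theorem \ref{thm5.4}, peeling off one block $0^{p-1}10^{2p-1}(10^{p-1})^{r-2}10^{2p-1}$ (resp.\ the pattern-$(ii)$ analogue) at a time, and then verify by the frozen-coordinate count from the first step that this vertex attains the minimum.

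The remaining — and most delicate — step is the parameter comparison: for given $(p,r)$ one must decide which of $(i)$ and $(ii)$ yields the strictly smaller degree, or whether they tie. This is where the listed boundary cases come from. I would compute, for one fundamental period of each pattern, the quantity
\[
\#\{1\text{s flippable to }0\}\;-\;\#\{0\text{s frozen}\}
\]
as an explicit affine function of $p$ and $r$ (linear in $r$ for fixed $p$), then compare the two functions. The crossover happens exactly at the small values $(p,r)\in\{(3,3),(2,4)\}$ where the two affine expressions coincide — hence ``both $(i)$ and $(ii)$ can be chosen'' — with $(ii)$ winning for the larger-parameter regimes ($p=2,\ r\ge 5$; $p=3,\ r\ge 4$; $p\ge 4,\ r\ge 3$) and $(i)$ winning for the small-parameter regimes ($p=2,\ 2\le r\le 3$; $p\ge 3,\ r=2$). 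The main obstacle is not any single inequality but making the frozen-coordinate count in the first step airtight for \emph{every} relative position of the pattern within the word (first block, last block, interior), since the endpoints of the word behave differently; by Proposition \ref{prop2.6} the first/last cases are mirror images of each other, which cuts the work roughly in half, and the interior case is the generic one handled by the period computation. Once the per-period arithmetic is pinned down, the classification into the three regimes (and the two tie cases) is immediate.
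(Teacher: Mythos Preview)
Your first two paragraphs --- deriving via a local case analysis on gap lengths that $(i)$ and $(ii)$ are the only locally optimal patterns --- are more thorough than the paper, which simply asserts this ``by Corollary~\ref{cor5.3}'' as ``easy to see''. On that part you are doing more work than the paper, not less, and the approach is sound.

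Where your plan diverges unnecessarily is the comparison step. The simplification you miss is that in $O\Gamma_{n}^{(p,r)}$ \emph{every} $1$ can be flipped to $0$: changing a $1$ to $0$ can only lengthen gaps and shorten `consecutive' runs, so neither forbidden factor in Proposition~\ref{prop2.5} can be created. Hence for any vertex satisfying Corollary~\ref{cor5.3} the degree equals its number of $1$s. Both patterns $(i)$ and $(ii)$, over one fundamental period, contain exactly $r+1$ ones, so the paper reduces the comparison to a single question: which period is \emph{longer}? (The longer period spreads the same $r+1$ degree-contributing $1$s over more coordinates, hence gives smaller $\delta$.) The period lengths are $(r+4)p-1$ for $(i)$ and $(2p-1)(r+1)$ for $(ii)$; equating them gives $r(p-1)=2p$, whose only integer solutions with $p,r\ge 2$ are $(p,r)=(2,4)$ and $(3,3)$, and the sign of $r(p-1)-2p$ immediately yields the three regimes.

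Your proposed comparison quantity $\#\{1\text{s flippable to }0\}-\#\{0\text{s frozen}\}$ is not the degree, and minimizing it across two patterns is only equivalent to minimizing degree-per-unit-length when the two patterns have the same $1$-count per period. That happens to hold here (both have $r+1$), so your computation would land on the right answer; but you should state that equality explicitly. Once you do, your affine comparison collapses to the paper's length comparison, and the first/last/interior endpoint casework you anticipate becomes irrelevant for the classification step.
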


\begin{proof}
By Corollary \ref{cor5.3},
It is easy to see that the form of the vertex with the degree of $\delta(O\Gamma_{n}^{(p,r)})$ may be $(i)$ and $(ii)$.
But which form can be chosen depend on the different value of $p,r$ and $n$,
and this can be determined by considering the length of $(i)$ and $(ii)$ such that containing the same element $1$s.
Suppose that both $(i)$ and $(ii)$ contain $r+1$ element $1$s,
that is the degree of both $(i)$ and $(ii)$ are $r+1$.
Then the length of $(i)$ and $(ii)$ are $(r+4)p-1$ and $(2p-1)(r+1)$ respectively.
It is clear that when $p=3$ and $r=3$, or $p=2$ and $r=4$,
$(i)$ and $(ii)$ have the same length;
when $p=2$ and $r\geq5$, or $p=3$ and $r\geq4$, or $p\geq4$ and $r\geq3$, the length of $(ii)$ is bigger than $(i)$;
and when $p=2$ and $2\leq r\leq3$, or $p\geq3$ and $r=2$, the length of $(i)$ is bigger than $(ii)$.
Since the vertices $(i)$ and $(ii)$ have the same degree and the degree of $\delta(O\Gamma_{n}^{(p,r)})$ is considered,
the vertex of the longer length is needed.
The proof is completed.
\end{proof}

\begin{thm}\label{thm5.6}
Let $p\geq2,r\geq2$ and $n\geq1$.
Then $\delta(O\Gamma_{n}^{(p,r)})$ can be determined as follows:

$(a)$ for $r=2$ and $p\geq2$,
\begin{equation*}
\delta(O\Gamma_{n}^{(p,r)})=
\begin{cases}
1,~if~n\leq 2p-1,\\
[\frac{n}{2p}]+1,~if~n\geq 2p;
\end{cases}\\
\end{equation*}

$(b)$ for $r=3, p=2$ and $t=\mod(n,13)$

\begin{equation*}
\delta(O\Gamma_{n}^{(p,r)})=
\begin{cases}
4[\frac{n}{13}],~if~t=0,\\
4[\frac{n}{13}]+k,~if~3k-2\leq t\leq 3k, 1\leq k\leq 4;~and
\end{cases}\\
\end{equation*}

$(c)$ for $r\geq4,p=2$ or $r\geq3, p\geq3$,
$\delta(O\Gamma_{n}^{(p,r)})=\lceil \frac{n}{2p-1}\rceil$.
\end{thm}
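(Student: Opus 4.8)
The strategy is to cash in what Corollary~\ref{cor5.3} and Lemma~\ref{lem5.5} already give us, namely a concrete minimum--degree vertex $O\delta_{n}^{(p,r)}$ together with its exact shape, which is form $(i)$ or form $(ii)$ according to the sizes of $p$ and $r$ recorded in Lemma~\ref{lem5.5}. So the theorem reduces to computing the degree of this one vertex. The basic tool, peculiar to the $O$--cubes, is that changing any coordinate equal to $1$ of a vertex of $O\Gamma_{n}^{(p,r)}$ into $0$ always produces a vertex: deleting a $1$ only enlarges the gaps between the remaining $1$s and lowers their number, so neither a forbidden factor $10^{s}1$ with $s\leq p-2$ nor the factor $(10^{p-1})^{r}1$ can be created. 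Hence for every vertex $\alpha$ of $O\Gamma_{n}^{(p,r)}$ one has $d_{O\Gamma_{n}^{(p,r)}}(\alpha)=w(\alpha)+N_{0}(\alpha)$, where $N_{0}(\alpha)$ counts the coordinates of $\alpha$ equal to $0$ whose switch to $1$ stays inside $O\Gamma_{n}^{(p,r)}$.

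First I would prove that $N_{0}(O\delta_{n}^{(p,r)})=0$, so that $\delta(O\Gamma_{n}^{(p,r)})=w(O\delta_{n}^{(p,r)})$. A coordinate $0$ lying in a block $0^{g}$ flanked on both sides by $1$s can be turned into a $1$ only when $g\geq 2p-1$, and even then only at a position leaving at least $p-1$ zeros on each side, for otherwise a factor $10^{s}1$ with $s\leq p-2$ appears. In form $(ii)$ every internal block of $0$s has length $2p-2<2p-1$, so all such coordinates are already blocked; in form $(i)$ the only internal blocks reaching length $2p-1$ are the blocks $0^{2p-1}$, and there the unique gap--admissible flip lines up $r+1$ ones at mutual distance $p-1$, i.e. creates the forbidden factor $(10^{p-1})^{r}1$. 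The coordinates $0$ in the two extremal blocks $0^{p-1}$ are blocked in both forms, since a new $1$ there lies within distance $<p-1$ of the nearest $1$. (For the small values of $n$ for which $O\delta_{n}^{(p,r)}$ contains no complete period, the same gap count shows directly that a single $1$, or none when $n$ falls below the relevant threshold, already blocks every $0$.)

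It then remains to count the $1$s of $O\delta_{n}^{(p,r)}$, split along the three regimes of Lemma~\ref{lem5.5}. For $r=2$, $p\geq2$ the vertex is of form $(i)$, whose bulk is the length-$2p$ pattern $10^{2p-1}$ and whose extremal blocks are trimmed to $0^{p-1}$; for $n\leq 2p-1$ a single suitably placed $1$ works, giving $\delta=1$, while for $n\geq 2p$, fitting as many copies of $10^{2p-1}$ as possible and then accounting for the residue together with the two $0^{p-1}$ boundary blocks yields $[\tfrac{n}{2p}]+1$, which is $(a)$. For $r=3$, $p=2$ the vertex is again of form $(i)$, now periodic with period $13$ carrying $4$ ones placed inside one period at consecutive gaps $3,1,3$; writing $n=13\,[\tfrac{n}{13}]+t$, the residue adds $k$ further $1$s precisely when $3k-2\leq t\leq 3k$ (and none when $t=0$), giving $(b)$. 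For the remaining cases ($r\geq4$, $p=2$, or $r\geq3$, $p\geq3$) one may take $O\delta_{n}^{(p,r)}$ of form $(ii)$, which is $(10^{2p-2})^{*}$ with the extremal blocks trimmed to $0^{p-1}$, i.e. one $1$ per $2p-1$ coordinates throughout, whence $w(O\delta_{n}^{(p,r)})=\lceil\tfrac{n}{2p-1}\rceil$, which is $(c)$; the grouping indicated by the exponent $r-2$ in the displayed form of $(ii)$ is irrelevant to the count but makes it transparent that $O\delta_{n}^{(p,r)}\in V(O\Gamma_{n}^{(p,r)})$, since all its gaps equal $2p-2\geq p$ so that $(10^{p-1})^{r}1$ never occurs.

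I expect the main obstacle to be the last, purely arithmetic step: matching the number of $1$s in the boundary--trimmed, possibly incomplete--last--period word $O\delta_{n}^{(p,r)}$ to the stated floor/ceiling expressions for every $n$ --- in particular pinning down the boundary ``$+1$'' in $(a)$, the exact residue grouping in $(b)$, and the degenerate small-$n$ values. A secondary technical point is the block-by-block check that the specific gap lengths $p-1$, $2p-2$ and $2p-1$ occurring in forms $(i)$ and $(ii)$ really do leave every $0$ unchangeable, so that $d_{O\Gamma_{n}^{(p,r)}}(O\delta_{n}^{(p,r)})$ equals $w(O\delta_{n}^{(p,r)})$ exactly, rather than being strictly larger.
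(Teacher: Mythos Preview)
Your plan is essentially the same as the paper's own proof: both rely on Corollary~\ref{cor5.3} and Lemma~\ref{lem5.5} to pin down the minimum-degree vertex $O\delta_{n}^{(p,r)}$ in one of the two forms, and then reduce $\delta(O\Gamma_{n}^{(p,r)})$ to counting the $1$s in that vertex, case by case. The paper carries out the count by writing down $O\delta_{n}^{(p,r)}$ explicitly for small $n$ in each regime and then giving a length-$2p$ (case $(a)$), length-$13$ (case $(b)$), or length-$(2p-1)$ (case $(c)$) recursion, which is exactly the periodic structure you describe.

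The one place where you are actually a bit more careful than the paper is in isolating the identity $d_{O\Gamma_{n}^{(p,r)}}(\alpha)=w(\alpha)+N_{0}(\alpha)$ and arguing separately that (1) every $1\to 0$ flip stays in $O\Gamma_{n}^{(p,r)}$, and (2) $N_{0}(O\delta_{n}^{(p,r)})=0$ via the block-length analysis of gaps $p-1$, $2p-2$, $2p-1$. The paper treats (1) as obvious and gets (2) for free from Lemma~\ref{lem5.2}, simply equating the degree of $O\delta_{n}^{(p,r)}$ with its weight. Your explicit check that the unique gap-admissible flip in a $0^{2p-1}$ block of form $(i)$ manufactures $(10^{p-1})^{r}1$ is correct and is the only nontrivial point in that verification. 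The arithmetic obstacles you flag (the ``$+1$'' in $(a)$, the residue buckets in $(b)$, the small-$n$ degeneracies) are precisely what the paper handles by listing the initial vertices $O\delta_{1}^{(p,r)},\ldots$ before invoking the recursion, so your anticipation is accurate.
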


\begin{proof}
It is easy to see that the cases $p=2,2\leq r\leq3$ and $p\geq3,r\leq2$ in Lemma \ref{lem5.5} can be recombined into the cases
$p=2,r=3$ and $p\geq2,r=2$.
For the two latter cases,
it is shown in Lemma \ref{lem5.5} that the form of the vertex with the degree of $\delta(O\Gamma_{n}^{(p,r)})$ is $(i)$,
but for $r=2$ and $r=3$,
there are different forms.
For the case $p\geq2$ and $r=2$,
if $n\leq 2p-1$, then let
$O\delta_{1}^{(p,r)}=1,$
$O\delta_{2}^{(p,r)}=01,$
$\ldots$
$O\delta_{p}^{(p,r)}=0^{p-1}1,$
$O\delta_{p+1}^{(p,r)}=0^{p-1}10,$
$\ldots$
$O\delta_{2p-1}^{(p,r)}=0^{p-1}10^{p-1};$
if $2p\leq n \leq4p-1$,
then let
$O\delta_{2p}^{(p,r)}=0^{p-1}10^{p-1}1,$
$O\delta_{2p+1}^{(p,r)}=0^{p-1}10^{p-1}01,$
$\ldots$
$O\delta_{3p-1}^{(p,r)}=0^{p-1}10^{p-1}0^{p-1}1,$
$O\delta_{3p}^{(p,r)}=0^{p-1}10^{p-1}0^{p}1,$
$O\delta_{3p+1}^{(p,r)}=0^{p-1}10^{p-1}0^{p}10,$
$\ldots$
$O\delta_{4p-1}^{(p,r)}=0^{p-1}10^{p-1}0^{p}10^{p-1}$;
if $n\geq 4p$,
then let
$O\delta_{n}^{(p,r)}=0^{p-1}10^{p}\delta_{n-2p}^{(p,r)}.$
The number of $1$s contained in $O\delta_{n}^{(p,r)}$ can be calculate as follows.
If $n\leq2p-1$, then $\delta(O\Gamma_{n}^{(p,r)})=1$;
if $n\geq2p$, then $\delta(O\Gamma_{n}^{(p,r)})=[\frac{n}{2p}]+1$.
So $(a)$ is proved.

For the case $p=2$ and $r=3$,
if $n\leq13$,
then let
$O\delta_{3k+1}^{(p,r)}=(010)^{k}1,$
$O\delta_{3k+2}^{(p,r)}=(010)^{k}10,$
$O\delta_{3k+3}^{(p,r)}=(010)^{k}010,$ where $0\leq k\leq4$,
if $n=13$,
then let
$O\delta_{13}^{(p,r)}=0100010100010$,
and if for $n>13$,
then let
$O\delta_{n}^{(p,r)}=O\delta_{13}^{(p,r)}O\delta_{n-13}^{(p,r)}$.
Let $n=13s+t$ for some $s\geq0$ and $0\leq t\leq12$.
Then $O\delta_{n}^{(p,r)}=(O\delta_{13}^{(p,r)})^{s}O\delta_{t}^{(p,r)}$,
and the number of element $1$s contained in $O\delta_{n}^{(p,r)}$ is as shown in $(b)$.

The cases $p=2,r=4$ and $p=2,r\geq5$ can be combined into one case $p=2, r\geq4$;
the cases $p=3,r=3$, $p=3,r\geq4$ and $p\geq4,r\geq3$ can be combined into one case $p\geq3,r\geq3$.
For both the cases $p=2, r\geq4$ and $p\geq3,r\geq3$,
we can find the vertex with the degree of $\delta(O\Gamma_{n}^{(p,r)})$ with the form $(ii)$ by Lemma \ref{lem5.5},
as shown in the following:
if $n<2p-1$,
then let
$O\delta_{1}^{(p,r)}=1,$
$O\delta_{2}^{(p,r)}=10,\ldots,$
$O\delta_{p}^{(p,r)}=10^{p-1},$
$O\delta_{p+1}^{(p,r)}=010^{p-1},\ldots,$
$O\delta_{2p-1}^{(p,r)}=0^{p-1}10^{p-1};$
if $n\geq2p-1$, then let
$O\delta_{n}^{(p,r)}=0^{p-1}10^{p-1}\delta_{n-2p+1}^{(p,r)}.$
It is clear that the degree of $O\delta_{n}^{(p,r)}$ is $\lceil \frac{n}{2p-1}\rceil$.
So $(c)$ holds.
This completes the proof.
\end{proof}

\smallskip
\section{Concluding remarks}
In this section,
some questions are posed for further investigation.

\begin{pro}
Determine the diameter of $I\Gamma_{n}^{(p,r)}$ for $p\geq2$, $r\geq2p+3$ and $n\geq 2p+3$.
\end{pro}

For the case in the above problem,
only a bound for $diam(I\Gamma_{n}^{(p,r)})$ is given in Theorem \ref{thm4.5},
and it is shown that $diam(I\Gamma_{n}^{(p,r)})$ is bigger than $n$ for some cases.
We think that the fundamental reason for this is that there exist the so called `distance-barrier' in some vertices of
$I\Gamma_{n}^{(p,r)}$.

\begin{pro}\label{pro2}
Determine the radius and center of $I\Gamma_{n}^{(p,r)}$ for $p\geq2$ and $r\geq2$.
\end{pro}

On this problem,
we can prove that $rad(I\Gamma_{n}^{(p,r)})=r\lceil \frac{n}{p+r}\rceil$,
and $0^{n}$ is a center of $I\Gamma_{n}^{(p,r)}$ for $p\geq1,r\geq1$ and $mod(n,p+r)=0$.
For the other case, we think it is a more difficult problem.

In Section 4,
the minimum and maximum degree of $I\Gamma_{n}^{(p,r)}$ and $O\Gamma_{n}^{(p,r)}$ are determined,
respectively.
More generality, we have the following problem.

\begin{pro}
Determine the degree sequence of $I\Gamma_{n}^{(p,r)}$ ($O\Gamma_{n}^{(p,r)}$) for $p\geq1,r\geq1$ and $n\geq1$.
\end{pro}

Connectivity of some Fibonacci like-cubes was studied \cite{AKLR}.
We have the following conjecture.

\begin{con}
The connectivity of $I\Gamma_{n}^{(p,r)}$ and $O\Gamma_{n}^{(p,r)}$ equal to their minimum degree.
\end{con}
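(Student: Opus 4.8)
Throughout write $G$ for either $I\Gamma_n^{(p,r)}$ or $O\Gamma_n^{(p,r)}$. The bound $\kappa(G)\le\delta(G)$ holds for every graph, so the whole task is to prove $\kappa(G)\ge\delta(G)$. By Proposition \ref{prop2.3}, when $p=1$ or $r=1$ the graph $G$ is a hypercube, a classical Fibonacci cube, or a postal network, and for all of these the equality $\kappa=\delta$ is known (see \cite{AKLR}); so I would assume $p\ge2$ and $r\ge2$. The plan is then an induction on $n$ built on the recursive decompositions of Eqs. (\ref{2.3}) and (\ref{2.5}), with the small values of $n$ (where $G$ is a hypercube by Proposition \ref{prop2.3} or can be checked directly) as base cases.

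For the inductive step on $I\Gamma_n^{(p,r)}$, decompose $I\Gamma_n^{(p,r)}=H_0\cup H_1\cup\cdots\cup H_r$ as in Eq. (\ref{2.5}): $H_0\cong I\Gamma_{n-1}^{(p,r)}$ is the part with prefix $0$, and $H_t\cong I\Gamma_{n-p-t}^{(p,r)}$ is the part with prefix $1^t0^p$ for $1\le t\le r$. A short analysis of when two words from different blocks differ in a single coordinate (the computation behind Proposition \ref{prop2.9}) shows that $H_0$ is joined to every $H_t$, that the blocks $H_1-H_2-\cdots-H_r$ are joined consecutively in a path-like pattern, that there are no other inter-block edges, and that in each case the linking edges have pairwise distinct endpoints in the smaller of the two blocks. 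Now let $S\subseteq V(I\Gamma_n^{(p,r)})$ with $|S|\le\delta(I\Gamma_n^{(p,r)})-1$; the goal is to show $I\Gamma_n^{(p,r)}-S$ is connected. By the induction hypothesis $\kappa(H_0)=\delta(H_0)$, and one argues that $H_0-S$ has a component $C_0$ containing all but a controlled number of vertices of $H_0$ (if $|S\cap V(H_0)|<\delta(H_0)$ it is all of $H_0-S$; otherwise one uses the explicit structure of the low-degree vertices of $H_0$). Next, for each $t\ge1$ one shows that every surviving vertex of $H_t$ is connected to $C_0$: either $H_t-S$ is connected and at least one of its $|V(H_t)|$ links to $H_0$ survives $S$ and reaches $C_0$ (which holds once $H_t$ is large enough; the few very small blocks are attached vertex by vertex), or $H_t-S$ breaks into pieces and each piece is patched in through its remaining external edges to $H_0$, $H_{t-1}$ or $H_{t+1}$ — using at the end that every vertex has degree at least $\delta(I\Gamma_n^{(p,r)})>|S|$, so that no vertex can be cut off entirely. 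Equivalently, this can be packaged as a Menger-type construction of $\delta(I\Gamma_n^{(p,r)})$ internally disjoint paths between any two vertices, combining the paths guaranteed inside a block by the induction hypothesis with detours through $H_0$ along the external edges.

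The same scheme handles $O\Gamma_n^{(p,r)}$ via the decomposition Eq. (\ref{2.3}) into the $r+1$ blocks $O\Gamma_{n-tp-1}^{(p,r)}$; here every pair of blocks carries linking edges (Proposition \ref{prop2.8}), which only makes the patching easier. It also seems worthwhile to first prove the easier edge-connectivity statement $\lambda(G)=\delta(G)$ by the same induction, both as a warm-up and because it can be used to rule out several configurations of a minimum vertex cut.

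I expect two things to be the main obstacles. First, $\delta$ of the smaller cubes appearing in the recursion is strictly smaller than $\delta(G)$ and does not vary in a simple monotone way — Theorems \ref{thm5.4} and \ref{thm5.6} have many cases, and the minimum-degree vertices $I\delta_n^{(p,r)}$ and $O\delta_n^{(p,r)}$ produced in Section 4 are spread-out words sitting deep inside nested blocks rather than the vertex $0^n$ — so the naive step ``each block is at least as connected as the whole'' fails, and the argument must genuinely exploit the multiplicities of the inter-block edges together with the explicit form of these extremal vertices. Second, for $p\ge2$ and $r\ge p+2$ the cube $I\Gamma_n^{(p,r)}$ is not an isometric subgraph of $Q_n$ (this is exactly where the distance-barriers of Section 3 appear), so one cannot route disjoint paths along Hamming geodesics and the paths must be built directly from the recursive structure. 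Making the block-patching argument uniform over all the cases of $\delta$ is where I would expect most of the work to lie.
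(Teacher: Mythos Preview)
The statement you are addressing is a \emph{conjecture} in the paper, listed in the concluding remarks among open problems; the paper offers no proof of it whatsoever. So there is nothing to compare your proposal against --- the authors explicitly leave this unresolved.

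Your write-up is accordingly not a proof but an outline of a strategy, and you say as much: phrases like ``one argues that'', ``one shows that'', ``I expect two things to be the main obstacles'', and ``where I would expect most of the work to lie'' make clear that the key steps are not actually carried out. That is an honest assessment of the situation. The recursive-decomposition-plus-induction scheme you sketch is the natural first thing to try and mirrors how \cite{AKLR} handles the classical Fibonacci and Lucas cubes, but the two obstacles you name are real. In particular, the inductive step is not straightforward because the blocks $H_t$ for $t\ge 1$ can have minimum degree (hence, inductively, connectivity) strictly smaller than $\delta(G)-|S\cap V(H_0)|$, so a cut $S$ concentrated in a single small block can disconnect that block without contradicting the induction hypothesis; repairing this requires a careful count of surviving inter-block edges against the explicit formulas in Theorems \ref{thm5.4} and \ref{thm5.6}, and that bookkeeping is exactly what is missing.

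One small correction to your base case: Proposition \ref{prop2.3} does \emph{not} say that $G$ is a hypercube, Fibonacci cube, or postal network whenever $p=1$ or $r=1$. For $p=1$ and $2\le r<n$ the graph is the subgraph of $Q_n$ forbidding the factor $1^{r+1}$, and for $r=1$ and $p\ge 2$ it is the postal network $PN_{p+1}(n)$. The reference \cite{AKLR} treats Fibonacci, Lucas, and certain generalized cubes, but you would need to verify separately that it (or some other source) actually establishes $\kappa=\delta$ for all of these families before invoking them as known.
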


It was determined the cubes $I\Gamma_{n}^{(p,r)}$ which are the $Z$-transformation graphs and product graphs in \cite{OZY} and \cite{KGM}, respectively.
We have the following problems.

\begin{pro}
Determine all $O\Gamma_{n}^{(p,r)}$ which can be the
$Z$-transformation graphs of perfect matchings of plane (bipartite) graphs.
\end{pro}

\begin{pro}
Determine $O\Gamma_{n}^{(p,r)}$ which are product graphs.
\end{pro}

\smallskip
\section{Acknowledgment}
This research was funded by projects ZR2018MA010 and ZR2019YQ02 by Shandong Provincial Natural Science Foundation, China.

\bigskip\bigskip\bigskip\bigskip

\end{document}